\documentclass[11pt]{amsart}

\usepackage{amssymb,graphicx, amsmath, amsthm, enumitem,MnSymbol, array,mathalpha}
\usepackage{calrsfs}
\usepackage{wrapfig}
\usepackage{graphicx}
\graphicspath{ {./images/} }

\DeclareMathAlphabet\mathbfcal{OMS}{cmsy}{b}{n}

\usepackage{todonotes}

\usepackage{floatflt}

\usepackage{hyperref}

\usepackage{mathbbol}

 \usepackage[mathscr]{eucal}

\usepackage[font=small]{caption}

\setlength{\topmargin}{0in}
\setlength{\textheight}{8.5in}
\setlength{\oddsidemargin}{.25in}
\setlength{\evensidemargin}{.25in}
\setlength{\textwidth}{6in}

%\setlength{\topmargin}{.5in}
%\setlength{\textheight}{7.75in}
%\setlength{\oddsidemargin}{.5in}
%\setlength{\evensidemargin}{.5in}
%\setlength{\textwidth}{5.5in}
%\renewcommand{\baselinestretch}{1.1}

 %%% m

    %%% p
    %%% p

 %%% N
 %%% Q
 %%% Z
 %%% F
\def\k{\mathbb R} %%% R
 %%% C

\def\k{{\kappa}}

\def\PP{f}

\def\PP{\overline{\PP}}

\def\P{{\mathbb{P}}}
\def\mid{{\rm mid}}

\def\B{\mathbb B}
%\def\QQ{\overline{\PP}}

 %%% spec
\begin{document}

\newtheorem{theorem}{Theorem}[section]
\newtheorem{lemma}[theorem]{Lemma}
\newtheorem{proposition}[theorem]{Proposition}
\newtheorem{corollary}[theorem]{Corollary}
\newtheorem{problem}[theorem]{Problem}
\newtheorem{construction}[theorem]{Construction}

\theoremstyle{definition}
\newtheorem{defi}[theorem]{Definitions}
\newtheorem{definition}[theorem]{Definition}
\newtheorem{notation}[theorem]{Notation}
\newtheorem{remark}[theorem]{Remark}
\newtheorem{example}[theorem]{Example}
\newtheorem{question}[theorem]{Question}
\newtheorem{comment}[theorem]{Comment}
\newtheorem{comments}[theorem]{Comments}

\newtheorem{discussion}[theorem]{Discussion}

\renewcommand{\thedefi}{}

\long\def\alert#1{\smallskip{\hskip\parindent\vrule%
\vbox{\advance\hsize-2\parindent\hrule\smallskip\parindent.4\parindent%
\narrower\noindent#1\smallskip\hrule}\vrule\hfill}\smallskip}

\def\red{{{\rm red}}}
\def\ff{\frak}
\def\tf{torsion-free}
\def\Spec{\mbox{\rm Spec }}
\def\Proj{\mbox{\rm Proj }}
\def\hgt{\mbox{\rm ht }}
\def\type{\mbox{ type}}
\def\Hom{\mbox{ Hom}}
\def\rank{\mbox{rank}}
\def\Ext{\mbox{ Ext}}
\def\Tor{\mbox{ Tor}}
\def\ker{\mbox{ Ker }}
\def\Max{\mbox{\rm Max}}
\def\End{\mbox{\rm End}}
\def\xpd{\mbox{\rm xpd}}
\def\Ass{\mbox{\rm Ass}}
\def\emdim{\mbox{\rm emdim}}
\def\epd{\mbox{\rm epd}}
\def\repd{\mbox{\rm rpd}}
\def\ord{\mbox{\rm ord}}

\def\Bis{\mbox{\rm Bis}}

\def\htt{\mbox{\rm ht}}

\def\DD{{\mathcal D}}
\def\EE{{\mathcal E}}
\def\FF{{\mathcal F}}
\def\GG{{\mathcal G}}
\def\HH{{\mathcal H}}
\def\II{{\mathcal I}}
\def\LL{{\mathcal L}}
\def\MM{{\mathcal M}}
\def\PP{{\mathsf{P}}}

\def\k{\mathbb{k}}

\title{Bisector fields and projective duality}

\author{Bruce Olberding} 
\address{Department of Mathematical Sciences, New Mexico State University, Las Cruces, NM 88003-8001}
%    Current address

\email{bruce@nmsu.edu}

\author{Elaine A.~Walker}
\address{Las Cruces, NM}
%    Current address

\email{miselaineeous@yahoo.com}

\begin{abstract}   
Working  over a field $\k$ of characteristic $\ne 2$, we study what we call bisector fields, which are    arrangements  of paired lines in the  plane that have the property that each line in the arrangement crosses the paired lines in pairs of points that all share the same midpoint.  To do so, we use  tools from the theory of algebraic curves and projective duality. We obtain a complete classification  if $\k$ is real closed or algebraically closed, and we obtain a partial classification if $\k$ is a finite field. A classification for other fields remains an open question. Ultimately this is a question regarding affine equivalence within a  system of certain rational quartic  curves. 
 \end{abstract}

\subjclass{Primary 52C30, 14N20, 14H45}

\thanks{\today}
%\date{August 10, 2019}

\maketitle

\section{Introduction} 

Throughout this article, $\k$ denotes a field whose characteristic is not $2$. 
 We work in the affine plane over $\k$ and 
  study   arrangements of paired lines    like those in Figures~1 and~2, 
   where each line   in the arrangement  crosses the paired lines in pairs of points  
%for which each such pair of points on $\ell$ 
that all share the same midpoint.  
We call a line with this property a  {\it bisector} of the arrangement and we call the distinguished midpoint on the line, the midpoint shared by all pairs, the {\it midpoint} of the bisector.  
%(See Definition~\ref{bisector def} for a more rigorous definition.)  
Although  not obvious, if $\k$ is infinite and the arrangement finite, 
% \begin{figure}[h] \label{diagonalspar}
 %\begin{center}
 %%%%%%%%%\includegraphics[width=0.65\textwidth,scale=.09]{Figure 1_.png} 
 %%%%%%%%%\includegraphics[scale=.09]{TwoPairsConeLinesAnti.jpg}
 %\end{center}
% \caption{An arrangement of pairs of lines such that each line  is a bisector of the arrangement. Only the   bisector $A$ is labeled with its crossing points (the black point is the midpoint of this bisector) but every other line in this arrangement is similarly a bisector of the other lines.  See Definition~\ref{bisector def} for a formal definition.
  %}
%\end{figure}
 then infinitely many more pairs of bisectors can be added to this arrangement while preserving the symmetry that 
%each line in the arrangement bisects each pair with the same midpoint; i.e., 
the paired lines   are both bisectors of, {\it and bisected by}, the other pairs of lines in the  arrangement.
We call an arrangement of paired lines that has this self-bisection property and     cannot be further enlarged  
  a {\it bisector field}. See Section~2 for a formal definition. In \cite[Theorems 4.5~and~6.3]{OWPencil} it is shown that every bisector field is the set of bisectors of a quadrilateral and  is the set of line pairs that occur as degenerations of the pencil of conics through the vertices of this quadrilateral. Further properties of bisector fields of quadrilaterals, some of which we recall in Section~2, are studied in \cite{OWQuad}.

%  \begin{figure}[h] \label{diagonalspar}
 %\begin{center}
 %%%%%%%%%\includegraphics[width=0.65\textwidth,scale=.09]{Figure 2_.png} 
 %%%%%%%%%\includegraphics[scale=.09]{TwoPairsConeLinesAnti.jpg}
 %\end{center}
 %\caption{This arrangement of paired lines extends that of Figure 1 and preserves the property that every line shown  is a bisector of the arrangement. 
 % }
%\end{figure}

  Our goal in this article is to further describe the nature of bisector fields, including how they arise and their classification up to affine equivalence.
  The methods we use include classical  point/line duality of the projective plane and the  theory of algebraic curves over arbitrary fields,  
but these are mostly on an elementary level. 
   %(An affine transformation of a bisector field is again a bisector field since these transformations preserve midpoints.) 
We define a boundary of a bisector field $\B$ and show it is  a rational quartic curve, a parabola or a point. We prove in the first two cases
  that all the lines tangent to the boundary of~$\B$ are bisectors in $\B$. In the first case, that 
in which the boundary is a quartic,  
  the set of tangent lines is the entire bisector field, and so the bisector field obeys a kind of holographic principle in which all information in the bisector field is encoded on the boundary. See Figures~1(a) and~1(b). 
   In the second case, that in which the boundary is a parabola as in Figure~1(c), there are additional bisectors that are not tangent to the boundary, but these are easily described as a pencil of parallel lines.  The final case is the least interesting one, that in which the boundary is a point, all the lines through the point are bisectors and there are two additional pencils, each consisting  of parallel lines; see Figure~1(d).
   
    \begin{figure}[h] \label{diagonalspar}
 \begin{center}
 \includegraphics[width=0.95\textwidth,scale=.09]{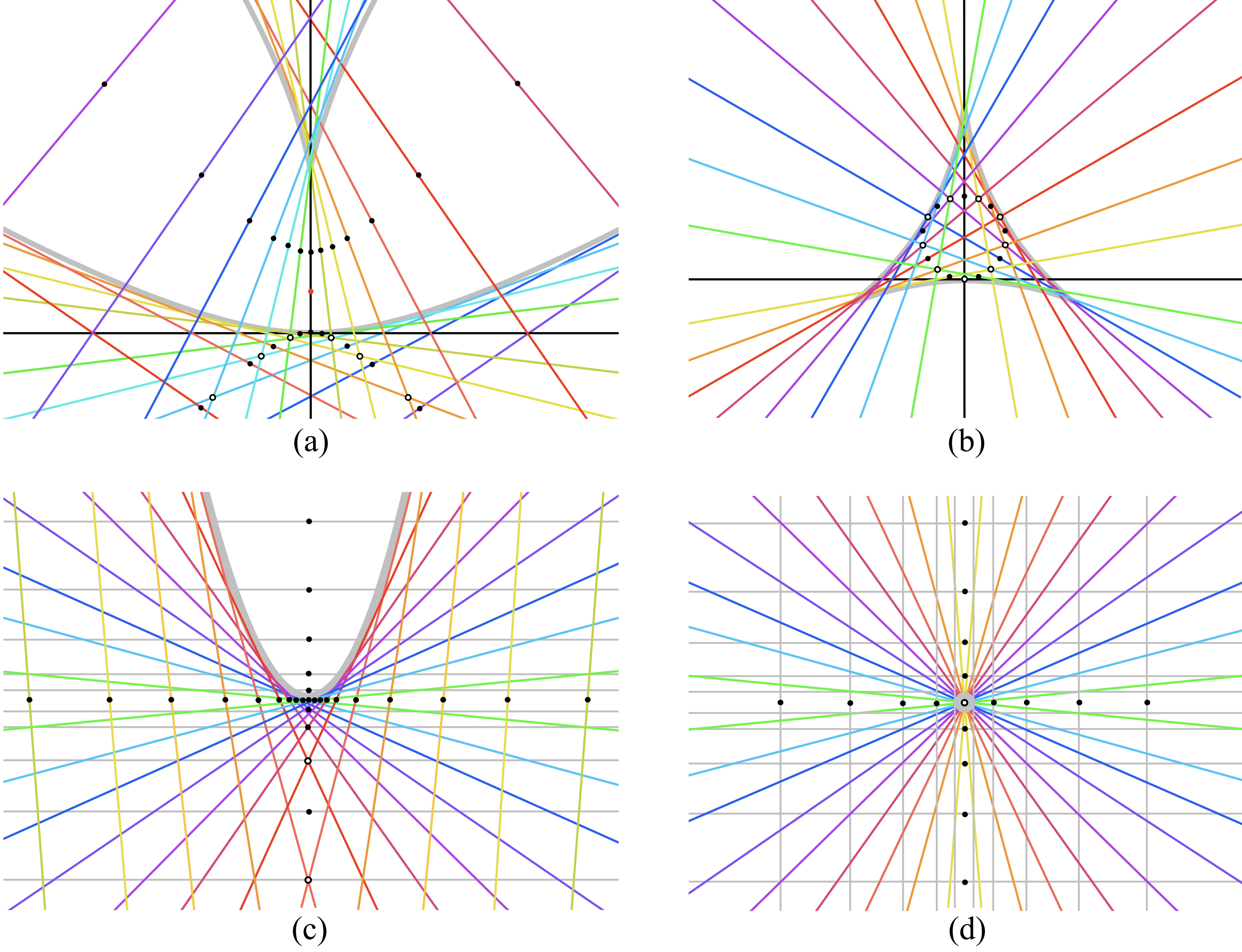} 
 \end{center}
 \caption{
 It is proved in Section~5 that over the field of real numbers, each bisector field is affinely equivalent to one of the four  bisector fields represented here. (Only selected bisectors are shown; the bisector field itself would cover the entire plane.). Two lines of the same color indicate a bisector pair.  
 The midpoints of the bisectors are marked with  black points. White points indicate an intersection of two bisectors from the same pair that is not also a midpoint for any of the bisectors visible in the figures.   %Selected pairs of bisectors from two bisector fields, along with the midpoints of the bisectors. The lines in a pair are shaded the same.  In each figure the black  curve is the boundary of the bisector field.     
 }
\end{figure}

Using these ideas, as 
well as an arithmetic criterion given in  Section~5 for determining affine equivalence of bisector fields, 
 we classify in the last section of the paper the bisector fields over an algebraically closed field and show in this case that up to affine equivalence there are three bisector fields, each of which can be explicitly described. Over a real closed field, we show there are four bisector fields, and over the field of rational numbers there are infinitely many. See Figure~1 for the classification over the field of real numbers. For finite fields, we are only able to give partial results regarding classification. This raises the question, stated at the end of the paper,  of how many bisectors fields there are for a given choice of the field $\k$.   
 
% Our methods include classical  point/line duality of the projective plane and the  theory of algebraic curves over arbitrary fields,  
%but these are mostly on an elementary level. 

%  Working over the algebraic closure $\overline{\k}$ of $\k$ and extending lines to the affine plane over ${\overline{\k}}$ retains the original bisectors while possibly adding new ones. Similarly, the ``$\k$-rational'' bisectors, those bisectors whose coefficients can be chosen from $\k$, are recovered by considering the bisectors that pass through at least two points in the affine plane over $\k$.   
%  Thus it is reasonable to classify bisector fields over an algebraically closed field and up to affine equivalence. One of our main theorems in this direction is the following. 
  
%  Arose in studying pencils of conics
  
%  Three types
  
 %     \medskip
 
%{\noindent}{\bf Theorem A.} {\em  
% Up to affine equivalence and algebraic closure, there are three bisector fields. More precisely, if $\k$ is an algebraically closed field, a 
% set of lines  in the plane forms a bisector field if and only if this set is affinely equivalent to one of following sets of lines: 
%\begin{enumerate}
%\item[{\em (a)}] The set of lines parallel to $x =0$, parallel to $y=0$ or passing through the origin,

%\item[{\em (b)}]
%The set of lines   tangent to the parabola $y=x^2$ or parallel to $y=0$, or

%\item[{\em (c)}] The set of lines tangent to the algebraic curve defined by    
%$$x^4+2x^2y^2+y^4+10x^2y-6y^3-x^2+12y^2-8y=0.$$

%\end{enumerate}}

 \begin{figure}[h] \label{diagonalspar}
 \begin{center}
 \includegraphics[width=0.6\textwidth,scale=.09]{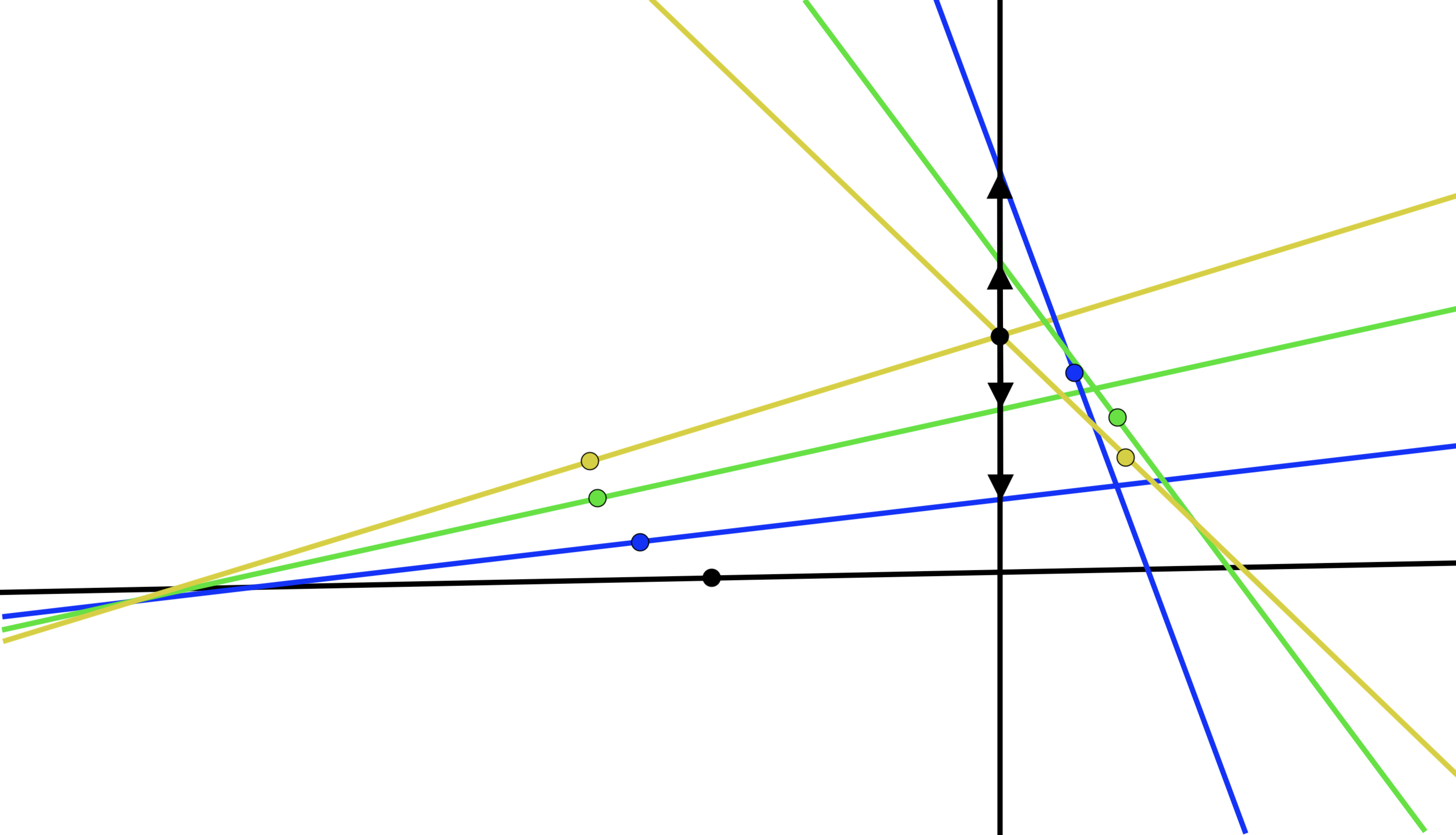} 
 \end{center}
 \caption{
 Four pairs of bisectors from a bisector field. The arrows on the black line help visualize the bisection property. The points shown are the midpoints of the bisectors. Any line chosen from a bisector field along with its midpoint will bisect {\it all} pairs from this same midpoint. 
 %Selected pairs of bisectors from two bisector fields, along with the midpoints of the bisectors. The lines in a pair are shaded the same.  In each figure the black  curve is the boundary of the bisector field.     
 }
\end{figure}

{\it Terminology.} By a {\it $($complete$)$ quadrilateral} $Q=ABA'B'$ we mean a collection of four distinct lines $A,B,A',B'$, the {\it sides} of $Q$, and their six points of intersection, some of which are possibly at infinity. 
The pairs  $A,A'$ and $B,B'$ are the pairs of {\it opposite sides} of $Q$. All other pairs are pairs of {\it adjacent sides.} The intersection of 
 two adjacent sides is a {\it vertex} of~$Q$.  The {\it diagonals} of $Q$ are the two lines through non-adjacent vertices  of $Q$. We require that no adjacent pair of adjacent sides of $Q$ consists of parallel lines, and that all four sides do not go through the same point. We do, however, allow the case in which three sides go through a single point. (In \cite{OWPencil}, adjacent sides of a quadrilateral are allowed to be parallel, but we are excluding that case here. In the terminology of \cite{OWPencil}, we are restricting  to {\it affine} quadrilaterals.)

\section{Background: Bisector fields}

%We often represent the slope of a line $tX-uY+v =0$ as the point $[t:u]$ in $\P^1(\k)$.  

%\begin{definition} \label{bisector def}
%A line $\ell$ is a {\it bisector}  for a set ${\mathbb{B}}$ of paired lines if when extending these lines to $\P^2(\k)$ there is a point $p$ on $\ell$,  the {\it midpoint} of $\ell$, such that for each pair  $A ,{A'}$ in $ {\mathbb{B}}$ there are points $a \in \ell  \cap A$ and $a' \in \ell \cap {A'}$   
% with $a$  reflected across $p$ to~$a'$.  See Figures 1 and~3.\footnote{Figure 3 should be three pairs of lines with one set consisting of parallel lines. Point out also in caption that each line in the configuration also meets the definition.}
 %\end{definition}
%

 We formalize the notion of a bisector field and recall several properties of bisectors from~\cite{OWQuad} in order to give more context for the results of the paper. 
 Apart from the discussion in this section, the  present paper is mostly independent of the articles \cite{OWPencil} and \cite{OWQuad}. 
 %In fact, 
 %A bisector field has several relevant interpretations, but for the sake of following the proofs in this article and formulating the statements of the theorems,  one interpretation is needed, so we give it first. To phrase the definition (which is actually

% A line $\ell$ {\it crosses  a pair of lines} $\PP$ if 
% $\ell \in \P$ and $\ell$ meets at least one of the two lines in the pair $\P$ in the affine plane. 
Let $\PP$ be a pair of lines and $\ell$ be a line. 
Denote by  $\mid_{\PP}(\ell)$  the midpoint of the two points where $\ell$ meets $\PP$, with the following stipulations. 
If exactly one of these points is at infinity,  
 then  $\mid_{\PP}(\ell)$ is  defined to be the point at infinity for $\ell$, while if both are at infinity or $\ell$ is one of the lines in $\PP$, then $\mid_{\PP}(\ell)$ is left undefined. If neither of the two points is at infinity then $\mid_{\PP}(\ell)$  is {\it finite}. 
 %and if both of these points are at infinity, then $\mid_{\PP}(\ell)$ is said to be undefined. 
  If $p$ is a point, the line 
 $\ell$ {\it bisects a set~$\B$ of pairs of lines with midpoint $p$}    if $p = \mid_{\PP}(\ell)$ for all  pairs $\PP$ in $\B$ such  that $ \mid_{\PP}(\ell)$ is defined.

 %If $Q$ is a quadrilateral, then 
%$\ell$ {\it bisects   $Q$} if $\ell$ bisects the %collection consisting of the 
 %pairs of opposite sides of $Q$.  

 \begin{definition} \label{bf} A set $\B$  of pairs of lines is an {\it affine bisector arrangement} if each line in each pair in $\B$  bisects $\B$ with a finite midpoint (see Figure~2). An {\it affine bisector field} is  an affine bisector arrangement $\B$ 
 such that not all lines in $\B$ go through the same point and $\B$ 
 cannot be extended to a larger bisector arrangement. 
The pairs of lines in $\B$ are referred to as {\it $\B$-pairs.} 
   %A bisector arrangement is {\it trivial} if all its lines go through the same point, possibly at infinity. 
\end{definition} 

See Figure~1 for examples of affine bisector fields. 
 The reason for the adjective ``affine'' is that 
each bisector in $\B$ has a finite midpoint,  whereas in \cite{OWPencil} we consider also bisector fields that can have a midpoint at infinity, and in such a case ``affine'' is omitted from the definition. 
It is not obvious from the definition that affine bisector fields exist, but a large source of examples is provided by one of the main theorems in \cite{OWPencil}: 
{\it A bisector field consists of the pairs of lines that occur as the degenerations of conics in a nontrivial pencil of conics} \cite[Theorem~6.3]{OWPencil}. For example, if ${\mathcal{C}}$ is  the set of conics through four points in general position, then the  asymptotes of the hyperbolas in the pencil are line pairs in a bisector field, and the only other line pairs in the bisector field are those that share an axis of symmetry with any degenerate parabolas that occur in the pencil.   %Conversely, every bisector field arises from a pencil of conics in this way. 
Therefore, a pencil of conics in the plane is asymptotically a bisector field, and every bisector field arises this way. 
All this works over any field of characteristic other than $2$.

The non-affine bisector fields 
 amount to a couple of pathological cases  that are less interesting than the affine case, and so to  streamline the present article we will exclude them and drop the adjective ``affine'' from here on out:

\medskip

{\noindent}{\bf  Standing assumption:} {\it Throughout the rest of the article, all bisectors fields are understood to be affine, as in Definition~\ref{bf}.}  

\medskip

A rather different source of bisector fields is   relevant for the proofs and theorems in the present paper.  By a bisector of a quadrilateral $Q = ABA'B'$, we mean a bisector of the set of pairs $\{\{A,A'\},\{B,B'\}\}$, that is, a line $\ell$ that is either a side of $Q$,   parallel to a pair of opposite sides of $Q$, or  crosses all four sides of $Q$ and satisfies  $\mid_{AA'}(\ell) = \mid_{BB'}(\ell)$. We denote the set of bisectors of $Q$ by $\Bis(Q)$. Theorem~\ref{ultimate} below asserts that every bisector field occurs as the set of bisectors of some quadrilateral. However, to state this theorem precisely,  it remains to indicate how to pair the bisectors of a quadrilateral $Q$. For this, we need the following  lemma. 

\begin{lemma} \label{sym}  {\em \cite[Section 2]{OWQuad}}
There is an inner product (a nondegenerate symmetric bilinear form) 
%$\langle -,- \rangle_Q$ on~$\k^2$ 
under which the slope vectors of 
%
%such that if $\ell_1: t_1X-u_1Y+v_1$ and $\ell_2: t_2X-u_2Y+v_2$   is a 
opposite sides of $Q$ are orthogonal, as are the slope vectors of the  diagonals of $Q$. \end{lemma}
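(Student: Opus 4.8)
The plan is to pass from the sides and diagonals of $Q$ to their directions (slope vectors) in $\k^2$ and to recast the three required orthogonality relations as the single statement that one projective involution of the line at infinity $\P^1 = \P(\k^2)$ simultaneously interchanges the directions of each pair of opposite sides and the directions of the two diagonals. The bridge between the two formulations is the standard fact that a nondegenerate symmetric bilinear form $M$ on $\k^2$ is the same data, up to a nonzero scalar, as the involution $[v]\mapsto[v^{\perp}]$ of $\P^1$ sending a direction to its $M$-orthogonal direction (one checks that the associated trace-zero matrix $S$ satisfies $S^2=-\det(M)\,I$, so $S$ is a genuine, non-identity involution of $\P^1$ precisely when $M$ is nondegenerate), and conversely every involution of $\P^1$ arises this way from a form unique up to scaling. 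So it suffices to produce one involution $\tau$ of $\P^1$ with $\tau[a]=[a']$, $\tau[b]=[b']$, and $\tau[d_1]=[d_2]$, where $a,a',b,b'$ are the slope vectors of $A,A',B,B'$ and $d_1,d_2$ are the slope vectors of the diagonals.

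First I would note that all four vertices of $Q$ are finite, since a vertex $A\cap B$ lies at infinity only if the adjacent sides $A,B$ are parallel, which is excluded. As an involution of $\P^1$ is determined by two of the pairs it interchanges, the pairs $\{[a],[a']\}$ and $\{[b],[b']\}$ single out a unique candidate $\tau$, and the content of the lemma is that $\tau$ also swaps $\{[d_1],[d_2]\}$. This is exactly Desargues' involution theorem: the pencil of conics through the four vertices (none on the line at infinity) cuts out an involution on the line at infinity, and its three degenerate members are the line pairs $\{A,A'\}$, $\{B,B'\}$, and $\{d_1,d_2\}$, these being the three ways of splitting the four vertices into two connected pairs. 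The three pairs of points they cut on the line at infinity therefore lie in a common involution, which forces $\tau[d_1]=[d_2]$.

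Because a concrete, self-contained verification is short, I would also include it. After an affine change of coordinates I may assume three vertices are $V_1=(0,0)$, $V_2=(1,0)$, $V_3=(0,1)$ and the fourth is $V_4=(p,q)$; then $a=(1,0)$, $a'=(p,q-1)$, $b=(0,1)$, $b'=(p-1,q)$, $d_1=(p,q)$, $d_2=(-1,1)$. Writing $M=\left(\begin{smallmatrix} m_{11}&m_{12}\\ m_{12}&m_{22}\end{smallmatrix}\right)$, the conditions $a^{\top}Ma'=0$ and $b^{\top}Mb'=0$ read $m_{11}p=m_{12}(1-q)$ and $m_{22}q=m_{12}(1-p)$; substituting these two relations into $d_1^{\top}Md_2=0$ makes the latter collapse identically to $0=0$. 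Thus the diagonal condition is a formal consequence of the two opposite-side conditions, and it remains only to pick a nondegenerate $M$ in the one-parameter family they cut out: taking $m_{12}=1$ gives $\det M=\frac{1-p-q}{pq}$, which is nonzero exactly when $p+q\ne 1$ (and $p,q\ne 0$).

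The step I expect to be the genuine obstacle is precisely this nondegeneracy at the end, which is where the geometric hypotheses must enter: the conditions $p=0$, $q=0$, and $p+q=1$ each say that $V_4$ lies on a line joining two other vertices, i.e.\ that three sides of $Q$ are concurrent. In the principal case of four vertices in general position, nondegeneracy is automatic; since the paper allows three concurrent sides, I would treat that boundary case separately, either by a limiting argument or by a different normalization of the affine frame, and verify that a nondegenerate form (equivalently, a genuine non-identity involution) still exists. Checking the form--involution dictionary in the degenerate-pair situations, where a pair of parallel opposite sides corresponds to a fixed point of $\tau$ rather than a two-point orbit, is the other place where care, though not real difficulty, is needed.
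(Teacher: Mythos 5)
First, a remark on the comparison itself: this paper never proves Lemma~\ref{sym}; it imports it from \cite[Section~2]{OWQuad}, where the inner product is written down explicitly (its associated quadratic form is the polynomial $\Phi$ of Definition~\ref{shape def}, as the paper notes after that definition) and nondegeneracy is verified from those formulas. Your route is genuinely different and more conceptual: you identify a nondegenerate symmetric form up to scalar with an involution of the line at infinity, and then get all three orthogonality conditions at once from Desargues' involution theorem for the pencil of conics through the four vertices. Your self-contained computation in the generic case is correct --- I checked that the two side conditions $m_{11}p=m_{12}(1-q)$ and $m_{22}q=m_{12}(1-p)$ do force $d_1^{\top}Md_2=0$ identically, and that $\det M=(1-p-q)/(pq)$ when $m_{12}=1$ --- and this explains \emph{why} the diagonal condition is compatible with the other two, which the explicit-formula approach of \cite{OWQuad} only confirms by calculation.

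The genuine gap is the boundary case you defer, and it is not vacuous: the paper's definition of quadrilateral explicitly allows three sides through one point, and there both versions of your argument fail as stated. If, say, $A'$, $B$, $B'$ are concurrent, the vertices $A'\cap B$ and $A'\cap B'$ coincide, so the four base points are not in general position (Desargues does not apply), and your chart degenerates: with $V_4=V_3=(0,1)$ the ``slope vector'' $a'=(p,q-1)$ is $(0,0)$, and $A'$ is not even determined by the vertices. (Your reading of the excluded parameter values is also slightly off: for four \emph{distinct} vertices, $p=0$, $q=0$ or $p+q=1$ force two \emph{sides} to coincide, e.g.\ $A'=B$, which is excluded outright since $Q$ must have four distinct lines; the genuinely allowed degeneration is a collision of two vertices, which your chart cannot represent. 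Also, ``a limiting argument'' is unavailable over an arbitrary field.) Fortunately the missing case is short, and you should simply include it: when $A'$, $B$, $B'$ meet at $V_3=V_4$, the two diagonals of $Q$ are the lines $V_1V_4$ and $V_2V_3$, which are $B$ and $B'$ themselves, so the diagonal condition duplicates the condition on $B,B'$ and only two linear conditions on $M$ remain; normalizing $A:y=0$, $B:x=0$, $B':x+y=1$, $A':y=mx+1$ with $m\ne-1$, they give $M=\left(\begin{smallmatrix} -m & 1 \\ 1 & 1 \end{smallmatrix}\right)$ up to scalar, and $\det M=-(m+1)\ne 0$. (The case where the concurrence involves an opposite pair, e.g.\ $A,A',B$ concurrent, is identical after relabeling, the diagonals then being $A$ and $A'$.) With that paragraph added, your proof is complete.
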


Two lines
%$\ell_1: t_1X-u_1Y+v_1$ and $\ell_2: t_2X-u_2Y+v_2$ in $\k^2$ 
are {\it $Q$-orthogonal} if their slope vectors are orthogonal under the inner product of the lemma. 
%
%$$\left< (u_{1},t_{1}), (u_{2},t_{2})\right>_Q=0.$$ 
%Thus %Lemma~\ref{sym} asserts that pairs of opposite sides or diagonals of $Q$ are $Q$-orthogonal. 
%
% each quadrilateral $Q$ establishes an orthogonality relation on the lines in $\k^2$ in such a way that opposite sides or diagonals of $Q$ are $Q$-orthogonal. An explicit version of the bilinear form is given in \cite[Section 2]{OWQuad}.    
 The  notion of $Q$-orthogonality  is not quite sufficient to define a pairing on the bisectors of~$Q$. In order to complete the definition, we require a symmetry condition also: 
  A pair   of bisectors of $Q$ is 
    {\it $Q$-antipodal} if
% whose midpoints are at antipodal points on the bisector locus of $Q$; equivalently, 
the midpoint of the midpoints of the bisectors   is the centroid of $Q$. The {\it bisector locus}, the set of midpoints of  the bisectors of $Q$, is a conic (in fact, the nine point conic for $Q$) \cite[Theorem~5.2 and~Corollary 5.4]{OWQuad}, whose center is the centroid of $Q$. See Figure~1 for examples of bisector loci. 
 Thus two bisectors are $Q$-antipodal if and only if their midpoints lie on antipodal points of the bisector locus; see Figure~3.

 \begin{figure}[h] \label{diagonalspar}
 \begin{center}
 \includegraphics[width=0.7\textwidth,scale=.09]{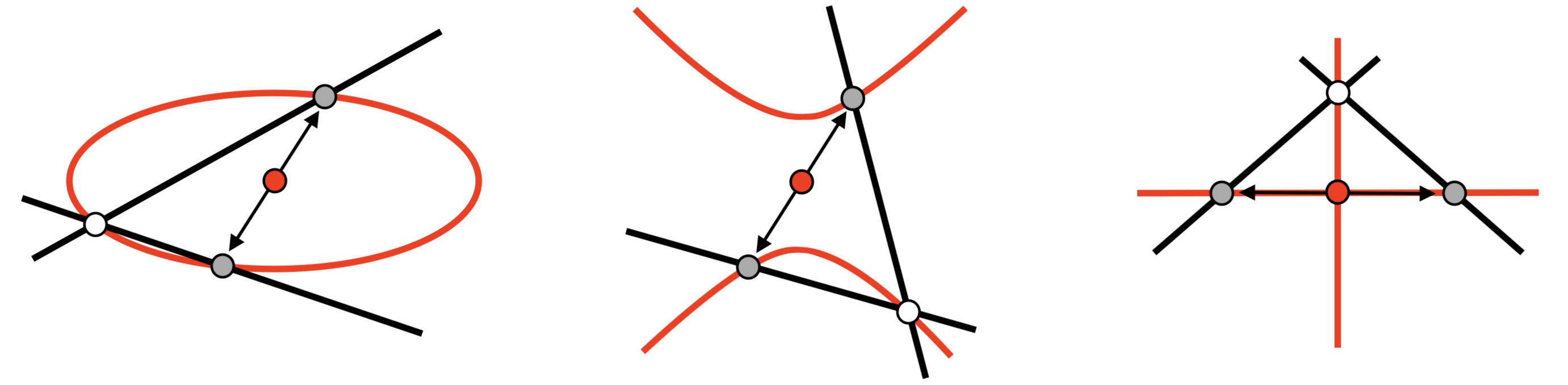} 
 \end{center}
 \caption{
$Q$-pairs of bisectors. (The quadrilateral $Q$ is not pictured.) The bisector locus for $Q$ is the red conic, and the red point is the center of the conic. $Q$-antipodal pairs of bisectors meet on the bisector locus.  
 %Selected pairs of bisectors from two bisector fields, along with the midpoints of the bisectors. The lines in a pair are shaded the same.  In each figure the black  curve is the boundary of the bisector field.     
 }
\end{figure}

We can now define the pairing: A pair of bisectors  is a
 {\it $Q$-pair}  
if  it is  $Q$-antipodal and $Q$-orthogonal. 
In almost all cases,  a pair of bisectors is  $Q$-antipodal if and only if it is $Q$-orthogonal. This happens for example if no sides or diagonals of $Q$ are parallel \cite[Corollary~6.6]{OWQuad}.  If $\ell$ is a bisector of a quadrilateral $Q$, then there is a unique bisector $\ell'$ such that $\ell,\ell'$ is a $Q$-pair
\cite[Corollary~6.7]{OWQuad}. Each pair of opposite sides of $Q$ is a  $Q$-pair of bisectors, as is the pair of diagonals of $Q$; this follows from   \cite[Proposition~2.5]{OWQuad}.

Thus  we may view $\Bis(Q)$ as a collection of paired lines  whose pairing is given by $Q$-pairing.  
One of the main theorems in \cite{OWQuad} is that bisector pairs are themselves bisected, i.e., the $Q$-pairs of bisectors of $Q$ not only bisect $Q$ but are in turn bisected by every   bisector of $Q$. It is this symmetry that is the crucial property for a bisector field, and which is a part of the following theorem. 
%In light of this, the following description of bisector fields is also one of the main theorems of \cite{OWQuad}. 
By a {\it quadrilateral in $\B$} we mean a quadrilateral whose pairs of opposite sides are $\B$-pairs.

 \begin{theorem}  \label{ultimate}  
  \label{is a bisector field} {\em \cite[Theorem~6.9]{OWQuad}}
  If $\B$   is a bisector field (as in Definition~\ref{bf}), then  
  for each quadrilateral $Q$ in $\B$, $\B = \Bis(Q)$ and the pairing on $\B$ is  $Q$-pairing. 

\end{theorem}

%Thus every bisector field is the bisector field of a quadrilateral. 
%In \cite[Corollary~6.11]{OWQuad} it is shown that if a set of bisectors of a quadrilateral admits a pairing that makes it a bisector field, then this is the only pairing that makes the set of lines a bisector field. Thus there is no ambiguity in treating a bisector field as a set of lines rather than a set of paired lines as in Definition~\ref{bf}. 

%In \cite[Theorem 7.1]{OWPencil} it is shown that a collection $\B$ of paired lines is a bisector field if and only if the paired lines in $\B$ are the degenerations of the conics that pass through the vertices of some (equivalently, any) quadrilateral  in $\B$.  

  Any two quadrilaterals in the same bisector field have the same   centroid, and so we  designate this shared centroid as the {\it center} of the bisector field.   That all the quadrilaterals in a bisector field have the same centroid is a consequence of the fact that all these quadrilaterals share the same nine-point conic, and this conic has the centroid of the quadrilateral as its center; see~\cite[Theorem~5.2]{OWQuad}.

\section{The dual curve of bisectors}
\label{phi and psi}

For $n\geq 1$, we denote by $\P^n(\k)$ the projective closure of $\k^n$. The points in $\P^n(\k)$ are written in homogeneous coordinates $[x_1:\ldots:x_{n+1}]$, where $(x_1,\ldots,x_{n+1}) \ne (0,\ldots,0)$ and $[x_0:\ldots:x_{n+1}]$ is the line in $\k^{n+1}$
through the point $(x_1,\ldots,x_{n+1})$. 
Under point-line duality of the projective plane $\P^2(\k)$, each line $tX-uY+vZ =0$ in $\P^2(\k)$ corresponds to the point $[t:u:v]$ in the dual projective plane. Thus, in describing a bisector field, it it useful to consider the dual points of the bisectors. We do this in the present section and show that the coefficients of the  bisectors in a bisector field are encoded onto a singular cubic curve in $\P^2(\k)$. 

As discussed in the last section, to describe a bisector field, it is sufficient to describe the bisectors of any quadrilateral in the bisector field. 
We begin by   associating two polynomials to  a quadrilateral, the shape polynomial and the position polynomial. 
We will show in Corollary~\ref{switch} that these two polynomials completely determine the bisectors of $Q$. 
We use the following notational convention throughout the paper.

\smallskip

\begin{quote} 
{\it Given  
 a line  $L $ in the affine plane $\k^2$, we write an equation for $L$   as 
  $t_Lx-u_Ly+v_L=0,$ where $t_L,u_L,v_L \in \k$. In order to have a canonical choice of coefficients for the line, we assume that if $u_L = 0$ then $t_L=1$ and if $u_L \ne 0$ then $u_L=1$. The slope of $L$ will often be represented as the element $[t_L:u_L]$ in $ \P^1(\k)$.}
\end{quote}  

\smallskip
  
Let  $Q = ABA'{B'}$ be a quadrilateral, and consider the  polynomial in $\k[T,U]$ whose zeroes in $\P^1(\k)$ are all the slopes of the sides of $Q$:
  $$\Theta(T,U)=(u_{A}T-t_{A}U)(u_BT-t_BU)(u_{A'}T-t_{A'}U)(u_{B'}T-t_{B'}U).$$
 Let $\Theta_A$ be $\Theta$ with     the $A$ term $u_At-t_Au$ deleted, and let $\Theta_{A'}$ be $\Theta$ with the $A'$ term deleted. Define $\Theta_B$ and $\Theta_{B'}$ similarly, but by deleting the $B$ and $B'$ terms from $-\Theta$ rather than $\Theta$. 
 %In this way we obtain four homogeneous polynomials of degree $3$. 
 %
% For a line $L$, 
% define homogeneous  polynomials $\Theta_A,\Theta_B,\Theta_{A'},\Theta_{B'}$ in  $\k[T,U]$ by
 %  \begin{eqnarray*}
%\Theta_A & = & (u_BT-t_BU)(u_{A'}T-t_{A'}U)(u_{B'}T-t_{B'}U) \\
%\Theta_{A'} & = & (u_{A}T-t_{A}U)(u_BT-t_BU)(u_{B'}T-t_{B'}U) \\
%\Theta_{B} & = & (t_{A}U-u_{A}T)(t_{A'}U-u_{A'}T)(t_{B'}U-u_{B'}T) \\
%\Theta_{B'} & = & (t_{A}U-u_{A}T)(t_{B}U-u_{B}T)(t_{A'}U-u_{A'}T).
%\end{eqnarray*}
%$$\Theta_L(T,U)  = F_L \frac{(u_AT-t_AU)(u_BT-t_BU)(u_{A'}T-t_{A'}U)(u_DT-t_DU)}{u_LT-t_LU},$$
% where $F_L = 1$ if $L$ is $A$ or ${A'}$ and $F_L = -1$ if $L$ is $B$ or $D$.  
%
%For  each $L \in \{A,B,{A'},D\}$, set 
%\begin{eqnarray*}
%{\Theta}_A & = &  \Delta_B \Delta_{A'} \Delta_{B'}  \:\:  \:\:  \:\:  \:\:  \:\: 
%\Theta_B   \:\: =  \:\:   -\Delta_A \Delta_{A'} \Delta_{B'} \\
%\Theta_{A'} & = &  \Delta_A \Delta_B \Delta_{B'}  \:\:  \:\:  \:\:  \:\:  \:\: 
% \Theta_{B'}  \:\: =   \:\:  -\Delta_A \Delta_B \Delta_{B'}
 %\end{eqnarray*}

\begin{definition}  \label{shape def}
The {\it position polynomial}   for $Q$ is 
\begin{eqnarray*}
\Psi(T,U) =  \sum v_L \Theta_L(T,U), {\mbox{ where }} L {\mbox{ ranges over the sides of }} Q.    \end{eqnarray*} 
The {\it shape polynomial}  for $Q$ is the polynomial $\Phi(T,U) \in \k[T,U]$ for which  $$T\Phi(T,U)=\sum t_L\Theta_L(T,U) \:\: {\mbox{ and }} \:\: U\Phi(T,U) =   \sum u_L\Theta_L(T,U),$$  
%where   $L$ ranges over the sides of $Q$. 
Alternatively, $\Phi(T,U) = \alpha T^2-2\beta TU + \gamma U^2,$ where 
\begin{eqnarray*}
\alpha & = & t_Au_Bu_{A'}u_{B'} - u_At_Bu_{A'}u_{B'} + u_Au_Bt_{A'}u_{B'}  - u_Au_Bu_{A'}t_{B'} \\
%
%u_Au_B (t_{A'}u_{B'}-t_{B'}u_{A'}) +u_{C}u_{B'} (t_Au_B-t_Bu_A)\\
%\alpha 
%& = & u_Au_B \mathcal{D}_{CD} +u_{C}u_{B'} \mathcal{D}_{AB}\\
\beta 
& = & 
 t_Au_Bt_{A'}u_{B'}-u_At_Bu_{A'}t_{B'}\\
%& = & u_At_B(t_{A'}u_{B'}-t_{B'}u_{A'}) + t_{A'}u_{B'} (t_Au_B-t_Bu_A) \\
\gamma 
& = &  t_At_Bt_{A'}u_{B'}-t_At_Bu_{A'}t_{B'}+t_Au_Bt_{A'}t_{B'}-u_At_Bt_{A'}t_{B'}.
%& = & t_At_B(t_Cu_D-t_Du_C)+ t_Ct_D (t_Au_B-t_Bu_A)
  \end{eqnarray*}  
We write $\Phi_Q$ and $\Psi_Q$ for $\Phi$ and $\Psi$ when the quadrilateral is not clear from context. 
 \end{definition}
 
 That these two definitions for $\Phi$ are equivalent follows from the fact
  that 
$$\alpha = \sum t_L\Theta_L(1,0), \:\: \gamma = \sum t_L\Theta_L(0,1), \:\: -2\beta = \sum t_L\Theta_L(1,1)-\alpha-\gamma,$$ 
 and similarly for the polynomial $\sum_L u_L\Theta_L(T,U)$.  
 Although not needed in what follows, the polynomial $\Phi$ can also be viewed as the squared norm of the inner product
$\langle -,- \rangle_Q$  from Lemma~\ref{sym}.
% That is,  
%\begin{center} $\Phi(t,u) = \left< (u,t),(u,t) \right>_Q$ for each $(u,t) \in \k^2$.
%\end{center} 
This follows from the explicit description of the inner product given in \cite[Section~2]{OWQuad}.  

%Therefore, a line $tX-uY+v=0$ is $Q$-orthogonal to itself if and only if $\Phi(t,u) = 0$.  

 The  shape polynomial $\Phi$ depends only on the slopes of the sides of $Q$, while the position polynomial $\Psi$ depends also on the position of $Q$. The reason for ``shape''  is that $\Phi$ gives the shape of the bisector locus for $Q$  which, if $Q$ is proper, coincides with the nine-point conic for $Q$.  
(The bisector locus is discussed in Section~2.)
Namely, 
 let $(h,k)$ be the centroid of $Q$ and let $(a,b)$ be a diagonal point of $Q$, that is, a point in $\k^2$ where a pair of opposite sides or diagonals of $Q$ meets.  It is shown in  \cite[Theorem 5.2]{OWQuad}
 that   the bisector locus of   $Q$   is a conic with center $(h,k)$  given by the equation $$\Phi(Y-k,X-h) = \Phi(b-k,a-h).$$     %In fact, if $Q$ is a proper quadrilateral, then the bisector locus is the nine-point conic of the quadrangle to which $Q$ belongs {\bf [ref]}. 

 \begin{definition} For a quadrilateral $Q$,  the {\it dual polynomial} for $Q$ is the 
  homogeneous   polynomial  
 $\Psi(T,U)-V\Phi(T,U) \in \k[T,U,V].$ The {\it dual curve} of $Q$ is the zero set in $\P^2(\k)$ of the dual polynomial. 
\end{definition}

%\begin{figure}[h] \label{diagonalspar}
 %\begin{center}
 %%%%%%%%\includegraphics[width=0.9\textwidth,scale=.09]{DeltoidCardiod_Fields.png} 
 %%%%%%%%%\includegraphics[scale=.09]{TwoPairsConeLinesAnti.jpg}
% \end{center}
% \caption{The figure at left is the dual curve of a quadrilateral with no parallel sides or diagonals; the middle figure that of a trapezoid that is not a parallelogram; and the figure at right that of parallelogram. The curves are shown in the chart $U =1$. Each point on the dual curve gives the coordinates of a bisector in the bisector field of the quadrilateral; see Theorem~\ref{cubic}.}
%\end{figure}
Since $\Psi$ and $\Phi$ are homogeneous in the variables $T$ and $U$ and the quadratic form $\Phi$ is never the zero polynomial (this follows from the nondegeneracy of the bilinear form in Lemma~\ref{sym} and the fact that $\Phi$ is the quadratic form associated to this inner product), the dual polynomial has degree~$3$ for any choice of quadrilateral~$Q$. Also, since $\Psi$ and $\Phi$ are homogeneous, the dual polynomial  has at least one singularity, namely the point $[0:0:1]$. The only other singularities   are those that appear when $\Psi$ and $\Phi$ share a common zero, in which case the cubic is not an irreducible curve. As the next theorem shows, the 
reason for the terminology of dual curve is that the points on this curve are the dual points for the bisectors of a quadrilateral.

%See also Corollary~\ref{irreducible cubic}. 

\begin{theorem} \label{cubic}  %$\B$ be a bisector field, and let $Q$ be a quadrilateral in $\B$.  
A line $tX-uY+v=0$ bisects a quadrilateral $Q$ if and only if $[t:u:v]$ is on the dual curve   of  $Q$.
%
% $\B$ be a bisector field. A line $\ell: tx-uy+v$   bisects
 % $Q$ if and only if  $[t:u:v]$ is a zero of the dual polynomial for $Q$. 
%\begin{center} 

 %In this case, the bisector $\ell$ is affine if and only if $v \Phi(t,u) = \Psi(t,u).$
 
 %\Psi_Q(t,u)$. 
% In this case, $\ell$ crosses at least one pair of opposite sides of $Q$, say $A$ and $C$, and the midpoint of $\ell$ is $\Pi_{AC}(t,u).$
% $$\left(\frac{v_L(u_A D_{LC}+u_CD_{LA} ) + u_L(v_AD_{LC}+v_CD_{LA})}{2D_{LC}D_{LA}},
%\frac{ v_L(t_A D_{LC}+t_CD_{LA} ) + t_L(v_AD_{LC}+v_CD_{LA})}{2D_{LC}D_{LA}}
%\right)
 %$$
 
 % and $p=(\frac{1}{t} \Upsilon(t,u), \frac{1}{u} 
 
\end{theorem}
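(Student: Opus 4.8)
The plan is to verify the equivalence by a direct coordinate computation, treating first the generic situation and then the degenerate configurations separately. Write $\ell$ for the line $tX-uY+v=0$, and recall that a side $L$ of $Q$ has equation $t_LX-u_LY+v_L=0$. Solving these two linear equations by Cramer's rule, the point where $\ell$ meets $L$ is $P_L=\big(\tfrac{vu_L-uv_L}{-\theta_L},\,\tfrac{vt_L-tv_L}{-\theta_L}\big)$, where $\theta_L:=u_Lt-t_Lu$ is exactly the value at $(T,U)=(t,u)$ of the factor $u_LT-t_LU$ appearing in $\Theta$. In particular, $\ell$ meets all four sides in finite points precisely when $\Theta(t,u)=\theta_A\theta_B\theta_{A'}\theta_{B'}\ne 0$, which is the generic case I would handle first.

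In that generic case, $\ell$ bisects $Q$ means $\mathrm{mid}_{AA'}(\ell)=\mathrm{mid}_{BB'}(\ell)$, i.e. $P_A+P_{A'}=P_B+P_{B'}$. I would clear the common denominator by multiplying through by $\Theta(t,u)$, using $1/\theta_L=\pm\Theta_L(t,u)/\Theta(t,u)$ with sign $+$ for $L\in\{A,A'\}$ and $-$ for $L\in\{B,B'\}$. The crucial point is that this sign is exactly cancelled by the $-\Theta$ convention built into the definitions of $\Theta_B$ and $\Theta_{B'}$, so all four summands acquire a uniform sign. The $X$-coordinate equation then collapses to $\sum_L(vu_L-uv_L)\,\Theta_L(t,u)=0$ and the $Y$-coordinate equation to $\sum_L(vt_L-tv_L)\,\Theta_L(t,u)=0$. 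Applying the defining identities $\sum_L u_L\Theta_L=U\Phi$, $\sum_L t_L\Theta_L=T\Phi$, and $\sum_L v_L\Theta_L=\Psi$, these become $u\big(\Psi(t,u)-v\Phi(t,u)\big)=0$ and $t\big(\Psi(t,u)-v\Phi(t,u)\big)=0$. Since $(t,u)\ne(0,0)$ for any line, the two together are equivalent to $\Psi(t,u)-v\Phi(t,u)=0$, which is exactly the condition that $[t:u:v]$ lie on the dual curve. This settles both implications at once in the generic case.

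It remains to treat the configurations with $\Theta(t,u)=0$, i.e. when $\ell$ is parallel to at least one side; since adjacent sides of $Q$ are never parallel, $\ell$ can be parallel only to a single side or to one pair of opposite sides. I would dispose of these by evaluating which factors $\Theta_L(t,u)$ vanish. If $\ell$ equals a side, say $A$, then every $\Theta_L(t,u)$ with $L\ne A$ vanishes while $\Phi(t,u)=\Theta_A(t,u)$ and $\Psi(t,u)=v_A\Theta_A(t,u)$, so $\Psi(t,u)-v\Phi(t,u)=0$; thus sides, which are bisectors by definition, always lie on the dual curve. If $\ell$ is parallel to the opposite pair $A,A'$ (so $A\parallel A'$), all four $\Theta_L(t,u)$ vanish, whence $\Phi(t,u)=\Psi(t,u)=0$ and $[t:u:v]$ lies on the dual curve for every $v$; these lines are again bisectors by definition, matching the conclusion. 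Finally, if $\ell$ is parallel to a single side $A$ with $\ell\ne A$, a short evaluation gives $\Psi(t,u)-v\Phi(t,u)=(v_A-\lambda v)\,\Theta_A(t,u)$, where $(t_A,u_A)=\lambda(t,u)$ and $\Theta_A(t,u)\ne 0$; hence $\ell$ lies on the dual curve only when $[t:u:v]=[t_A:u_A:v_A]$, i.e. $\ell=A$, a contradiction, so such an $\ell$ is neither on the curve nor a bisector (its midpoint on the pair $A,A'$ is at infinity while that on $B,B'$ is finite), and the two notions again agree.

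I expect the main obstacle to be the sign bookkeeping in the generic computation together with the careful handling of points at infinity in the degenerate cases. The whole theorem hinges on the fact that the antisymmetric combinations $vu_L-uv_L$ and $vt_L-tv_L$ coming from the intersection formula, together with the $-\Theta$ sign convention for $\Theta_B$ and $\Theta_{B'}$, reassemble exactly into the coefficients defining $\Psi$ and $\Phi$; verifying this alignment---rather than any deeper geometric input---is the heart of the argument.
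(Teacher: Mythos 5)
Your proposal is correct and follows essentially the same route as the paper's proof: the generic case (line not parallel to any side) is handled by the identical Cramer's-rule computation collapsing to $u\bigl(\Psi(t,u)-v\Phi(t,u)\bigr)=0$ and $t\bigl(\Psi(t,u)-v\Phi(t,u)\bigr)=0$, and the parallel cases are resolved by the same key identities $\Phi(t_M,u_M)=\Theta_M(t_M,u_M)$ and $\Psi(t_M,u_M)=v_M\Theta_M(t_M,u_M)$. The only difference is organizational — you split the parallel situation into three explicit subcases (equal to a side, parallel to an opposite pair, parallel to exactly one side) where the paper treats them in a single chain of equivalences — but the content is the same.
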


\begin{proof} Write $Q=ABA'B'$.
%let  $\ell$ be a line in $\k^2$, and write  the equation for $\ell$ as  
%$tx-uy+v=0$ for some $t,u,v\in \k$.  
%%The line $\ell$ is parallel to at most two sides of ${\mathcal{Q}}$, and if it parallel to two sides,  these  must be opposite sides of ${{Q}}$. Thus there are two pairs of opposite sides, say $\{A,{A'}\}$ and $\{B,D\}$, such that $\ell$ is not parallel to any side of the quadrilateral $Q=AB{A'}D$.  %Let $E,F$ denote the diagonals of $Q$.
% Then if $\ell$ is parallel to a side of $Q$, it is parallel to either  $E$ or $F$, or possibly other 
 Suppose first the line  $\ell: tX-uY+v=0$  is not parallel to any of the sides of Q. For each line $L \in \{A,B,A',B'\}$,  define 
$$x_{L} = \frac{uv_L-u_Lv}{tu_L-t_Lu} \:\: \: { \rm{ and } } \:\:\: y_L = \frac{tv_L -t_L v}{tu_L-t_Lu}.$$
Since $\ell$ is not parallel to $L$, the line $\ell$ crosses   $L$ at $(x_L,y_L)$. 
The line  $\ell$ is a bisector of $Q$ if and only if the midpoint of the points where $\ell$ crosses $A$ and $A'$ is also the midpoint of the points where $\ell$ crosses $B$ and $B'$;
% $\mid_{A{A'}}(\ell) = \mid_{B{B'}}(\ell)$; 
if and only if $x_A-x_B+ x_{A'} -x_{B'} =0$  and $y_A -y_B + y_{A'}  -y_{B'}   =0;$ if and only if 
$$ \sum (uv_L-vu_L)\Theta_L(t,u) =0 \:\: {\mbox { and } } \:\:  \sum (v_Lt-t_Lv)\Theta_L(t,u)=0;$$
if and only if 
$${\rm (a) } \: \: u\sum v_L\Theta_L (t,u)-v \sum u_L\Theta_L(t,u)=0 \:\: {\mbox { and } } \:\: {\rm (b) } \: \:
 t\sum v_L\Theta_L(t,u) - v\sum t_L\Theta_L(t,u)=0.$$
 By Definition~\ref{shape def}, $u\Phi(t,u) = \sum u_L\Theta_L(t,u)$ and $t\Phi(t,u) = \sum t_L\Theta_L(t,u)$.  Thus the equations in  (a) and (b)  become
  $$u\sum v_L \Theta_L(t,u)-vu\Phi(t,u) =0 \:\: {\mbox { and } } \:\: t\sum v_L \Theta_L(t,u) -vt\Phi(t,u)=0.$$
 Since $u$ and $t$ are not both $0$, the equations in (a) and (b) are valid (and hence $\ell$ is a bisector) if and only if $\sum v_L \Theta_L(t,u)-v\Phi(t,u) =0$. Since $\Psi(t,u) = \sum v_L \Theta_L(t,u)$, the theorem is proved under the assumption that $\ell$ is not parallel to any of the sides of $Q$.

Now suppose $\ell$ is parallel to a side $M$ of $Q$,  
 so that $\ell$ is defined by the equation $t_{M}x-u_My+v=0$ for some $v \in \k$. We claim $\ell$ is a bisector of $Q$ if and only if $[t_M:u_M:v]$ is on the dual curve of $Q$.
Using Definition~\ref{shape def} and the definition of the polynomials $\Theta_L$,   
 %\begin{center} 
\begin{eqnarray*}
u_M\Phi(t_M,u_M) &  = &   \sum u_L\Theta_L(t_M,u_M) \:\: = \:\:  u_M\Theta_M(t_M,u_M) \\
 t_M\Phi(t_M,u_M) & = & \sum t_L\Theta_L(t_M,u_M) \:\: = \:\:   t_M\Theta_M(t_M,u_M).
 \end{eqnarray*} 
 Since either $t_M\ne 0$ or $u_M \ne 0$, we conclude $\Phi(t_M,u_M) = \Theta_M(t_M,u_M).$

 Now $[t_M:u_M:v]$ is on the dual curve of $Q$ if and only if
  $v\Phi(t_M,u_M) = \Psi(t_M,u_M)$; if and only if $v \Theta_M(t_M,u_M) =  v_M\Theta_M(t_M,u_M)$; if and only if $v=v_M$ or $\Theta_M(t_M,u_M) =0$;  if and only if $\ell = M$ (since $\ell$ is parallel to $M$) or $M$ is parallel to another side of $Q$; if and only if $\ell$ is a bisector of $Q$. 
\end{proof}

As discussed in Section~2 (and proved in \cite[Corollary 6.10]{OWQuad}) two quadrilaterals that have the same bisectors have the same bisector field. The next corollary shows that the  dual polynomial can also be used to determine if two quadrilaterals have the same bisector field.

\begin{corollary}  \label{switch}  Let $Q$ and $Q'$ be quadrilaterals. Then $Q$ and $Q'$ have the same bisectors (and hence the same bisector fields) if and only if $Q$ and $Q'$ have the same dual polynomial;
%  if and only if the dual polynomial for $Q$ is a scalar multiple of the dual polynomial  for $Q'$; 
if and only if  $\Phi_Q = \lambda \Phi_{Q'}$ and $\Psi_{Q}=\lambda \Phi_{Q'}$ for some 
 $\lambda \in \k$.  
\end{corollary}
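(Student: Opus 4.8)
The plan is to prove the three conditions equivalent by isolating the two easy implications and deferring the passage from ``same bisectors'' to ``same bisector field'' to the cited \cite[Corollary~6.10]{OWQuad}. First I would record the trivial direction: if $\Phi_Q=\lambda\Phi_{Q'}$ and $\Psi_Q=\lambda\Psi_{Q'}$ for some $\lambda\in\k$ (necessarily $\lambda\ne 0$, since $\Phi_Q\ne 0$), then the dual polynomials satisfy $\Psi_Q-V\Phi_Q=\lambda(\Psi_{Q'}-V\Phi_{Q'})$ and so define the same dual curve in $\P^2(\k)$. By Theorem~\ref{cubic} a line $tX-uY+v=0$ bisects $Q$ exactly when $[t:u:v]$ lies on this common curve, and likewise for $Q'$, so $Q$ and $Q'$ have identical bisectors. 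This settles ``proportional $\Rightarrow$ same dual curve $\Rightarrow$ same bisectors'' with no computation.

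The content lies in the converse. Assume $Q$ and $Q'$ have the same bisectors. By Theorem~\ref{cubic} this says precisely that the two dual curves coincide as subsets of $\P^2(\k)$. Both pass through the singular point $[0:0:1]$, and I would analyze the projection $[t:u:v]\mapsto[t:u]$ away from this point. Restricting the dual polynomial of $Q$ to the line through $[0:0:1]$ in direction $[t_0:u_0]$, writing its points as $[t_0 s:u_0 s:w]$, factors it as $s^2\bigl(s\,\Psi_Q(t_0,u_0)-w\,\Phi_Q(t_0,u_0)\bigr)$; hence whenever $\Phi_Q(t_0,u_0)\ne 0$ the fiber contains the single additional point $[t_0:u_0:\Psi_Q(t_0,u_0)/\Phi_Q(t_0,u_0)]$, i.e.\ there is a unique bisector of slope $[t_0:u_0]$, with dual coordinate $\Psi_Q/\Phi_Q$. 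Matching the unique points of the two curves over every direction with $\Phi_Q(t_0,u_0)\Phi_{Q'}(t_0,u_0)\ne 0$ gives $\Psi_Q\Phi_{Q'}=\Psi_{Q'}\Phi_Q$ at all but finitely many points of $\P^1(\k)$; as this is a binary form of degree $5$ vanishing on an infinite set, it is identically zero, so $\Psi_Q\Phi_{Q'}=\Psi_{Q'}\Phi_Q$ in $\k[T,U]$.

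This single identity does not by itself force $\Phi_Q$ and $\Phi_{Q'}$ to be proportional (for example $\Phi_Q=T^2$ and $\Phi_{Q'}=TU$ are compatible with a suitable choice of the $\Psi$'s), so I would inject the geometry of the bisector locus. Having the same bisectors, $Q$ and $Q'$ generate the same bisector field by \cite[Corollary~6.10]{OWQuad}, and all quadrilaterals in a bisector field share a single nine-point conic (Section~2); this conic is the bisector locus, cut out about the common center by $\Phi$ as in \cite[Theorem~5.2]{OWQuad}. Equality of these conics forces $\Phi_Q=\lambda\Phi_{Q'}$ for some $\lambda\in\k^\times$. Substituting into $\Psi_Q\Phi_{Q'}=\Psi_{Q'}\Phi_Q=\lambda\,\Psi_{Q'}\Phi_{Q'}$ and cancelling the nonzero factor $\Phi_{Q'}$ in the integral domain $\k[T,U]$ yields $\Psi_Q=\lambda\Psi_{Q'}$, completing the cycle of equivalences.

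The step I expect to be the genuine obstacle is each passage from coincidence of zero sets to proportionality of the defining forms: the ``vanishes at enough points, hence identically zero'' deduction for the quintic, and the parallel ``same conic, hence proportional equations'' deduction for $\Phi$. Both are immediate when $\k$ is infinite, where $\P^1(\k)$ has more than five points and a nondegenerate conic determines its equation up to scalar, but each can fail over a small finite field, where a nonzero binary quintic may vanish at every rational point and a conic may reduce to a point or to the empty set. To cover the finite-field case I would refrain from using only generic directions and instead exploit the full zero set: record which directions $[t_0:u_0]$ carry an entire pencil of parallel bisectors (the common zeros of $\Phi$ and $\Psi$) and which carry none (the zeros of $\Phi$ that are not zeros of $\Psi$), and use this to control the zero divisors of $\Phi_Q$ and $\Psi_Q$ directly before concluding proportionality. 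This is also the point at which the difficulties behind the paper's merely partial finite-field classification enter.
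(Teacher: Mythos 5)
Your easy direction is correct, and your treatment of the converse over an \emph{infinite} field is a legitimate route that genuinely differs from the paper's: you work entirely over $\k$, extracting the identity $\Psi_Q\Phi_{Q'}=\Psi_{Q'}\Phi_Q$ from a fiber-by-fiber comparison of the two dual curves under projection from $[0:0:1]$, and then importing proportionality of the shape polynomials from the shared nine-point conic via \cite[Corollary~6.10]{OWQuad}. (Even there, one more argument is needed than you give: a nondegenerate conic over $\mathbb{Q}$ with few or no rational points does not determine its equation, so you must first note that over an infinite field the bisector locus is an infinite point set -- each of its points is the midpoint of at most three bisectors -- before B\'ezout forces proportionality.) The paper never argues over $\k$ at all: it first proves that if $Q_1,Q_2$ have the same bisectors over $\k$, then $\overline{Q_1},\overline{Q_2}$ have the same bisectors over $\overline{\k}$, and only then compares dual curves, where the field is infinite and every step you flagged as delicate is automatic; $\lambda\in\k$ falls out because the coefficients lie in $\k$.

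The genuine gap is exactly the finite-field case, and your proposed repair does not close it. Your argument consumes only the set of $\k$-rational bisectors, and for a cubic bisector field this data is too thin over a small field: both shape polynomials can be irreducible over $\k$ (in standard form $\Phi=T^2-\mu U^2$ with $\mu$ a nonsquare), in which case the zero sets you propose to bookkeep -- slopes carrying pencils of parallel bisectors, slopes carrying no bisector -- are \emph{empty} and record nothing; and over $\P^1(\SF_3)$ there are only four slopes, so neither the vanishing of a degree-five form (which needs six points) nor a B\'ezout argument for two conics cutting out the same bisector locus (which needs five shared points, while a conic over $\SF_3$ has at most four) can be run. Even in the split cubic case over $\SF_3$, matching the two no-bisector slopes gives $\Phi_Q=\lambda\Phi_{Q'}$ but leaves $\Psi_Q-\lambda\Psi_{Q'}$, a cubic form, constrained at only two points. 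The missing idea is the paper's scalar-extension step, which is bisector-field geometry rather than interpolation: by \cite[Corollary~6.10]{OWQuad} same bisectors forces the same pairing; $Q$-orthogonality and $Q$-antipodality are defined by coefficient and centroid data unchanged under field extension, so the opposite sides of $\overline{Q_1}$ are $\overline{Q_2}$-pairs of bisectors of $\overline{Q_2}$; hence $\overline{Q_1}$ is a quadrilateral in $\Bis(\overline{Q_2})$, and Theorem~\ref{ultimate} gives $\Bis(\overline{Q_1})=\Bis(\overline{Q_2})$. Your generic-fiber argument (or the paper's terser version of it) then finishes the proof over $\overline{\k}$ uniformly for all $\k$; without it, your proof covers infinite fields only.
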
 

\begin{proof} %By the reduction in {\bf [ref]}, 
%The first equivalence follows from Theorem~\ref{cubic}. 
%To see the other two equivalences, 
For each $i=1,2$, let $\overline{Q_i}$   be the extension of $Q_i$  to the affine plane over the algebraic closure $\overline{\k}$ of $\k$.  Then %$\Phi_{\overline{Q}} = \Phi_Q$ and $\Phi_{\overline{Q'}}=\Phi_{Q'}$ and 
$Q_i$ and $\overline{Q_i}$ share the same shape, position and dual polynomials since these are defined in terms of the coefficient data of the lines that comprise the sides of $Q_i$.
Since $\overline{\k}$ is infinite, Theorem~\ref{cubic} implies $\overline{Q_1}$ and $\overline{Q_2}$ share the same bisectors if and only if  the
dual curves of $\overline{Q_1}$ and $\overline{Q_2}$ are the same; if and only if the dual polynomial of $Q_1$ is a scalar multiple (over $\overline{\k}$) of the dual polynomial of $Q_2$; if and only if there is $\lambda \in \overline{\k}$ such that  $\Phi_{Q_1} = \lambda \Phi_{Q_2}$ and $\Psi_{Q_1}=\lambda \Phi_{Q_2}$. In this case, $\lambda \in \k$ since the coefficients of $\Phi_{Q_1}$ and $\Psi_{Q_2}$ are in $\k$. Thus
to prove the corollary it suffices to prove that $Q_1$ and $Q_2$ have the same bisectors if and only if $\overline{Q_1}$ and $\overline{Q_2}$ do. 

  Suppose $Q_1$ and $Q_2$ have the same bisectors. 
  For this part of the proof we rely on the discussion of pairing from Section~2. 
  By \cite[Corollary~6.10]{OWQuad}, $Q_1$-pairing and $Q_2$-pairing are   the same for these bisectors. Since 
  $\overline{Q_i}$-orthogonality is defined by the same inner product as $Q_i$-orthogonality (it is defined in terms of    the coefficients of the lines that are the sides of $Q_i$; see \cite[Section~2]{OWQuad}), and $Q_i$ and $\overline{Q_i}$ have the same centroid, it follows that the extensions $\overline{\ell}, \overline{\ell'}$ of two bisectors $\ell,\ell'$ of $Q_i$ form a $\overline{Q_i}$-pair if and only if $ \ell,\ell'$ form a $Q_i$-pair. Therefore,
  since the sides of $\overline{Q_1}$, being extended from bisectors of $Q_1$, and hence bisectors of  $Q_2$, are bisectors of $\overline{Q_2}$, it follows that 
   the quadrilateral $\overline{Q_1}$ is in the bisector field of $\overline{Q_2}$, which, as discussed in Section 2, implies $\overline{Q_1}$ and $\overline{Q_2}$ have the same bisectors. 
   Conversely, if $\overline{Q_1}$ and $\overline{Q_2}$ have the same bisectors, then, as we have already proved,  $Q_1$ and $Q_2$ have the same dual polynomials up to scalar multiple, and    hence $Q_1$ and $Q_2$ have the same bisectors by Theorem~\ref{cubic}.
 \end{proof}

%\begin{corollary} If $Q$ is quadrilateral, then every point in the plane has at most three bisectors through it. 
%\end{corollary} 

%\begin{proof}
%\end{proof} 

%\begin{lemma} The zeroes of $\Phi$ are the slopes of the asymptotes of the bisector locus of $\overline{Q}$. These slopes are the only two points in $\P^1(\overline{\k})$ that cannot occur as the slope of a bisector of $\overline{Q}$. 
% The zeroes of $\Psi$ are the slopes of the bisectors of $\overline{Q}$ that pass through the origin.
%\end{lemma}

\section{Cubic, quadratic and linear bisector fields}

Since a bisector field is the set of bisectors of a quadrilateral, 
Corollary~\ref{switch} allows us to shift our focus  from quadrilaterals to bisector fields and attach to bisector fields the polynomial data developed in the last section for quadrilaterals.  

\begin{definition} \label{bis not} For each  bisector field $\B$, fix a quadrilateral $Q$ such that $\B = \Bis(Q)$.  The {\it shape polynomial for $\B$}
is the shape polynomial $\Phi$ for $Q$, the {\it position polynomial for $\B$}   is the position polynomial $\Phi$ for $Q$,  the {\it dual polynomial for $\B$}
 is  the {dual polynomial} for~$Q$, and the {\it dual curve of $\B$} is the dual curve of $Q$.
 % and the {bisector rational function} $\Upsilon$ for $Q$ as the {\it bisector rational function for $\B$}.
\end{definition}

For the polynomials in Definition~\ref{bis not},  
 different quadrilaterals in $\B$ can yield different polynomials but these polynomials are by Corollary~\ref{switch}  scalar multiples of those for $Q$. Ultimately,  we are interested in zeroes of these polynomials (such as in Theorem~\ref{cubic})  so uniqueness up to scalar multiple is all that is needed. 
 
 The polynomials $\Psi$ and $\Phi$ may share common factors (see Lemma~\ref{zeroes}) and hence the dual polynomial from Theorem~\ref{cubic} may not be irreducible. We remove these factors in the formulation of the reduced dual polynomial, defined next. 
This irreducible polynomial is needed later to distinguish between moving bisectors and bisectors in a pencil of parallel lines. 
 As discussed in the
  last section, $\Phi$ is never the zero polynomial. However, it can happen that $\Psi = 0$ (see Lemma~\ref{orthogonal}), and this case matters in the next definition. 
  
  %which we will see encodes all the essential information of a bisector field.   

      \begin{definition} \label{dual rational function def} Let $\B$ be a bisector field with shape and position polynomials $\Phi$ and $\Psi$.  
      If $\Psi \ne 0$, 
       let 
 $\phi$ and $\psi$ be homogeneous relatively prime polynomials in 
 ${\k}[T,U]$   such that $ \Phi \psi = \Psi \phi$ and $\phi$ is monic in $T$; otherwise, if $\Psi =0$, set $\psi(T,U) =0$ and $\phi(T,U)=1$. 
We define the {\it reduced dual polynomial} for $\B$ as
  $$F(T,U,V) = \psi(T,U)-V\phi(T,U).$$
  % \:\: {\text{ and }} \:\:  \Upsilon(T,U) = \frac{\psi(T,U)}{\phi(T,U)}.
%$$
The {\it reduced dual curve} of $\B$ is the zero set of $F$ in $\P^2(\k)$. 
  \end{definition}

%As with $\Psi$ and $\Phi$, we usually write  $\Upsilon_Q$ to indicate the dependence of $\Upsilon$ on the quadrilateral $Q$.  

The reduced dual polynomial $F$ is a factor of the dual polynomial. 
The degree of $F$, which is at most three, will be important in the next sections for describing the nature of the bisector field.

%We will see in the next section that   $F$  completely determines the  affine  bisectors of $Q$. 
%Note that the polynomials $\phi$ and $\psi$ depend on the choice of field $\k$. 

  \begin{definition} Let $\B$ be a bisector field, and let $F$ be the reduced dual polynomial for $\B$. Then $\B$ is {\it cubic}  if $\deg F =3$; {\it quadratic} if $\deg F=2$; and {\it linear} if $\deg F=1$. 
  \end{definition}
  
  A geometric interpretation of these classes of bisector fields is given in Theorem~\ref{new} and~Corollary~\ref{Bis cor}.

 We describe the properties of a special class of bisectors in the next lemma. 
Recall that a pencil of lines   is the set of lines through a point. This point can be at infinity, and so the set of all lines having a fixed slope is a pencil of parallel lines through a point at infinity.  

%The proof follows from applying the relevant definitions, and can be found in \cite[Corollary~3.5]{OWQuad}.

\begin{lemma}   \label{zeroes} Let $\B = \Bis(Q)$ be a bisector field, where $Q$ is a quadrilateral.  The following are equivalent for  a  bisector $\ell$ of $\B$. 
\begin{enumerate}
%\item[$(1)$] $\ell$ is parallel to a different bisector in $\B$.
\item[$(1)$] $\ell$ is parallel to another bisector in $\B$. 
\item[$(2)$] $\ell$ is parallel to, and distinct from, a side or diagonal of $Q$. 
\item[$(3)$] $\ell$ is parallel to two sides or diagonals of $Q$. 
\item[$(4)$] $\ell$ belongs to a pencil of parallel bisectors in $\B$.
 
\item[$(5)$] $\Phi(t_\ell,u_\ell) =0$. 
\item[$(6)$] $\Phi(t_\ell,u_\ell) = \Psi(t_\ell,u_\ell) =0$.

\end{enumerate} 
\end{lemma}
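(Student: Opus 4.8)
The plan is to prove the chain of equivalences by establishing a cycle of implications, relying heavily on Theorem~\ref{cubic} and the relations $t\Phi(t,u) = \sum t_L\Theta_L(t,u)$ and $u\Phi(t,u) = \sum u_L\Theta_L(t,u)$ from Definition~\ref{shape def}. The conceptual heart of the lemma is that $\Phi(t_\ell,u_\ell)=0$ detects exactly the slopes $[t_\ell:u_\ell]$ at which the midpoint computation degenerates, and these are precisely the slopes of the sides and diagonals of $Q$. So I would first clarify the connection between the vanishing of $\Phi$ and the slopes of sides and diagonals.

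First I would analyze condition $(5)$. The shape polynomial $\Phi(T,U) = \alpha T^2 - 2\beta TU + \gamma U^2$ is a binary quadratic form, hence (over $\overline{\k}$) factors into two linear forms whose roots in $\P^1(\k)$ are two slopes. I would identify these two slopes: since $\Phi$ is the quadratic form of the inner product $\langle-,-\rangle_Q$ from Lemma~\ref{sym}, its isotropic directions are exactly the self-$Q$-orthogonal slopes, and one checks that these coincide with the slopes of the two diagonals of $Q$ (equivalently, the two directions in which opposite sides meet at infinity get folded together). Thus $(5)$ holds if and only if $\ell$ has the slope of a diagonal or the slope of a pair of opposite sides — in either case $\ell$ is parallel to two among the four sides and two diagonals, giving $(5)\Leftrightarrow(3)$. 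The implications $(3)\Rightarrow(2)\Rightarrow(1)$ and $(3)\Rightarrow(4)\Rightarrow(1)$ are then essentially definitional once one observes that any line parallel to a side or diagonal, and distinct from it, is itself a bisector of $Q$ (a line parallel to a pair of opposite sides bisects trivially, and the whole pencil of such parallels consists of bisectors).

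The key analytic input is the equivalence $(1)\Leftrightarrow(5)\Leftrightarrow(6)$, which I would extract directly from the proof of Theorem~\ref{cubic}. In that proof, for a bisector $\ell$ not parallel to any side, the dual point satisfies $\Psi(t_\ell,u_\ell) = v_\ell\,\Phi(t_\ell,u_\ell)$. Now suppose $\ell$ is parallel to a second bisector $\ell'$ of the same slope $[t_\ell:u_\ell]=[t_{\ell'}:u_{\ell'}]$ but with $v_\ell \ne v_{\ell'}$; both lie on the dual curve, so both satisfy $\Psi = v\Phi$ at that common slope, and subtracting forces $(v_\ell - v_{\ell'})\Phi(t_\ell,u_\ell)=0$, hence $\Phi(t_\ell,u_\ell)=0$, and then $\Psi(t_\ell,u_\ell)=0$ as well. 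This gives $(1)\Rightarrow(6)$, and $(6)\Rightarrow(5)$ is trivial. Conversely, if $\Phi(t_\ell,u_\ell)=0$ then the dual-curve equation at that slope reads $\Psi(t_\ell,u_\ell) = v\cdot 0 = 0$ independently of $v$, so $\emph{every}$ value of $v$ with $\Psi(t_\ell,u_\ell)=0$ yields a bisector; this produces the full pencil of parallels in $(4)$ and in particular a second parallel bisector, giving $(5)\Rightarrow(4)\Rightarrow(1)$ and simultaneously $(5)\Rightarrow(6)$ (the vanishing of $\Psi$ at an isotropic slope).

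The step I expect to be the main obstacle is the identification in $(5)\Leftrightarrow(3)$ of the two roots of $\Phi$ with the diagonal and opposite-side directions, together with the careful handling of lines parallel to sides of $Q$ (the second, degenerate case in the proof of Theorem~\ref{cubic}), where the clean formula $\Psi = v\Phi$ is replaced by $\Phi(t_M,u_M)=\Theta_M(t_M,u_M)$. There I would argue that $\Theta_M(t_M,u_M)=0$ precisely when $M$ is parallel to another side, which is the bridge between $(3)$ and the vanishing of $\Phi$ in the side-parallel case; combining this with the generic case completes all implications. A minor point to check is that $\Phi\ne 0$ (guaranteed by the nondegeneracy in Lemma~\ref{sym}), so $\Phi$ has at most two isotropic slopes and the pencils in $(4)$ are genuinely constrained to at most two directions.
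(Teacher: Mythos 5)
Your use of Theorem~\ref{cubic} is correct where it applies: the arguments for $(1)\Rightarrow(6)\Rightarrow(5)$ and for $(5)\Rightarrow(4)\Rightarrow(1)$ (a bisector with $\Phi(t_\ell,u_\ell)=0$ forces $\Psi(t_\ell,u_\ell)=0$, and then every $v$ gives a point on the dual curve, hence a full pencil of parallel bisectors) are exactly right, and in fact more self-contained than the paper, which proves $(1)\Rightarrow(5)$ and $(5)\Rightarrow(6)$ the same way but outsources $(3)\Leftrightarrow(4)\Leftrightarrow(5)$ to \cite[Remark~2.6 and Lemma~5.3]{OWQuad}. The genuine gap is your bridge into condition $(3)$: the claim that the isotropic directions of $\Phi$ ``coincide with the slopes of the two diagonals of $Q$'' is false. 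Lemma~\ref{sym} says the two diagonals have slope vectors orthogonal \emph{to each other}, not to themselves; a diagonal's slope is isotropic only when the two diagonals are parallel to one another, and a side's slope only when the opposite sides are parallel. Concretely, for the bisector field of Figure~1(a) in standard form one has $\Phi(T,U)=T^2+U^2$ (coefficient $\mu=-1$), which has no zeros in $\P^1({\mathbb{R}})$ at all, yet any quadrilateral $Q$ in that field has diagonals with real slopes. Worse, if your identification were true, every diagonal of every quadrilateral would satisfy $(5)$, hence by your own $(5)\Rightarrow(4)$ every bisector field would contain a pencil of parallel lines, contradicting the existence of cubic bisector fields (Theorem~\ref{new}, Corollary~\ref{Bis cor}(1)). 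Your $\Theta_M$ observation (that $\Phi(t_M,u_M)=\Theta_M(t_M,u_M)$ vanishes exactly when the side $M$ is parallel to another side) is correct but only covers slopes of sides, not the diagonal case, so $(5)\Rightarrow(3)$ remains unproved.

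If you want a self-contained repair rather than the paper's citation, you can get $(4)\Rightarrow(3)$ from the machinery you already trust. Given $(4)$, every line of slope $[t_\ell:u_\ell]$ is a bisector of $Q=ABA'B'$. If that slope equals the slope of a side $A$, then a parallel bisector $m\ne A$ cannot cross $A$, so by the definition of a bisector of $Q$ it must be parallel to a pair of opposite sides, forcing $A\parallel A'$ and hence $(3)$. Otherwise, take the line $m$ of that slope through the vertex $A\cdot B$: it meets $A$ and $B$ at $A\cdot B$, so the condition $\mid_{AA'}(m)=\mid_{BB'}(m)$ forces $m$ to meet $A'$ and $B'$ at one common point, which must be $A'\cdot B'$; thus $m$ is a diagonal of $Q$. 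Running the same argument through the vertex $B\cdot A'$ shows the other diagonal also has slope $[t_\ell:u_\ell]$, so $\ell$ is parallel to both diagonals and $(3)$ holds. With that implication in place, your remaining steps $(3)\Rightarrow(2)\Rightarrow(1)$ close the cycle.
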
 

\begin{proof} The equivalence of (3), (4) and (5)  
 follows from \cite[Remark~2.6 and Lemma~5.3]{OWQuad}.
  That 
(2) implies (1) is clear since sides and diagonals of $Q$ are bisectors. To see that (1)  implies (5), assume the bisector $\ell$ is parallel to a different bisector $\ell'$. In this case,
$[t_\ell:u_\ell]=[t_{\ell'}:u_{\ell'}]$ and 
 $v_\ell \ne v_{\ell'}$, and so  Theorem~\ref{cubic} implies $\Phi(t_\ell,u_\ell) = 0$. That (3) implies (2) is clear, so   (1)--(5) are equivalent. 
% 
%  To see that (5) implies (4), 
%note that if $\Phi(t_\ell,u_\ell) =0$, then by Theorem~\ref{cubic}, $\Psi(t_{\ell},u_{\ell}) = 0$ and every line parallel to $\ell$ is a bisector of $Q$. 
To see that (5) implies (6), 
%observe that  if 
 % $\Phi(t_\ell,u_\ell)=0$, then since $\ell$ is a bisector, 
 use Theorem~\ref{cubic}. That (6) implies (5) is clear. \end{proof}

 \begin{definition}
A  bisector in $\B$ of a quadrilateral $Q$ is {\it null} if it satisfies the equivalent conditions of Lemma~\ref{zeroes}.
\end{definition}

%The terminology   is due the fact that  the slopes of these bisectors are null vectors for the quadratic form $\Phi$ associated to the bisector field. 
Since the inner product in Lemma~\ref{sym} has $\Phi$ as its squared norm, the null bisectors of $Q$ are the bisectors that are $Q$-orthogonal to themselves.

\begin{theorem} \label{new} 
A bisector field is linear if and only if it contains exactly two pencils of parallel lines; it is quadratic if and only if it contains exactly one   pencil of parallel lines; and it is cubic if and only if it contains no   pencils of parallel lines. 
\end{theorem}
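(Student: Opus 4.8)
The plan is to characterize the pencils of parallel lines in a bisector field in terms of the common zeroes of $\Phi$ and $\Psi$, and then translate this information into statements about the degree of the reduced dual polynomial $F = \psi - V\phi$. The key observation is Lemma~\ref{zeroes}: a bisector $\ell$ lies in a pencil of parallel bisectors precisely when it is null, that is, when $\Phi(t_\ell, u_\ell) = \Psi(t_\ell, u_\ell) = 0$. So I would first argue that the pencils of parallel lines in $\B$ correspond bijectively to the common projective zeroes $[t:u] \in \P^1(\k)$ of $\Phi$ and $\Psi$. Indeed, by Theorem~\ref{cubic} and Lemma~\ref{zeroes}, a slope $[t:u]$ supports a full pencil of parallel bisectors exactly when $\Phi(t,u) = \Psi(t,u) = 0$, since in that case every line $tX - uY + v = 0$ (any $v$) sits on the dual curve.

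Next I would connect the number of such common zeroes to $\deg F$. Recall that $\phi$ and $\psi$ are relatively prime homogeneous polynomials with $\Phi \psi = \Psi \phi$; concretely, $\phi = \Phi / \gcd(\Phi, \Psi)$ and $\psi = \Psi / \gcd(\Phi, \Psi)$. Since $\Phi$ is a nonzero quadratic form (never identically zero, as noted before Lemma~\ref{zeroes}), writing $g = \gcd(\Phi, \Psi)$ gives $\deg g \in \{0, 1, 2\}$, and the common projective zeroes of $\Phi$ and $\Psi$ are exactly the zeroes of $g$. I would compute $\deg F$ from the structure of $F = \psi - V\phi$: because $\phi$ is monic in $T$ of degree $2 - \deg g$ and the $V\phi$ term contributes a pure $V \cdot T^{2-\deg g}$ monomial, we get $\deg F = (2 - \deg g) + 1 = 3 - \deg g$ whenever $\Psi \neq 0$, and $\deg F = 1$ in the degenerate case $\Psi = 0$ where $\phi = 1, \psi = 0$. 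Thus
\begin{align*}
\deg F = 3 &\iff \deg g = 0 \iff \Phi, \Psi \text{ share no common zero},\\
\deg F = 2 &\iff \deg g = 1 \iff \Phi, \Psi \text{ share exactly one common zero (counted projectively)},\\
\deg F = 1 &\iff \deg g = 2 \iff \Phi, \Psi \text{ share both zeroes of } \Phi, \text{ or } \Psi = 0.
\end{align*}
Combining this with the correspondence from the first step yields the theorem: cubic means no common zero, hence no null slope, hence no pencil of parallel bisectors; quadratic means exactly one common zero, hence exactly one pencil; linear means two common zeroes, hence exactly two pencils.

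The main obstacle I anticipate is the bookkeeping around multiplicities and the field $\k$, since ``exactly two pencils'' should be interpreted as two distinct slopes. I would need to argue that when $\deg g = 2$ the quadratic form $\Phi$ factors into two distinct linear factors over $\k$ (giving two genuinely different parallel classes) rather than a repeated factor; here the nondegeneracy from Lemma~\ref{sym}, which forces $\Phi$ to have nonzero discriminant and hence distinct roots, should rule out a double root and guarantee the two slopes are distinct and defined over $\k$. A secondary subtlety is the degenerate case $\Psi = 0$: I would verify directly that this case is genuinely linear with two parallel pencils by examining the geometry (the bisector field of Figure~1(d)), or by noting that $\Psi = 0$ forces the centroid configuration handled in the earlier cited Lemma on orthogonality, so that the two null slopes still correspond to two parallel pencils. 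Once these edge cases are dispatched, the degree computation and the zero-counting bijection close the argument cleanly.
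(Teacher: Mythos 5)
Your proof follows the same route as the paper's: both identify the pencils of parallel lines in $\B$ with the common zeroes of $\Phi$ and $\Psi$ in $\P^1(\k)$ (via Theorem~\ref{cubic} and Lemma~\ref{zeroes}), and then convert the theorem into the statement that the number of such common zeroes equals $3-\deg F$, which is handled by gcd bookkeeping; your computation $\deg F = 3-\deg g$, where $g=\gcd(\Phi,\Psi)$, is correct and is in fact more explicit than the paper's one-sentence assertion of the count. The gap lies at the single point where that count is delicate, the case $\deg g = 2$, and the justification you give there is false: you assert that nondegeneracy of the form in Lemma~\ref{sym}, i.e.\ the nonvanishing of the discriminant of $\Phi$, guarantees that the two zeroes of $\Phi$ are ``distinct and defined over $\k$.'' A nonzero discriminant gives two distinct zeroes over $\overline{\k}$, but says nothing about $\k$-rationality: the form $T^2-\mu U^2$ with $\mu$ a nonsquare (for instance $T^2+U^2$ over the reals) is nondegenerate and has no zeroes in $\P^1(\k)$ at all. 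If such a $\Phi$ could occur with $\Phi$ dividing $\Psi$, or with $\Psi=0$, then one would have $\deg F=1$ while $\B$ contained no pencil of parallel lines, and the ``linear'' clause of the theorem would be false. So the $\k$-rationality of the two null slopes is exactly the content that must be proved here, and it requires geometric input rather than linear algebra. (Your fallback remark for the case $\Psi=0$, that the two null slopes ``still correspond to two parallel pencils,'' presupposes that there are two null slopes in $\P^1(\k)$, which is again the point at issue.)

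Here is a correct way to close the case $\deg g=2$. The two zeroes of $\Phi$ in $\P^1(\overline{\k})$ are distinct by nondegeneracy and are zeroes of $\Psi$ as well. Extend $Q$ to the quadrilateral $\overline{Q}$ over $\overline{\k}$; it has the same polynomials $\Phi,\Psi$, so by Theorem~\ref{cubic} every line over $\overline{\k}$ whose slope is one of these two zeroes is a bisector of $\overline{Q}$. In particular each of the two slopes carries two distinct parallel bisectors, so Lemma~\ref{zeroes} (condition $(1)$ implies condition $(2)$) applied to $\Bis(\overline{Q})$ shows that each of the two slopes is the slope of a side or diagonal of $\overline{Q}$. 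Sides and diagonals of $Q$ are lines defined over $\k$ (the vertices are $\k$-points because adjacent sides are nonparallel $\k$-lines), so both zeroes of $\Phi$ lie in $\P^1(\k)$, which is what was needed. Alternatively, one can transform $\B$ into standard form, after checking that this preserves both the pencil count and $\deg g$, and then use Lemma~\ref{orthogonal}: there $\Phi=T^2-\mu U^2$ and $\Psi=4TU(kT+\mu hU)$, so $\deg g=2$ forces $(h,k)=(0,0)$, and the centroid equations in the proof of Lemma~\ref{orthogonal} then force $t_B=t_{B'}$ for any quadrilateral $ABA'B'$ in $\B$ having the axes as the pair $A,A'$ (otherwise all four sides would pass through the origin); hence $\mu=t_B^2$ is a square in $\k$ and $\Phi$ splits over $\k$ into distinct factors. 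For what it is worth, the paper's own proof states the zero count without comment, so your main line does match the paper; but the reason you supply at the delicate step is wrong and must be replaced by an argument such as the above.
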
 

\begin{proof} Let $\B$ be a bisector field, and let $F$ be the reduced dual polynomial of $\B$. By Theorem~\ref{cubic} and Lemma~\ref{zeroes}, if $[t:u] \in \P^1(\k)$, then there is a pencil of parallel lines in $\B$ of slope $[t:u]$   if and only if $\Phi(t,u)=$ $\Psi(t,u) = 0$. Thus to 
 prove the theorem, it suffices   to show that
the number of mutual zeroes of $\Phi$ and $\Psi$ in $\P^1(\k)$  is  $3-\deg F$.
By the choice of $\phi$, $2-\deg \phi$ is the number of mutual zeroes of $\Phi$ and $\Psi$ in $\P^1(\k)$.  
Thus, since
$\deg F = 1 + \deg \phi$, we have that $3-\deg F = 2-\deg \phi$ is the number of mutual zeroes of $\Phi$ and $\Psi$, which proves the theorem.  
\end{proof}

Theorem~\ref{new} implies that
 in Figure~1, the bisector fields in (a) and (b) are cubic bisector fields; the bisector field in (c) is quadratic; and the bisector field in (d) is linear. 

%\begin{figure}[h] \label{diagonalspar}
% \begin{center}
 %%%%%%%%\includegraphics[width=0.9\textwidth,scale=.09]{DeltoidCardiod_Fields.png} 
 %%%%%%%%%\includegraphics[scale=.09]{TwoPairsConeLinesAnti.jpg}
% \end{center}
% \caption{The figure at left is a cubic bisector field, and hence has no pencils of parallel lines. The middle figure is a quadratic bisector field and the figure at right is a linear bisector field. The gray lines are the pencil(s) of the bisector field. The black lines are the moving line of the bisector field; see Definition~\ref{moving}.}
%\end{figure}

\begin{corollary} \label{Bis cor} Let $\B$ be a bisector field. 
\begin{enumerate} 
\item[$(1)$]  $\B$ is cubic if and only if
$\B$ is the bisector field of a quadrilateral  with no parallel sides or diagonals. 
\item[$(2)$] 
$\B$ is quadratic if and only if  $\B$ is the bisector field of a  trapezoid   that is not a parallelogram.
\item[$(3)$] $\B$ is linear if and only if $\B$ is the bisector field of a   parallelogram.
\end{enumerate}
\end{corollary}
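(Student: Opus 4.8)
The plan is to reduce all three equivalences to counting the slopes that carry a pencil of parallel bisectors, since Theorem~\ref{new} already ties that count to the three classes. By the proof of Theorem~\ref{new} together with Lemma~\ref{zeroes}, for any quadrilateral $Q$ with $\B=\Bis(Q)$ the slopes $[t:u]\in\P^1(\k)$ at which $\Phi(t,u)=\Psi(t,u)=0$ are exactly the slopes carrying a pencil of parallel bisectors, and these are exactly the slopes at which two sides or diagonals of $Q$ are parallel; write $N$ for this number, so that $\B$ is cubic, quadratic, or linear according as $N=0,1,2$. Because $\Phi$ and $\Psi$ are determined by $\B$ up to a common scalar (Corollary~\ref{switch}), $N$ is an invariant of $\B$, independent of the representative $Q$. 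The one extra ingredient I would isolate at the start is that \emph{null bisectors occur in parallel $\B$-pairs}: since $\Phi$ is the squared norm of $\langle-,-\rangle_Q$ (Lemma~\ref{sym}), a null bisector has an isotropic slope vector, and in a two-dimensional nondegenerate form the orthogonal complement of an isotropic vector is its own span; as the $\B$-partner of a bisector is $Q$-orthogonal to it, the partner of a null bisector is parallel to it. Thus each null slope supplies a pair of distinct parallel lines that is a $\B$-pair, and so can serve as a pair of opposite sides of a quadrilateral in $\B$.

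With these preliminaries, (1) and (3) should fall out directly. For (1), $\B$ is cubic iff $N=0$, i.e. iff every representative $Q$ has no pair of parallel sides or diagonals; conversely any single representative with no parallel sides or diagonals forces $N=0$, so both directions hold. For (3), the forward direction takes the two distinct null slopes, produces two parallel $\B$-pairs $\{A,A'\}$ and $\{B,B'\}$ of different slopes, and forms the parallelogram $P=ABA'B'$, whose opposite sides are $\B$-pairs, so $\B=\Bis(P)$ by Theorem~\ref{ultimate}. For the converse, a parallelogram has its two distinct pairs of parallel opposite sides as parallel bisectors, giving $N\ge 2$; since $\Phi$ is a binary quadratic it has at most two zeros, so $N=2$ and $\B$ is linear (one need not even check that the diagonals contribute no extra zero, once $N\le 2$ is invoked).

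Part (2) is where the real work lies. The direction ($\Rightarrow$) is a construction: if $N=1$, take the parallel $\B$-pair $\{A,A'\}$ from the unique null slope together with any $\B$-pair $\{B,B'\}$ of non-null slope (such a pair exists among the opposite sides and diagonals of any representative, and its two lines are non-parallel since only the null slope yields parallel partners), and set $Q=ABA'B'$. This is a valid trapezoid that is not a parallelogram, it lies in $\B$, and because $N=1$ is intrinsic to $\B$, this $Q$ has exactly one pair of parallel sides or diagonals, so its diagonals are not parallel and no side is parallel to a diagonal. The hard part will be the converse, and I expect the genuine obstacle there: being $\Bis(Q)$ for a trapezoid-that-is-not-a-parallelogram guarantees only $N\ge 1$, and a trapezoid \emph{can} have parallel diagonals, in which case $N=2$ and the field is in fact linear—such a trapezoid being merely an alternative representation of a parallelogram's bisector field. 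The clean way to close the gap is to read (2) against the trichotomy established by Theorem~\ref{new}: $\B$ is quadratic iff it is neither cubic nor linear, and by (1) and (3) this is exactly the condition of having a trapezoid-not-parallelogram representative while having no parallelogram representative (the excluded case $N=2$). I would therefore finish (2) by combining the construction above with the representative-independence of $N$, noting that a quadratic field's trapezoid representative necessarily has non-parallel diagonals, whereas a trapezoid with parallel diagonals falls under case (3).
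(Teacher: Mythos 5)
Your treatment of (1), (3), and the forward half of (2) matches the paper's own proof: the paper also runs everything through Lemma~\ref{zeroes} and Theorem~\ref{new}, builds the trapezoid in (2) from a parallel $\B$-pair $\{A,A'\}$ together with a non-null pair $\{B,B'\}$, and invokes Theorem~\ref{ultimate} to conclude $\B=\Bis(Q)$. Your preliminary observation that a null bisector's partner must be parallel to it (isotropic vectors in a nondegenerate binary form are orthogonal only to their own span) is not wasted effort either: the paper uses exactly this argument, but only later, in the proof of Theorem~\ref{parabola theorem}, while in the corollary it silently asserts the existence of a parallel $\B$-pair ``by Theorem~\ref{new}''; making the step explicit is an improvement, not a detour.

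Where you diverge is the converse of (2), and your suspicion is justified. The paper's proof dismisses that direction in one sentence --- ``Lemma~\ref{zeroes} implies that $\B$ contains exactly one pencil of parallel lines'' --- which tacitly assumes that a trapezoid which is not a parallelogram has non-parallel diagonals. In the complete-quadrilateral framework of this paper that assumption can fail. Concretely, take $A:Y=0$, $B:X=0$, $A':Y=1$, $B':X-2Y+1=0$ with opposite pairs $\{A,A'\}$ and $\{B,B'\}$: no two adjacent sides are parallel, so this is a legitimate quadrilateral, and it is a trapezoid that is not a parallelogram, yet its diagonals are $Y=X$ and $Y=X+1$, which are parallel. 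Computing from Definition~\ref{shape def} gives $\Phi(T,U)=T(U-T)$ and $\Psi(T,U)=\tfrac{1}{2}TU(U-T)$, so $\phi=1$, $\psi=U/2$, and $F=\psi-V\phi$ has degree $1$: this $\Bis(Q)$ is \emph{linear} (indeed it is also the bisector field of the parallelogram formed by $A$, $A'$ and the two diagonals, consistently with (3)). So the literal ``if'' direction of (2) is false, the defect lies in the paper's statement and proof rather than in your argument, and your proposed repair --- requiring the trapezoid's diagonals to be non-parallel, or equivalently reading (2) through the trichotomy as ``$\B$ has a trapezoid-not-parallelogram representative and no parallelogram representative'' --- is the right way to close the gap.
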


\begin{proof} By Lemma~\ref{zeroes} and Theorem~\ref{new},  
 $\B$ is cubic if and only if  no bisector is null, and so (1) follows from Lemma~\ref{zeroes}.   
For (2), suppose $\B$ is quadratic. By Theorem~\ref{new}, there is  a $\B$-pair of distinct parallel bisectors $A$ and $A'$ in $\B$.   Let $B,B'$ be a $\B$-pair of non-null bisectors. Since $B$ and $B'$ are not parallel to each other or any other bisectors, $Q = ABA'B'$ is a trapezoid that is not a parallelogram, and by Theorem~\ref{ultimate}, $\B = \Bis(Q)$. For the converse,   Lemma~\ref{zeroes}  implies that $\B$ contains exactly one  pencil of parallel lines and so $\B$ is quadratic by 
Theorem~\ref{new}.  
The proof of statement (3) is similar to that of (2).  
%The equivalence of (1) and (2) follows from the fact that a bisector field $\B$ is linear if and only if $\Phi$ and $\Psi$ have exactly two shared zeroes in $\P^1(\k)$. Assuming (1), Lemma~\ref{orthogonal} implies $\Psi(T,U) = 0$ and so by Theorem~\ref{moving line} all affine bisectors meet at a point, verifying (3). Now assume (3), and let $Q$ be a quadrilateral in $\B$.  Statement (3) implies that the vertices of $Q$ are the vertices of a parallelogram \cite{OWQuad} and so (4) follows. Finally, assuming (4), statement (2) follows from \cite{OWQuad}. 
\end{proof}

By Lemma~\ref{zeroes}, each null bisector in a bisector field is in a pencil of parallel lines in that bisector field. Thus the set of null bisectors is either empty, a single pencil of parallel lines or a union of two pencils of parallel lines. 
Non-null bisectors have a more interesting description, and these will described   in Sections~6 and~7 with the notion of moving bisectors.

\section{Affine equivalence }

 Since affine transformations preserve midpoints and parallel lines, the image of a bisector field under such a transformation   is again a bisector field.  %whose pairing is induced by that of the original bisector field. 
Two bisector fields  $
\B_1$ and $\B_2$ are {\it affinely equivalent} if there is an affine transformation of the plane that carries the lines in  $\B_1$ onto the lines in $\B_2$. The pairing on the lines in $\B_1$ becomes a pairing on the lines in $\B_2$, and as discussed in Section~2 this is the only possible pairing on $\B_2$ for which $\B_2$ is a bisector field. We will use  affine transformations to reduce to bisector fields of a form in which the shape and position polynomials $\Phi$ and $\Psi$ become more tractable. 
This is helpful because the coefficients of  a typical polynomial in our context  needs to be treated as variables in the course of proofs and so these equations can involve large expressions. 
For example, in the general case, with the coefficients taken as variables, the position polynomial has fifteen variables, degree seven and too many terms to list reasonably. Lemma~\ref{orthogonal} shows that finding the right transformation for the bisector field greatly simplifies these expressions.

%The reductions in this section also allow for a description of the bisector field in terms of what we call its center (Definition~\ref{center def}) and coefficient (Definition~\ref{coeff def}). 

% \begin{definition} A bisector field $\B$ is {\it irreducible} if no lines in $\B$ are parallel to each other; $\B$ is {\it reducible} if in $\B$ is not irreducible and there is a line $\ell$ such that any pair of parallel lines in $\B$ are parallel also to $\ell$. The bisector field $\B$ is {\it linear} if it neither irreducible nor reducible. 
%\end{definition}. 

%\begin{definition} \label{normal} A quadrilateral $Q$ is in {\it normal form}
%if $Q = ABA'B'$, where $A$ is the line $Y=0$ and $A'$ is the line $X=0$. The quadrilateral $Q$ is in {\it orthogonal form} if also 
%$B$ and $B'$ do not intersect on either $A$ or $A'$, $B'$ does not got through the origin, and  $t_Bt_{B'} =-1$.  The bisector field $\B$ is in {\it normal form} (resp., {\it orthogonal form})  if $\B=\Bis(Q)$ for some quadrilateral $Q$ in normal (resp., orthogonal) form. 
%\end{definition} 

\begin{definition} \label{normal} A bisector field $\B$ is in {\it standard form} if the axes $X=0$ and $Y=0$ are a $\B$-pair of bisectors   in $\B$. 
\end{definition} 

%The bisector fields in Figure {\bf [ref]}\footnote{\bf Need  nonstandard picture but axial} are in standard form after the pair of perpendicular lines is  translated to the origin.

%\begin{lemma}  \label{nice1}   
 Every bisector field $\B$ is affinely equivalent to a bisector field in standard form. 
 %This is easy to see directly but it will follow also from Lemma~\ref{preserve}. 
%\end{lemma} 
%
%\begin{proof} 
%Let $\B$ be a bisector field. 
This is because there is a $\B$-pair of    lines $\ell$ and $\ell'$ in $\B$ that are not parallel, and so after a translation of the plane, we may assume $\ell$ and $\ell'$ meet at the origin. An invertible linear transformation carries $\ell$ onto the line $Y=0$ and $\ell'$ onto  $X=0$, and so the image of $\B$ under this transformation is a bisector field in standard form. We will use the following lemma often in what follows.
%\end{proof} 

%The only zeroes of $\Psi$ are rational. So being irreducible is an absolute notion. Under affine transformation, these zeroes remain rational. 

 \begin{lemma} \label{orthogonal} Let $\B$ be a bisector field in standard form with center $(h,k)$. 
 Then 
$t_\ell t_{\ell'}$ has the same value $\mu$ for all $\B$-pairs  of bisectors $\ell, \ell'$  distinct from the $X$ and $Y$-axes, and 
$$\Phi(T,U) = T^2-\mu U^2 \: {\mbox{ and }} \: \Psi(T,U) = 4TU(
kT+ \mu h U).$$

\end{lemma}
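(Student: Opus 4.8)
The plan is to work with an explicit quadrilateral adapted to the standard form and then read $\Phi$ and $\Psi$ off the formulas in Definition~\ref{shape def}. In standard form the axes $A\colon Y=0$ and $A'\colon X=0$ are a $\B$-pair, so $A=(0,1,0)$ and $A'=(1,0,0)$ in the coefficient notation of the paper. I would take a second $\B$-pair $B,B'$ and form the quadrilateral $Q=ABA'B'$ in $\B$, so that $\B=\Bis(Q)$ by Theorem~\ref{ultimate}; since adjacent sides of a quadrilateral cannot be parallel, $B,B'$ are parallel to neither axis, whence $u_B=u_{B'}=1$, say $B=(t_B,1,v_B)$ and $B'=(t_{B'},1,v_{B'})$. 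Two structural facts already foreshadow the answer: because the axes are bisectors, Theorem~\ref{cubic} gives $\Psi(0,1)=\Psi(1,0)=0$, so $TU$ divides $\Psi$; and because the axes are $Q$-orthogonal, the mixed term of the form of Lemma~\ref{sym} vanishes, forcing $\beta=0$ in $\Phi$.

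For $\Phi$, I would substitute the four coefficient vectors into the expressions for $\alpha,\beta,\gamma$ in Definition~\ref{shape def}; a direct computation gives $\alpha=u_Bu_{B'}=1$, $\beta=0$, and $\gamma=-t_Bt_{B'}$, so $\Phi(T,U)=T^2-t_Bt_{B'}U^2$. Nondegeneracy of the form in Lemma~\ref{sym} forces the $U^2$-coefficient to be nonzero, i.e.\ $t_Bt_{B'}\neq0$. To see that this product is independent of the chosen pair, I would note that any two completing pairs produce quadrilaterals in the same field $\B$, whose shape polynomials are scalar multiples by Corollary~\ref{switch}; as both have leading coefficient $1$, the scalar is $1$ and the products agree. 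Writing $\mu$ for this common value gives $\Phi=T^2-\mu U^2$ with $\mu\neq0$. Finally, since $\Phi(1,0)=1\neq0$ and $\Phi(0,1)=-\mu\neq0$, Lemma~\ref{zeroes} shows that neither axis direction is null; hence no bisector other than an axis is parallel to an axis, so \emph{every} $\B$-pair $\ell,\ell'$ distinct from the axes may serve as a completing pair, and the same computation yields $t_\ell t_{\ell'}=\mu$. This proves the first assertion.

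For $\Psi$, I would expand $\Psi=\sum_L v_L\Theta_L$ with the four $\Theta_L$ coming from the sides above. Since $v_A=v_{A'}=0$, only the $B$- and $B'$-terms survive, and collecting them gives $\Psi(T,U)=TU\bigl[(v_B+v_{B'})T-(v_Bt_{B'}+v_{B'}t_B)U\bigr]$. It remains to rewrite the two coefficients through the center. The crucial geometric input is that the axes, being a $\B$-pair, are $Q$-antipodal, so the midpoint of their two midpoints is the centroid of $Q$, i.e.\ the center $(h,k)$. Here the midpoint of the $X$-axis equals $\mid_{BB'}(A)$, the midpoint of $(-v_B/t_B,0)$ and $(-v_{B'}/t_{B'},0)$, while the midpoint of the $Y$-axis is the midpoint of $(0,v_B)$ and $(0,v_{B'})$; averaging these yields $k=\tfrac14(v_B+v_{B'})$ and $h=-\tfrac1{4\mu}(v_Bt_{B'}+v_{B'}t_B)$. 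Substituting $v_B+v_{B'}=4k$ and $v_Bt_{B'}+v_{B'}t_B=-4\mu h$ into the bracket gives $\Psi=4TU(kT+\mu hU)$, as claimed.

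The step I expect to be the genuine content is the last one: one must use the \emph{antipodality} of the axis pair—not merely its orthogonality—to locate $(h,k)$ as the midpoint of the two axis-midpoints and then match this against the coefficients of $\Psi$. The shape-polynomial calculation and the well-definedness of $\mu$ are routine, but the non-nullity of the two axis directions is what legitimizes the normalizations $u_B=u_{B'}=1$ and $\alpha=1$ for \emph{all} pairs, and a small separate remark is needed to guarantee that a completing pair $B,B'$ exists in the first place (clear for infinite $\k$; it follows from $\B$ being a genuine bisector field with the axes as a non-parallel pair).
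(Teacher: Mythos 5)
Your computations are correct and, in their main line, essentially reproduce the paper's own proof: the paper likewise fixes the quadrilateral $Q=ABA'B'$ with the axes as one pair, normalizes $u_B=u_{B'}=1$, obtains $\Phi=T^2-t_Bt_{B'}U^2$ and $\Psi=TU\bigl[(v_B+v_{B'})T-(v_Bt_{B'}+v_{B'}t_B)U\bigr]$, and closes with the centroid relations $4k=v_B+v_{B'}$, $-4\mu h=t_{B'}v_B+t_Bv_{B'}$. Your appeal to $Q$-antipodality is a legitimate substitute for the paper's direct centroid computation, and in fact amounts to the same arithmetic, since the midpoint of the two axis-midpoints is precisely the average of the four vertices of $Q$.

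The genuine gap is in your last step, where you promote $t_Bt_{B'}=\mu$ from completing pairs to \emph{all} $\B$-pairs distinct from the axes. Your mechanism is Corollary~\ref{switch} applied to the quadrilateral $A\ell A'\ell'$, and your non-nullity remark removes only one obstruction to forming that quadrilateral, namely parallelism of $\ell$ or $\ell'$ to an axis. The other obstruction is concurrency: by the paper's definition a quadrilateral may not have all four sides through one point, so $A\ell A'\ell'$ is not a quadrilateral when $\ell$ and $\ell'$ both pass through the origin, and Theorem~\ref{ultimate} and Corollary~\ref{switch} then say nothing about the pair. This case is not vacuous: in a \emph{linear} bisector field in standard form the center is the origin and $\B$ contains the entire pencil of lines through the origin, paired among themselves, so there are many $\B$-pairs $\ell,\ell'$ with both lines through the origin; your argument yields no conclusion for them, yet the lemma (and the later Definition of the coefficient) needs exactly this case. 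The paper's proof avoids the issue entirely: by Theorem~\ref{ultimate} the pairing on $\B$ is $Q$-pairing, so \emph{every} $\B$-pair is $Q$-orthogonal, which for the form with quadratic part $\Phi=T^2-\mu U^2$ reads $t_\ell t_{\ell'}-\mu u_\ell u_{\ell'}=0$; no auxiliary quadrilateral is needed. You can repair your proof by invoking this orthogonality relation (or any separate treatment of the concurrent case), but as written the first assertion is not established for linear bisector fields.
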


 \begin{proof} 
Let $A$ be the line $Y=0$ and $A'$  the line $X=0$, and let $B,B'$ be a $\B$-pair of distinct bisectors in $\B$, neither of which is parallel to $A$ or $A'$.     
Then $t_A=u_{A'}=v_A=v_{A'}=0$ and $u_A=t_{A'}=u_B=u_{B'}=1$. %Since $B$ and $B'$ are not parallel to $A$ or ${A'}$, we can also assume $u_B =u_{B'}=1$.  
Let $\mu =t_Bt_{B'}$.  
The vertices of the quadrilateral $Q = ABA'B'$ are 
 \begin{eqnarray} \label{vertices}
 A \cdot B = \left(-\frac{t_{B'}v_B}{\mu},0\right), \: B \cdot {A'} = (0,v_B), \: {A'} \cdot {B'} = (0,v_{B'}), \: {B'} \cdot A = \left(-\frac{t_Bv_{B'}}{\mu},0\right).
 \end{eqnarray}  Thus the centroid $(h,k)$ of $Q$, which is the center of $\B$, is given by the equations $-4h\mu = t_{B'}v_B + t_Bv_{B'}$ and $4k = v_B+v_{B'}$. 
 Substituting all this data into the equations for $\Psi$ and $\Phi$ from Section~\ref{phi and psi} yields $\Phi(T,U) = T^2-\mu U^2$ and $ \Psi(T,U) = 4TU(
kT+ \mu h U).$ In particular,  a $\B$-pair $\ell,\ell'$ of bisectors in $\B$, since these bisectors are $Q$-orthogonal and $\Phi(T,U)=T^2-\mu U^2$,  satisfies $t_\ell t_{\ell'} -\mu u_\ell u_{\ell'}=0$ (see \cite[Definition~2.3]{OWQuad}).

. If $\ell$ and $\ell'$ are not parallel to $A$ or $A'$, we have $u_\ell =u_{\ell'} = 1$ and hence $\mu=t_\ell t_{\ell'}$, and so the value $\mu$ is independent of the choice of $\ell$ and $\ell'$.  
 \end{proof}

The quantity $\mu$ in Lemma~\ref{orthogonal} is independent of the choice of $\B$-pair, as long as the pair is not the pair of axes. Because of this, we can make the following definition. 

\begin{definition}  \label{coeff def} The {\it coefficient}  of  a bisector field $\B$ in standard form is the product 
 $\mu = t_\ell t_{\ell'}$ of the slopes $t_\ell$ and $t_\ell'$ of any $\B$-pair $\ell,\ell'$  of bisectors in $\B$ not parallel to $X=0$ or $Y=0$. %We denote the coefficient by $\mu_\B$, or $\mu$ if the context is clear. 
\end{definition}

 The coefficient of the bisector field in Figure~1(a) is  $-1$ since all the $\B$-pairs in this bisector field are perpendicular. The coefficient of the bisector field in Figure~1(b) is $1$ since the lines in the $\B$-pairs are reflected about the line $Y=X$.  The next theorem is important for the classifications of bisector field in Section~8.
 %The coefficient of the bisector field in Figures~1(c) and~(d)  is {\bf [FIND THIS]}. 

 \begin{theorem} \label{equal} Two bisector fields in standard form are the same if and only if they have the same   center and the same coefficient.
 \end{theorem}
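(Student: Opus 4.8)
The plan is to prove both directions by leveraging Lemma~\ref{orthogonal}, which already gives explicit formulas for $\Phi$ and $\Psi$ in terms of the center $(h,k)$ and coefficient $\mu$. The forward direction is immediate: the center and coefficient are intrinsic invariants of a bisector field, so equal bisector fields trivially share them. The real content is the converse, and the key observation is that Lemma~\ref{orthogonal} shows $\Phi$ and $\Psi$ depend \emph{only} on $h$, $k$, and $\mu$. Thus if two bisector fields $\B_1,\B_2$ in standard form have the same center and the same coefficient, then $\Phi_{\B_1}=\Phi_{\B_2}$ and $\Psi_{\B_1}=\Psi_{\B_2}$ on the nose (with $\lambda=1$).

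First I would invoke Definition~\ref{bis not} to fix, for each $\B_i$, a quadrilateral $Q_i$ with $\B_i=\Bis(Q_i)$, whose shape and position polynomials are $\Phi_{\B_i}$ and $\Psi_{\B_i}$. Applying Lemma~\ref{orthogonal} to each (both are in standard form) yields
\begin{eqnarray*}
\Phi_{\B_i}(T,U) &=& T^2-\mu U^2,\\
\Psi_{\B_i}(T,U) &=& 4TU(kT+\mu h U),
\end{eqnarray*}
where $(h,k)$ and $\mu$ are the common center and coefficient. Hence $Q_1$ and $Q_2$ have identical shape and position polynomials, so by Corollary~\ref{switch} (with $\lambda=1$) they have the same dual polynomial, and therefore the same bisectors. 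By Theorem~\ref{ultimate} (or the remark preceding Corollary~\ref{switch}) having the same bisectors means $\B_1=\Bis(Q_1)=\Bis(Q_2)=\B_2$ as bisector fields, completing the converse.

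I expect the main obstacle to be a bookkeeping subtlety rather than a deep one: I must confirm that ``same center and same coefficient'' genuinely forces equality of the polynomials and not merely equality up to scalar. Because both fields are in standard form, the formulas in Lemma~\ref{orthogonal} are \emph{normalized} (the $T^2$ coefficient of $\Phi$ is exactly $1$), so the scalar ambiguity allowed in Corollary~\ref{switch} is pinned down to $\lambda=1$; this is precisely what rules out spurious scaling and makes the argument clean. The only other point requiring a line of care is that the coefficient $\mu$ is well defined independently of the chosen $\B$-pair, but this is exactly the conclusion of Lemma~\ref{orthogonal} and Definition~\ref{coeff def}, so it may be cited directly. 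No difficult computation is needed, since all the heavy algebra has already been absorbed into Lemma~\ref{orthogonal}.
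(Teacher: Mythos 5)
Your proof is correct and takes essentially the same route as the paper's, which likewise consists of combining the explicit formulas of Lemma~\ref{orthogonal} with the characterization in Corollary~\ref{switch}. Your extra bookkeeping (fixing quadrilaterals via Definition~\ref{bis not}, and observing that the normalization of $\Phi$ pins the scalar in Corollary~\ref{switch} down to $\lambda=1$) just makes explicit what the paper's two-line proof leaves implicit.
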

 
 \begin{proof} By Corollary~\ref{switch}, two bisector fields are the same if and only if they have the same shape and position polynomials up to scalar multiple. This fact 
  and Lemma~\ref{orthogonal} imply the theorem. 
 \end{proof}

 The theorem shows the center and coefficient uniquely determine a bisector field in standard form, and so when 
seeking to determine whether two bisector fields are affinely equivalent it is natural  to do so  using this data, as in the next lemma, which will be important in Section~8 as well as in the rest of this section.
 
  %Lemma~\ref{orthogonal} implies two   bisector fields  in standard form are the same if and only if they have the same center, the same coefficient and, as bisector fields,  they are either both cubic, both quadratic or both linear. See also Theorem~\ref{equal}. 

%When working over an algebraically closed field, we can reduce to an even nicer case by controlling the coefficient $\mu$ of the bisector field. 

%\begin{definition}   A bisector field $\B$ is in   {\it orthogonal form} if it is in standard form and has coefficient $-1$, i.e., each $\B$-pair of lines in $\B$ is orthogonal in the ordinary sense. 
%\end{definition} 

%The first bisector field in Figure {\bf [ref]}\footnote{\bf Refer to standard deltoid figure} is in orthogonal form.  Note that over an algebraically closed field it is possible for a pair of lines to be both parallel and orthogonal. Bisector fields in orthogonal form are studied in \cite{OWQuad}, but from this reference we need only the fact, implicit in \cite{OWQuad}, that over an algebraically closed field every bisector field can be put into orthogonal form. The next lemma shows how to deduce this from \cite[Proposition 4.6]{OWQuad}.

%This is true even if the lines are parallel. (Parallel orthogonal lines are possible, for example, in an algebraically closed field.) 

      \begin{lemma}  \label{preserve}   Let $\B$ be a bisector field in standard form with coefficient $\mu_1$, and let $0\ne \mu_2 \in \k$ such that $\mu_1\mu_2 $ is a square in $\k$. 
      %Then $\B$ is affinely equivalent  to a bisector field  in standard form 
% with coefficient $\mu_2$. 
If $\ell,\ell'$ is a $\B$-pair of bisectors that are not parallel, then there is an affine transformation of the plane  that sends the lines
   $\ell$ and $\ell'$   to the lines  $X=0$ and $Y=0$ and sends $\B$ to a bisector field in standard form with coefficient $\mu_2$.   
\end{lemma}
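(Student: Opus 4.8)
The plan is to realize the required map as a composite $\sigma=\delta\circ\tau$, where $\tau$ carries the pair $\ell,\ell'$ onto the coordinate axes and $\delta$ is a diagonal scaling fixing the axes that corrects the resulting coefficient to $\mu_2$. The conceptual point is that maps sending $\ell,\ell'$ to the axes can only produce coefficients lying in the single square class of $\mu_1$, while diagonal scalings move freely within, but never leave, that class; the hypothesis that $\mu_1\mu_2$ is a square says exactly that $\mu_2$ lies in this class. First I would reduce to the case where both $\ell$ and $\ell'$ have finite nonzero slope. Since $\mu_1$ is a product of two nonzero slopes it is nonzero, so $\Phi(1,0)=1\neq0$ and $\Phi(0,1)=-\mu_1\neq0$; hence by Lemma~\ref{zeroes} no line parallel to an axis other than the axes themselves can be a bisector (a second horizontal or vertical bisector would be parallel to, hence null with, $Y=0$ or $X=0$, forcing $\Phi(0,1)=0$ or $\Phi(1,0)=0$). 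As $\{X=0,Y=0\}$ is a $\B$-pair and pairings are unique \cite[Corollary~6.7]{OWQuad}, the only $\B$-pair meeting the axes is the axis pair. Thus if $\{\ell,\ell'\}\neq\{X=0,Y=0\}$ then, normalizing $u_\ell=u_{\ell'}=1$, the slopes $m=t_\ell$ and $m'=t_{\ell'}$ are finite, nonzero and distinct, with $mm'=\mu_1$ by Definition~\ref{coeff def}.

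Next I would build $\tau$. The lines $\ell,\ell'$ meet in a single point, so a translation to the origin followed by the invertible linear map $M$ with $M^{-1}=\begin{pmatrix}1 & 1\\ m' & m\end{pmatrix}$ (whose columns are the direction vectors of $\ell'$ and $\ell$) sends $\ell$ to $X=0$ and $\ell'$ to $Y=0$. Because affine maps take bisector fields to bisector fields and carry the pairing to the pairing, as noted at the beginning of this section, $\tau(\B)$ is in standard form; write $\mu'$ for its coefficient, so $\Phi_{\tau(\B)}=T^2-\mu'U^2$ by Lemma~\ref{orthogonal}. Since the shape polynomial depends only on slopes and is determined up to scalar (Corollary~\ref{switch}), the substitution induced by $M$ carries $T^2-\mu_1U^2$ to a scalar multiple of $(mT+m'U)^2-\mu_1(T+U)^2$. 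Its cross term $2(mm'-\mu_1)TU$ vanishes precisely because $mm'=\mu_1$, and collecting the remaining terms with $m'=\mu_1/m$ gives $\Phi_{\tau(\B)}\propto T^2-(\mu_1/m^2)U^2$; the leading coefficient $m^2-\mu_1$ is nonzero since $m\neq m'$. Hence $\mu'=\mu_1/m^2$, so $\mu_1\mu'=m'^2$ is a square. In the excluded case $\{\ell,\ell'\}=\{X=0,Y=0\}$ I take $\tau$ to be the identity, so $\mu'=\mu_1$ and again $\mu_1\mu'$ is a square.

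To finish, the scaling $\delta(x,y)=(x,cy)$ with $c\in\k^{*}$ fixes both axes and multiplies every slope by $c$, hence multiplies the coefficient of any standard-form field by $c^2$; so $\delta(\tau(\B))$ is in standard form (the axis pair is still a $\B$-pair) with coefficient $c^2\mu'$. It remains to solve $c^2=\mu_2/\mu'$ in $\k$, and this is where the hypothesis enters: from $\mu_1\mu_2$ and $\mu_1\mu'$ both squares it follows that $\mu'\mu_2=(\mu_1\mu')(\mu_1\mu_2)/\mu_1^2$, and therefore $\mu_2/\mu'$, is a square, so the required $c$ exists. Then $\sigma=\delta\circ\tau$ sends $\ell,\ell'$ to $X=0,Y=0$ and $\B$ to a standard-form bisector field of coefficient $\mu_2$, as desired.

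I expect the main obstacle to be the square-class bookkeeping that pins $\mu'$ to the class of $\mu_1$: the collapse of the substitution to $\mu'=\mu_1/m^2$ rests entirely on the vanishing of the cross term, i.e. on the identity $mm'=\mu_1$, and the preliminary step ruling out axis-parallel bisectors is exactly what makes that identity available. Everything else—the existence of $\tau$, the effect of $\delta$ on slopes, and the final arithmetic producing $c$—is routine.
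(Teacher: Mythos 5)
Your proposal is correct, and its skeleton is close to the paper's: both arguments produce an explicit invertible linear map (composed with a translation) carrying $\ell,\ell'$ to the axes, and then identify the coefficient of the image field by tracking how slopes of $\B$-pairs transform. The organizational difference is that the paper writes down one map, ${\mathcal{L}}(x,y)=(\mu_1\theta(tx-uy),\,-\mu_1ux+ty)$ with $\theta^2\mu_1\mu_2=1$ baked in, so the image has coefficient $\mu_2$ immediately; and because it computes in projective slope coordinates $[t:u]$, it needs no case split when $\ell$ or $\ell'$ is an axis and no reduction to finite nonzero slopes. Your factorization $\sigma=\delta\circ\tau$ separates the two jobs and makes the square-class bookkeeping explicit; the price is the preliminary reduction, which you carry out correctly (Lemma~\ref{zeroes} plus $\Phi(1,0)=1$, $\Phi(0,1)=-\mu_1\ne 0$ rules out axis-parallel bisectors other than the axes, and uniqueness of partners handles the case that $\ell$ or $\ell'$ is an axis). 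In fact the two constructions nearly coincide: the paper's ${\mathcal{L}}$ factors as the normalizing matrix $\begin{pmatrix} t & -u\\ -\mu_1u & t\end{pmatrix}$ followed by the diagonal scaling $\mathrm{diag}(\mu_1\theta,1)$, so the paper also normalizes and then scales, just in a single formula. The other difference is how the new coefficient is computed: the paper checks, by one polynomial identity valid for \emph{every} $\B$-pair simultaneously, that the transformed slopes satisfy the $\mu_2$-orthogonality relation, whereas you pull back the shape polynomial.

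That pullback is the one step you should tighten. The assertion that $\Phi_{\tau(\B)}$ is proportional to $\Phi_{\B}$ precomposed with the slope substitution induced by $M$ is true, but it does not follow merely from ``$\Phi$ depends only on slopes and is determined up to scalar'' (Corollary~\ref{switch} governs changing the quadrilateral, not changing coordinates). What it rests on is that affine maps carry the pairing of $\B$ to the pairing of $\tau(\B)$ (Section~5), and that pairs are orthogonal for the polarization of the shape polynomial (Lemma~\ref{orthogonal}); hence image pairs are orthogonal for the pulled-back form, the image of the pair $\ell,\ell'$ (namely the axes) kills its cross term, and any one further image pair fixes the ratio of its diagonal entries, forcing proportionality to $T^2-\mu'U^2$. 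Alternatively, and closest to the paper, bypass $\Phi$ entirely: for a $\B$-pair with slopes $s_1,s_2\notin\{m,m'\}$, the image slopes under $M$ are $(s_i-m')/(m-s_i)$, and using $s_1s_2=mm'=\mu_1$ their product collapses to $m'/m=\mu_1/m^2$. Either patch makes $\mu'=\mu_1/m^2$, hence $\mu_1\mu'=(m')^2$, rigorous; the rest of your argument --- that $\mu_2/\mu'$ is then a square, and that $\delta(x,y)=(x,cy)$ preserves standard form and multiplies the coefficient by $c^2$ --- is correct as written.
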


\begin{proof} Let ${\mathbb{B}}_1 = \B$, and let $\mu_1$ be the coefficient of $\B_1$.  
%Suppose first neither $\ell$ nor $\ell'$ is parallel to $X=0$. 
Write the slope of $\ell$ as $[t:u]$ and that of $\ell'$ as $[t':u']$. Then $tt' - \mu_1uu'=0$ since $\ell,\ell'$ is a $\B_1$-pair and $\mu_1$ is the coefficient of $\B_1$. Also, since $\ell$ and $\ell'$ are not parallel, $\ell$ is not null (Lemma~\ref{zeroes}), and so $\Phi(t,u)=t^2- \mu_1 u^2 \ne 0$ by Lemma~\ref{zeroes}. 
By assumption, there is $\theta \in \k$ such that $1=\theta^2  \mu_1 \mu_2$.  
Let ${\mathcal{L}}$ be the linear transformation of the plane given by  \begin{center} ${\mathcal{L}}(x,y) = (\mu_1\theta(tx-uy),-\mu_1ux+ty)$
for all $x,y \in k$.  \end{center}
  Since the determinant of this linear transformation is 
  $\mu_1\theta(t^2-\mu_1u^2)$ and no factor in this product is $0$, 
  the transformation ${\mathcal{L}}$ is invertible. Thus the  image    of $\B_1$ under ${\mathcal{L}}$ is a bisector field whose pairing is induced by that of $\B_1$.  
  
 To determine the slopes of the lines ${\mathcal{L}}(\ell)$ and ${\mathcal{L}}(\ell')$, observe that since   $tt' -\mu_1 uu' = 0$,  
  \begin{center}
  ${\mathcal{L}}(u,t) = (0,t^2-\mu_1 u^2)$  and   ${\mathcal{L}}(u',t') = (\mu_1\theta(  tu'-ut'),0).$
  \end{center}
Thus  ${\mathcal{L}}(\ell)$ is parallel to $X=0$ and ${\mathcal{L}}(\ell')$ is parallel to $Y=0$.  Let ${\mathcal{A}}$ be the affine transformation that is the composition of ${\mathcal{L}}$ with the translation that carries the intersection of ${\mathcal{L}}(\ell)$ and ${\mathcal{L}}(\ell')$ to the origin and hence carries these two lines to the lines $X=0$ and $Y=0$. Let $\B_2$ be the image of $\B_1$ under ${\mathcal{A}}$.  
Since $\ell,\ell'$ is a $\B_1$-pair,  the lines $X=0$ and $Y=0$ form a $\B_2$-pair of bisectors in $\B_2$. 
Thus  $\B_2$   is a bisector field in standard form.

It remains to  show $\B_2$   has coefficient $\mu_2$.  
 Let $\ell_1$ and $\ell_2$ be a $\B_1$-pair of  bisectors in~$\B_1$, and let $[t_1:u_1]$ and $[t_2:u_2]$ be the slopes of these two lines.
 For each $i=1,2$, let $\upsilon_i = \mu_1 \theta (tu_i-ut_i)$ and 
 $\tau_i = -uu_i \mu_1 +tt_i$. 
Then   
${\mathcal{L}}(u_i,t_i) = (\upsilon_i,\tau_i),$ and so ${\mathcal{L}}(\ell_i)$ has slope $[\tau_i:\upsilon_i]$.  
Now $t_1t_2-\mu_1u_1u_2=0$ since $\ell_1,\ell_2$ is a $\B_1$-pair of bisectors in $\B_1$. Using this fact, along with  
 fact that $\mu_1\mu_2\theta^2=1$, a calculation shows
%\begin{eqnarray*}
$$\tau_1\tau_2-\mu_2 \upsilon_1\upsilon_2  = 
% (-uu_1 \mu_1 +tt_1)(-uu_2 \mu_1 +tt_2)- \mu_2\mu_1 \theta (tu_1-ut_1)\mu_1 \theta (tu_2-ut_2) \\
%&=&  (-uu_1 \mu_1 +tt_1)(-uu_2 \mu_1 +tt_2)- \mu_1(tu_1-ut_1) (tu_2-ut_2) \\
%&=& 
(\mu_1u^2-t^2)(\mu_1u_1u_2-t_1t_2)=0.$$
%\end{eqnarray*}
Since this is the case for all $\B_1$-pairs $\ell_1,\ell_2$, it follows that the coefficient of $\B_2$ is $\mu_2$.  
%
%Now ${\mathcal{L}}(u,t) = (0,t^2-\mu_1 u^2)$, and since $tt' -\mu_1 uu' = 0$, we have  ${\mathcal{L}}(u',t') = (\mu_1\theta(-\mu_1 tu'+ut')t^{-1},0),$ it follows that  ${\mathcal{L}}(\ell)$ is parallel to $X=0$ and ${\mathcal{L}}(\ell')$ is parallel to $Y=0$. Thus the image $\B_2$ of $\B_1$ is a bisector field in standard form. 
%
%Next, since the lines $Y=0$ and $X=0$ form a $\B_1$-pair in $\B_1$, their images under ${\mathcal{L}}$ form a $\B_2$-pair in $\B_2$. Now 
%${\mathcal{L}}(1,0) = (\mu_1\theta t,-\mu_1 u), {\mathcal{L}}(0,1) = (-\mu_1\theta u,t),$ and so since
%$$(-\mu_1u)t -(\mu_1\theta t) (-\mu_1\theta u)\mu_2 = \mu_1 u t (\mu_1\mu_2\theta^2-1)=0,$$
%it follows that $\B_2$ has coefficient $\mu_2$. 
\end{proof}

The following theorem gives a simple  criterion for when a bisector field in standard form  can be transformed into one with a specified coefficient. 

         \begin{theorem} \label{big question lemma}   Let $\B_1$ be a bisector field in standard form with coefficient $\mu_1$, and let $0 \ne \mu_2 \in \k$. Then
         $\mu_1\mu_2$ is a square in $\k$ if and only if 
        there is a bisector field in standard form with coefficient $\mu_2$ that is    affinely equivalent to $\B_1$. 
%         Let $\mu_1, \mu_2$ be nonzero elements of  $\k$. 
  %       A bisector field in standard form with coefficient $\mu_1$ is affinely equivalent to a bisector field in standard form with coefficient $\mu_2$    
     
%
 %        There exists a bisector field $\B_2$ affinely equivalent to $\B_1$ and in standard form with coefficient $\mu_2$      if and only if $\mu_1\mu_2$ is a square in $\k$. 
         \end{theorem}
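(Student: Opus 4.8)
The plan is to prove the two implications separately. The forward implication (square $\Rightarrow$ equivalence) is essentially a repackaging of Lemma~\ref{preserve}, while the reverse implication (equivalence $\Rightarrow$ square) is the real content and reduces to a discriminant computation for binary quadratic forms. For the forward direction, suppose $\mu_1\mu_2$ is a square. Since $\B_1$ is in standard form, the axes $X=0$ and $Y=0$ form a $\B_1$-pair of bisectors that are not parallel, so I would apply Lemma~\ref{preserve} to this pair and the given $\mu_2$. This produces an affine transformation carrying $\B_1$ onto a bisector field in standard form with coefficient $\mu_2$, and that image is affinely equivalent to $\B_1$ by construction, which is exactly what is required.

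For the reverse direction, assume $\B_2$ is in standard form with coefficient $\mu_2$ and that an affine transformation $\mathcal{A}$ carries the lines of $\B_1$ onto the lines of $\B_2$; I must deduce that $\mu_1\mu_2$ is a square. Fix a quadrilateral $Q_1$ with $\B_1=\Bis(Q_1)$. Since affine maps preserve midpoints and parallelism, $\mathcal{A}(Q_1)$ is a quadrilateral with $\Bis(\mathcal{A}(Q_1))=\B_2$, so $\mathcal{A}(Q_1)$ and $Q_2$ share the bisector field $\B_2$. By Corollary~\ref{switch} their shape polynomials agree up to a scalar: $\Phi_{\mathcal{A}(Q_1)}=\lambda\,\Phi_2$ for some $\lambda\in\k$. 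By Lemma~\ref{orthogonal} the two shape polynomials are $\Phi_1(T,U)=T^2-\mu_1U^2$ and $\Phi_2(T,U)=T^2-\mu_2U^2$.

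The crux is the transformation law for the shape polynomial under $\mathcal{A}$. Because $\Phi$ depends only on the slopes of the sides (Definition~\ref{shape def}) and is the quadratic form of the inner product of Lemma~\ref{sym}, the linear part of $\mathcal{A}$ induces an invertible matrix $N$ over $\k$ acting on slope vectors, and $\Phi_{\mathcal{A}(Q_1)}(T,U)=c\,\Phi_1\bigl(N^{-1}(T,U)^{\mathsf T}\bigr)$ for some scalar $c\in\k$. Combining this with $\Phi_{\mathcal{A}(Q_1)}=\lambda\,\Phi_2$ gives an identity of binary quadratic forms over $\k$, namely $\Phi_2=(c/\lambda)\,\Phi_1\circ N^{-1}$. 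Now I would compare discriminants: a linear substitution by $N^{-1}$ multiplies the discriminant by $(\det N)^{-2}$ and scaling by $c/\lambda$ multiplies it by $(c/\lambda)^2$, so from $\mathrm{disc}(\Phi_i)=4\mu_i$ I obtain $\mu_2/\mu_1=\bigl(c/(\lambda\det N)\bigr)^2$. Since $c,\lambda,\det N$ all lie in $\k$ (the entries of $N$, $\Phi_1$, $\Phi_2$ do), $\mu_2/\mu_1$ is a square in $\k$, hence $\mu_1\mu_2=\mu_1^2(\mu_2/\mu_1)$ is a square, as desired. Because this argument works at the level of a polynomial identity rather than evaluation at bisector slopes, it is insensitive to the size of $\k$ and applies even when $\k$ is finite.

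The main obstacle I anticipate is making the transformation law $\Phi_{\mathcal{A}(Q_1)}=c\,\Phi_1\circ N^{-1}$ precise. In particular, one must pin down the matrix $N$ correctly: there is a coordinate swap between the slope $[t:u]$, which is the argument of $\Phi$, and the direction vector $(u,t)$ on which the linear part of $\mathcal{A}$ acts, so that if $\mathcal{A}$ has linear part $\left(\begin{smallmatrix} a & b \\ c & d\end{smallmatrix}\right)$ then $N=\left(\begin{smallmatrix} d & c \\ b & a\end{smallmatrix}\right)$ and $\det N=\det\mathcal{A}\neq 0$. Once this is established together with the fact that the only freedom in the induced form is an overall nonzero scalar (so that the discriminant comparison is legitimate), the conclusion follows immediately. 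Conceptually, this step is the statement that $N^{\mathsf T}G_2N=\lambda G_1$ for the Gram matrices $G_i$ of $\Phi_i$, i.e.\ that two nondegenerate symmetric bilinear forms inducing the same $Q$-orthogonality relation are proportional; taking determinants of that identity is an equivalent route to the same conclusion.
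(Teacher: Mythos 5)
Your proposal is correct, and for the substantive direction (affine equivalence implies $\mu_1\mu_2$ is a square) it takes a genuinely different route from the paper's. The paper stays entirely inside its own toolkit: writing $g(x,y)=(ax+by+e,cx+dy+f)$, it uses the fact that the coordinate axes are a pair in both bisector fields to extract the two equations $cd-\mu_2ab=0$ and $ac-\mu_1bd=0$, and then runs a case analysis on vanishing coefficients ($b=c=0$, then $a=d=0$, then the mixed case) using $ad-bc\ne 0$; in the first case it evaluates the image slopes of an auxiliary $\B_1$-pair $B,B'$, and in the mixed case it passes through the algebraic closure to produce $\theta$ with $\mu_1\mu_2\theta^2=1$ and then argues $\theta\in\k$. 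You instead identify $\mu_i$, up to squares, with the discriminant of the shape polynomial and use invariance of the discriminant's square class: $\Phi_{\mathcal{A}(Q_1)}=\lambda\,\Phi_{Q_2}$ by Corollary~\ref{switch}, $\Phi_{\mathcal{A}(Q_1)}=c\,\Phi_{Q_1}\circ N^{-1}$ by the transformation law, hence $\mu_2/\mu_1=\bigl(c/(\lambda\det N)\bigr)^2$. This eliminates all casework and, as you note, is uniform in $\k$. The one ingredient not present in the paper is the transformation law itself, which you correctly single out as the point needing care; it is true and can be proved directly from Definition~\ref{shape def}: with $N=\left(\begin{smallmatrix} d & c\\ b & a\end{smallmatrix}\right)$ as you specify, each linear factor satisfies $u_{\mathcal{A}(L)}T-t_{\mathcal{A}(L)}U\propto u_LT'-t_LU'$ where $(T',U')=N^{-1}(T,U)$, so $\Theta_{\mathcal{A}(L)}\propto\Theta_L\circ N^{-1}$, and summing $\sum_L t_{\mathcal{A}(L)}\Theta_{\mathcal{A}(L)}$ and using the identity $dT'+cU'=T$ gives $T\Phi_{\mathcal{A}(Q_1)}\propto T\cdot\bigl(\Phi_{Q_1}\circ N^{-1}\bigr)$, with all per-side normalization scalars absorbed into $c$ (the Gram-matrix proportionality argument you sketch works too, but needs the uniqueness-up-to-scalar of the inner product, which is most easily gotten from Corollary~\ref{switch} anyway). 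In short: the paper's proof is self-contained but computational with several cases; yours requires writing out one extra structural lemma but is cleaner, avoids the excursion to $\overline{\k}$, and explains conceptually why the coefficient's square class is an affine invariant.
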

                
   %       Let $\B_1$ and $\B_2$ be   bisector fields in standard form
     %with centers $(h_1,k_1)$ and $(h_2,k_2)$ and 
    % with coefficients $\mu_1$ and~$\mu_2$, respectively.
 %    If $\B_1$ is affinely equivalent to $\B_2$, then
    %  $\mu_1\mu_2$ is a square   in $\k$.      \end{lemma}
     
         \begin{proof} 
     Suppose $g:\k^2 \rightarrow \k^2:(x,y)\mapsto (ax+by+e,cx+dy+f)$ is an affine transformation that sends $\B_1$ to  a bisector field $\B_2$ in standard form with coefficient $\mu_2$. 
     Then $g$ sends the line $Y=0$ to the line whose slope is $[c:a]$ and the line $X=0$ to the line whose slope is $[d:b]$. 
     Since the $X$ and $Y$-axes are a $\B_1$-pair of bisectors in $\B_1$, the images of these two lines form a $\B_2$-pair of bisectors in $\B_2$.  
     Thus $cd-\mu_2 ab =0$.  The line that maps onto the line $X=0$ is  
     $aX+bY+e=0$ and 
     has slope   $[a:-b]$. Similarly,  the line that maps onto $Y=0$ is $cX+dY+f=0$ and has slope $[c:-d]$.  These lines form a $\B_1$-pair of bisectors, so $ac-\mu_1bd=0$.   With the aim of showing $\mu_1\mu_2$ is a square in $\k$, we consider solutions $a,b,c,d$ to the equations $cd-\mu_2 ab =0$ and $ac-\mu_1bd=0$. We will use that fact that $ad -bc \ne 0$ since $g$ is invertible. 
     
 First suppose   $b = c =0$. In this case, $a \ne 0$ and $d \ne 0$ and 
 $g(x,y) = (ax+e,dy+f)$ for all $(x,y) \in \k^2$. 
 Let $B,B'$ be a $\B_1$-pair of bisectors in $\B_1$ that are not parallel to $X=0$ or $Y=0$. Then $u_B = u_{B'} =1$ and the image of $B$ under $g$ has slope $dt_B/a$ while the image of $B'$ has slope $dt_{B'}/a$. 
 This implies $d^2t_{B}t_{B'} = a^2\mu_2$ since the images of $B$ and $B'$ form a $\B_2$-pair. Thus  
   $d^2\mu_1 -a^2 \mu_2 = d^2t_Bt_{B'}-a^2\mu_2 =0$.    Since $d \ne 0$, this implies $\mu_1/\mu_2$ is a square in $\k$, and so  $\mu_1\mu_2$ is a square in $\k$ since $\mu_2 \in \k$.
A similar argument shows that  if $a =d =0$, then $\mu_1\mu_2$ is a square in $\k$. 

   Finally, suppose at least one of $b$ and $c$ is nonzero and at least one of $a$ and $d$ is nonzero. In this case, the equations $cd-\mu_2 ba =0$ and $ac-\mu_1bd=0$ imply
   there is an element $\theta$ of the algebraic closure of $\k$ such that $\mu_1\mu_2\theta^2 = 1$,  $a =\mu_1 \theta d$ and  $b =\theta c$. If $d =0$, then $c \ne 0$ since $ad -bc \ne 0$. In this case, since $\theta c =b \in \k$, we have $\theta \in \k$ and hence $\mu_1 \mu_2 = \theta^{-1} \in \k$. Similarly, if $c =0$, then $d \ne 0$ and $\mu_1 \mu_2=\theta^{-1} \in \k$.  The converse of the theorem follows from Lemma~\ref{preserve}.
   \end{proof}

     One motivation for the next definition is that the existence of a bisector through the center of  the bisector field allows for a significant simplification in calculations. This is because, as we will see later,  it is possible to take such a bisector field and transform it into one whose center is on the $Y$-axis, thus introducing an extra zero into the calculations.

\begin{definition} A bisector field    is {\it well centered}
    if there is a bisector   passing through its center.
    \end{definition}
    
    A linear or quadratic bisector field is always well centered. This is because in both of these cases, the bisector field contains a pencil of parallel lines and hence every point in the plane has a bisector through it. Thus the question of whether a bisector field is well centered is of consequence only for cubic bisector fields.  See Example~\ref{new ex} for an example of a bisector field that is not well centered.

    Since the center of a bisector field remains a center under affine transformation, whether a cubic bisector field is well centered is invariant under such transformations, and so we can reduce to the case of a cubic bisector field in standard form. In this case there is a straightforward criterion for when the bisector field is well centered.

    \begin{lemma} \label{well lemma} A  cubic bisector field in standard form with center $(h,k)$ and coefficient $\mu$  is well centered if and only if there is a zero in $\P^1(\k)$ of the polynomial
      \begin{eqnarray} \label{well} 
    hT^3 + 3kT^2U + 3h\mu T U^2
    +k \mu U^3.
    \end{eqnarray}
Moreover, every bisector field over the field $\k$ is well centered 
 if and only if 
the polynomial $T^3+3T^2 +3\mu T+\mu$ has a zero in $\k$ for every $\mu \in \k$.  
    \end{lemma}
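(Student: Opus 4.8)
The plan is to convert the geometric condition of well-centeredness into the solvability of an explicit cubic, by combining the dual-curve criterion of Theorem~\ref{cubic} with the normal forms for $\Phi$ and $\Psi$ furnished by Lemma~\ref{orthogonal}.

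For the first assertion I would parametrize the lines through the center. A line through $(h,k)$ with slope $[t:u]\in\P^1(\k)$ has equation $tX-uY+(uk-th)=0$, and every line through the center arises this way for a unique $[t:u]$. By Theorem~\ref{cubic} such a line is a bisector if and only if its dual point $[t:u:uk-th]$ lies on the dual curve, that is, if and only if $\Psi(t,u)-(uk-th)\Phi(t,u)=0$. Substituting $\Phi(T,U)=T^2-\mu U^2$ and $\Psi(T,U)=4TU(kT+\mu hU)$ from Lemma~\ref{orthogonal} and expanding, this expression equals $ht^3+3kt^2u+3h\mu tu^2+k\mu u^3$, i.e.\ the polynomial in~\eqref{well} evaluated at $(t,u)$. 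Hence there is a bisector through the center if and only if~\eqref{well} has a zero in $\P^1(\k)$; this is the first claim (and, incidentally, the argument does not use that $\B$ is cubic).

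For the second assertion I would first reduce to cubic standard form: well-centeredness is invariant under affine transformations and every bisector field is affinely equivalent to one in standard form, while linear and quadratic fields are always well centered, so ``every bisector field over $\k$ is well centered'' is equivalent to ``every cubic bisector field in standard form is well centered.'' By Theorem~\ref{equal} such a field is pinned down by its center $(h,k)$ and coefficient $\mu$, with $\mu\neq0$; it is cubic exactly when $\Phi$ and $\Psi$ are coprime, which by the argument of Theorem~\ref{new} fails only when $\mu h^2=k^2$. If $hk=0$ then~\eqref{well} factors with the obvious root $[1:0]$ (when $h=0$) or $[0:1]$ (when $k=0$), so such fields are automatically well centered, consistent with the center lying on a coordinate axis, which is itself a bisector. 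When $h,k\neq0$, since~\eqref{well} does not vanish at $[1:0]$, it has a zero in $\P^1(\k)$ if and only if $g(T):=hT^3+3kT^2+3h\mu T+k\mu$ has a root in $\k$; the substitution $T=(k/h)W$ gives $g((k/h)W)=(k^3/h^2)\,p_{\mu'}(W)$, where $p_{\mu'}(W)=W^3+3W^2+3\mu'W+\mu'$ and $\mu'=\mu h^2/k^2$. Since $W\mapsto(k/h)W$ is a bijection of $\k$, the field is well centered if and only if $p_{\mu h^2/k^2}$ has a root in $\k$.

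It remains to match quantifiers. Because $p_0(T)=T^2(T+3)$ and $p_1(T)=(T+1)^3$ always have roots in $\k$, the stated condition ``$p_\mu$ has a root for every $\mu\in\k$'' is equivalent to the same condition restricted to $\mu\in\k\setminus\{0,1\}$. For the backward direction, the previous paragraph shows every cubic standard-form field is well centered as soon as $p_{\mu h^2/k^2}$ has a root (and automatically when $hk=0$). For the forward direction, given $\mu_0\in\k\setminus\{0,1\}$ I would realize a cubic field in standard form with center $(1,1)$ and coefficient $\mu_0$: taking $A:Y=0$, $A':X=0$, and lines $B,B'$ with $u_B=u_{B'}=1$, $t_B=\mu_0$, $t_{B'}=1$, and intercepts solving $v_B+v_{B'}=4$, $v_B+\mu_0 v_{B'}=-4\mu_0$ (uniquely, since $\mu_0\neq1$), produces via the formulas of Lemma~\ref{orthogonal} exactly this field, which is cubic because $\mu_0\cdot1^2\neq1^2$ keeps $\Phi$ and $\Psi$ coprime. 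Its well-centeredness then yields a root of $p_{\mu_0}$, since here $\mu h^2/k^2=\mu_0$. I expect the main obstacle to be precisely this bookkeeping in the second half: recognizing that the two-parameter dependence on the center $(h,k)$ collapses, via $T=(k/h)W$, to dependence on the single quantity $\mu h^2/k^2$, and then handling the boundary cases $hk=0$ and $\mu\in\{0,1\}$ and exhibiting a field realizing each relevant coefficient. The first-half computation and the coprimality criterion are routine by comparison.
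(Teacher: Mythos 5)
Your proposal is correct, and its first half coincides with the paper's argument: both combine Theorem~\ref{cubic} with the normal forms $\Phi = T^2-\mu U^2$, $\Psi = 4TU(kT+\mu hU)$ of Lemma~\ref{orthogonal} to identify bisectors through $(h,k)$ with zeros of $(hT-kU)\Phi+\Psi$, which expands to $(\ref{well})$. In the second half you and the paper make the same reduction (affine invariance of well-centeredness, linear/quadratic fields automatically well centered, the case $hk=0$ automatic), but the execution differs: the paper rescales coordinates so that the center becomes $(1,1)$, tacitly replacing the coefficient $\mu$ by $\mu h^2/k^2$ while reusing the symbol $\mu$, whereas you keep the field fixed and substitute $T=(k/h)W$, which makes the collapse to the single parameter $\mu h^2/k^2$ explicit; these two devices are equivalent. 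The genuine added value of your version is the forward quantifier direction. The lemma claims that well-centeredness of \emph{all} bisector fields forces $T^3+3T^2+3\mu T+\mu$ to have a root for \emph{every} $\mu\in\k$, and this requires exhibiting, for each $\mu_0\notin\{0,1\}$, an actual cubic bisector field in standard form with center $(1,1)$ and coefficient $\mu_0$. The paper's proof never constructs one (existence of such fields is simply taken for granted, as in Example~\ref{new ex}), while your quadrilateral $ABA'B'$ with $t_B=\mu_0$, $t_{B'}=1$ and intercepts solving $v_B+v_{B'}=4$, $v_B+\mu_0 v_{B'}=-4\mu_0$ supplies it, and your checks that $\Phi$ and $\Psi$ remain coprime (so the field is indeed cubic) and that the excluded values are harmless, since $T^2(T+3)$ and $(T+1)^3$ always have roots, close the loop. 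So your route is the paper's route made watertight at exactly the point where the paper is most terse.
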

    
    \begin{proof} 
By Theorem~\ref{cubic} and Lemma~\ref{orthogonal}, 
      %Theorem~\ref{envelope theorem} and Corollary~\ref{cubic tangents}, 
 a cubic bisector field  $\B$ is well centered if and only if there is a zero in $\P^1(\k)$ of the polynomial $$hT(T^2-\mu U^2) -kU(T^2-\mu U^2) + 4TU(kT+\mu hU).$$ This polynomial is the same as that in $(\ref{well})$. 
%    Collecting terms, we have that $\B$ is well centered if and only if there is a zero in $\P^1(\k)$ of the homogeneous cubic 
  %  \begin{eqnarray} \label{well} 
  %  hT^3 + 3kT^2U + 3h\mu T U^2
%    +k \mu U^3
 %   \end{eqnarray}
 %\end{proof} 
%   \begin{lemma} \label{well lemma 2} Every bisector field over the field $\k$ is well centered 
% if and only if 
%the polynomial $T^3+3T^2 +3\mu T+\mu$ has a zero in $\k$ for every $\mu \in \k$.  \end{lemma} 
%\begin{proof} 
To prove the second assertion, let $\B$ be a bisector field. We can  assume $\B$ is a cubic bisector field in standard form with center  $(h,k)$, since linear and quadratic bisector fields are always well centered.   If $h =0$ or $k=0$, then $\B$ is well centered since the axes are in $\B$. Otherwise, if $h \ne 0$ and $k \ne 0$, then after a linear transformation that scales each coordinate we can assume $(h,k) = (1,1)$. (A bisector field in standard form remains in standard form when scaled.) By the first claim, $\B$ is well centered if and only if $T^3+3T^2 +3\mu T+\mu$ has a zero.
\end{proof}  

   Thus over an algebraically closed field every cubic bisector field is well centered. This holds true  over a real closed field  $\k$ also since every cubic over $\k$ has a root in $\k$. 
As another example, if $\k$ has characteristic $3$, then every bisector field is well centered if and only if $\k$ is closed under cube roots. 
%  For cubic bisector fields that are in standard form and well-centered, whether two such bisector fields are affinely equivalent is determined in a straightforward way from the coefficients of the bisector fields.

\begin{example} \label{new ex}
    Not all bisector fields are well-centered. Let $\k$ be the field with $7$ elements, and let $\B$ be the unique (cubic) bisector field in standard form with $h = k = 1$ and  $\mu =2$. The polynomial in Lemma~\ref{well lemma} below
%$hT^3 + 3kT^2U + 3h\mu T U^2
 %   +k \mu U^3 =
 has no zero in $\k$, and so $\B$ is not well centered.  Similarly, if $\k$ is the field of rational numbers and $\B$ is the unique  bisector field  in standard form with  $h=k=\mu=1$, then the polynomial in Lemma~\ref{well lemma} has one real root in $\P^1({\mathbb{R}})$ but no roots in $\P^1({\mathbb{Q}})$, and so $\B$ is not well centered. 
 \end{example}

 The next theorem shows that for well centered bisector fields, the geometric problem of classification of bisector fields up to affine equivalence is also an arithmetical one.  The difference between this theorem and Theorem~\ref{big question lemma} is that in the latter we could only assert equivalence with {\it some} bisector field having coefficient $\mu_2$, whereas here we get equivalence with {\it every} such bisector field.

          \begin{theorem} \label{big question} Let $\B_1$ and $\B_2$ be  well-centered cubic  bisector fields in standard form
     %with centers $(h_1,k_1)$ and $(h_2,k_2)$ and 
     with coefficients $\mu_1$ and~$\mu_2$, respectively.
     Then  $\B_1$ is affinely equivalent to $\B_2$ if and only if $\mu_1\mu_2$ is a square   in $\k$. 
     \end{theorem}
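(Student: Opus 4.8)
The forward implication requires essentially no work beyond Theorem~\ref{big question lemma}: since $\B_2$ is by hypothesis a bisector field in standard form with coefficient $\mu_2$ that is affinely equivalent to $\B_1$, Theorem~\ref{big question lemma} immediately yields that $\mu_1\mu_2$ is a square in $\k$ (and neither the cubic nor the well-centered hypothesis is needed here). So the content is the converse, and the plan is to use well-centeredness to remove the one remaining ambiguity that Theorem~\ref{big question lemma} leaves open, namely the location of the center. Concretely, I would reduce each $\B_i$ to a single canonical standard form, depending only on the common square class of $\mu_1,\mu_2$, and then invoke the uniqueness statement of Theorem~\ref{equal}.

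For the converse, assume $\mu_1\mu_2$ is a square. For each $i$ I would first exploit well-centeredness: choose a bisector $m_i$ through the center of $\B_i$, and let $m_i'$ be its $\B_i$-pair partner. Since $\B_i$ is cubic it contains no parallel bisectors (Theorem~\ref{new}), so $m_i,m_i'$ is a non-parallel $\B_i$-pair, and Lemma~\ref{preserve} applies to it. Taking the target coefficient to be $\mu_1$ (permissible because $\mu_1\mu_1$ and $\mu_2\mu_1$ are both squares), Lemma~\ref{preserve} produces a bisector field $\B_i'$ in standard form with coefficient $\mu_1$ in which $m_i$ is carried onto the line $X=0$. Because an affine map sends the center of $\B_i$ to the center of $\B_i'$ and $m_i$ passes through the center, the center of $\B_i'$ lies on $X=0$; thus $\B_i'$ has center $(0,k_i)$ for some $k_i\in\k$, and by Lemma~\ref{orthogonal} its position polynomial is $\Psi=4k_iT^2U$.

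The crux, and the step I expect to be the main obstacle, is to rule out $k_i=0$ (equivalently, to show the center is never the intersection point of a $\B$-pair). This is exactly where the cubic hypothesis enters. Affine transformations preserve parallelism, so $\B_i'$ is again cubic by Theorem~\ref{new}; but if $k_i=0$ then $\Psi=0$, and by Definition~\ref{dual rational function def} the reduced dual polynomial is $F=-V$, of degree $1$, forcing $\B_i'$ to be linear rather than cubic, a contradiction. Hence $k_i\ne 0$, and I would then apply the uniform scaling $(x,y)\mapsto (x/k_i,\,y/k_i)$, which fixes the axes and fixes every slope (so it keeps $\B_i'$ in standard form with coefficient $\mu_1$) while sending the center to $(0,1)$. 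Both $\B_1$ and $\B_2$ have thereby been shown affinely equivalent to a bisector field in standard form with center $(0,1)$ and coefficient $\mu_1$; since such a field is unique by Theorem~\ref{equal}, these two canonical fields coincide, and therefore $\B_1$ is affinely equivalent to $\B_2$.
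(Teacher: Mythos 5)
Your proof is correct and follows essentially the same route as the paper's: use well-centeredness to produce a non-parallel $\B_i$-pair through the center, apply Lemma~\ref{preserve} to reach standard form with center on the $Y$-axis, rule out a center at the origin via the cubic hypothesis (degree of the reduced dual polynomial), rescale to normalize the center, and conclude with Theorem~\ref{equal}. The only difference is organizational: you spend the hypothesis that $\mu_1\mu_2$ is a square inside the application of Lemma~\ref{preserve} (targeting coefficient $\mu_1$ for both fields, so the two canonical forms agree on the nose), whereas the paper lets each field keep its own coefficient and spends the hypothesis at the end through one extra explicit stretch $(x,y)\mapsto(\theta^{-1}x,y)$ converting coefficient $\mu_1$ into $\mu_2$ --- your variant spares that final computation.
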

     
         \begin{proof}  If $\B_1$ is affinely equivalent to $\B_2$, then $\mu_1\mu_2$ is a square by Theorem~\ref{big question lemma}. 
     Conversely, suppose  
 $\mu_1\mu_2$ is a square in $\k$. 
Since $\B_1$ is well-centered, there is a line $A$ through the center   of $\B_1$.  Let $A'$ be the bisector in $\B_1$ such that $A,A'$ is a $\B_1$-pair. Since $\B_1$ is a cubic bisector field, $\B_1$ has no null bisectors, and so $A$ and $A'$ are not parallel. 
 By Lemma~\ref{preserve}, $\B_1$ is  affinely equivalent to a bisector field in standard form having the same coefficient as $\B_1$ and whose center $(h,k)$ lies on the $Y$-axis. Thus $h=0$, and since~$\B_1$ is a cubic bisector field, Lemma~\ref{orthogonal}
   implies 
  the center of $\B_1$ is not the origin since otherwise the reduced dual curve of $\B_1$ will have degree $1$, contrary to  assumption. Thus $k \ne 0$.  
  Since uniformly scaling the plane does not change the coefficient of a bisector field, $\B_1$ is affinely equivalent to the  cubic bisector field  in standard form with center $(0,1/2)$ and coefficient $\mu_1$.  
 Applying this same reasoning to $\B_2$, we 
 can assume without loss of generality 
   that both $\B_1$ and $\B_2$   have center   $(0,1/2)$.

 The fact that $\mu_1\mu_2$ is a square in $\k$ implies  
    there is $\theta \in \k$ such that $\mu_2=\theta^2 \mu_1$.   
Define an affine transformation by     
     $g:\k^2 \rightarrow \k^2:(x,y)\mapsto (\theta^{-1} x,y).$ 
Then $g(0,1/2) = (0,1/2)$.  Let $B,B'$ be a $\B_1$-pair of bisectors in $\B_1$ that are not parallel to $X=0$ or $Y=0$.  Then $g(B),g(B')$ 
is a $g(\B_1)$-pair of bisectors in the bisector field $g(\B_1)$. The definition of the map $g$ shows neither line in this pair is 
 parallel  to $X=0$ or $Y=0$, so since $B,B'$ is a $\B_1$-pair and neither $B$ nor $B'$ is parallel to the $X$ or $Y$-axes, we have $\mu_1 = t_B t_{B'}$. 
Now $g(B)$ has slope $\theta t_B$ and $g(B')$ has slope $\theta t_{B'}$. Thus the coefficient of the bisector field $g(\B_1)$ that is the image of $\B_1$ is $ t_{g(B)} t_{g(B')} = \theta^{2} t_Bt_{B'} =\theta^{2} \mu_1=  \mu_2,$ and so the image of $\B_1$ has center $(0,1/2)$ and coefficient $\mu_2$. Since this image is in standard form (as $g$ sends the $X$ and $Y$-axes to themselves), we conclude from Theorem~\ref{equal}
   that the image of $\B_1$ is $\B_2$.  
     \end{proof}

\section{The boundary of a nonlinear bisector field} \label{envelope}

%Thus if $\B$ is a cubic bisector field, we expect three distinct reflexive bisectors through each point, while if $\B$ is quadratic we expect two. The boundary is where the expected value is not attained. 

% Let $\k$ be a field, let $f(T,U) \in \k[T,U]$ be a homogeneous  polynomial  of degree $n \geq 2$, and denote the roots of $f$ in $\P^1(\overline{\k})$ by  $[t_1:u_1],\ldots,[t_n:u_n]$.   The {\it $($homogeneous$)$ discriminant} of $f$ is  disc$(f) = \prod _{i<j}(t_iu_j-t_ju_i)^2$. 
 
% SQUARES? LEADING TERM?
 
% As a symmetric function, the discriminant has coefficients  in $\k$.  Clearly, the discriminant is $0$ 
%if and only if $f(T,U)$ has a repeated root in $\P^1(\overline{\k})$.

Let $\B$ be a bisector field, and let $\phi$ and $\psi$ be as in Defintion~\ref{dual rational function def}. Theorem~\ref{cubic} implies
the lines of the form 
\begin{eqnarray} \label{moving eq}
t\phi(t,u)X-u\phi(t,u)Y+\psi(t,u), \:\: {\mbox{ where }} [t:u] \in \P^1(\k) {\mbox{ and }} \phi(t,u) \ne 0,
\end{eqnarray}
are bisectors in $\B$. These lines  form a ``moving line'' in the sense of \cite{CSC}.    The terminology  is motivated by the fact that the map $[t:u] \mapsto t\phi(t,u)X-u\phi(t,u)X+\psi(t,u)$ from $\P^1(\k)$ to the set of lines in $\k^2$    produces a flow of   bisectors.   

\begin{definition} \label{moving} The {\it moving bisectors} in a bisector field   are the bisectors in $(\ref{moving eq})$. 
%of the form  $t\phi(t,u)X-U\phi(t,u)X+\psi(t,u)$, where $[t:u] \in \P^1(\k)$. 
\end{definition}

The dual points of the moving bisectors are the points on the reduced dual curve: 

\begin{proposition} \label{new moving bisector} 
A line $\ell:tX-uY+v$ is a moving bisector in a bisector field $\B$ if and only if $[t:u:v]$ is a zero of the reduced dual polynomial $F(T,U,V)=\psi(T,U)-V\phi(T,U)$ for $\B$.  
\end{proposition}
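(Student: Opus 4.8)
The plan is to prove the equivalence by simply unwinding the two definitions and matching them with the homogeneity of $\phi$ and $\psi$. Note first that $F(T,U,V)=\psi(T,U)-V\phi(T,U)$ is homogeneous (since $\phi,\psi$ are homogeneous with $\deg\psi=\deg\phi+1$ by Definition~\ref{dual rational function def}), so ``being a zero of $F$'' is a well-defined condition on the projective point $[t:u:v]\in\P^2(\k)$, and likewise ``being a moving bisector'' depends only on the slope $[t:u]\in\P^1(\k)$; hence both sides of the asserted equivalence are genuinely projective statements.

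For the forward direction, suppose $\ell:tX-uY+v=0$ is a moving bisector. By Definition~\ref{moving} there is a slope $[t_0:u_0]\in\P^1(\k)$ with $\phi_0:=\phi(t_0,u_0)\ne 0$ such that $\ell$ has equation $t_0\phi_0 X-u_0\phi_0 Y+\psi(t_0,u_0)=0$. Dividing this equation by $\phi_0\ne 0$ shows that the dual point of $\ell$ is $[t:u:v]=[t_0:u_0:\psi(t_0,u_0)/\phi_0]$. Substituting the representative $(t_0,u_0,\psi(t_0,u_0)/\phi_0)$ into $F$ gives $\psi(t_0,u_0)-(\psi(t_0,u_0)/\phi_0)\,\phi(t_0,u_0)=\psi(t_0,u_0)-\psi(t_0,u_0)=0$, so $[t:u:v]$ lies on the reduced dual curve.

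For the reverse direction, suppose $[t:u:v]$ is a zero of $F$, i.e.\ $\psi(t,u)=v\phi(t,u)$. The key sub-step is to show $\phi(t,u)\ne 0$. If $\Psi=0$ then $\phi=1$ by Definition~\ref{dual rational function def}, so this is automatic; if $\Psi\ne 0$ then $\phi$ and $\psi$ are relatively prime, and I would argue they share no $\k$-rational zero, so $\phi(t,u)=0$ is impossible (it would force $\psi(t,u)=v\phi(t,u)=0$, i.e.\ a common zero). Once $\phi(t,u)\ne 0$, set $v=\psi(t,u)/\phi(t,u)$ and note that $\phi(t,u)\bigl(tX-uY+v\bigr)=t\phi(t,u)X-u\phi(t,u)Y+\psi(t,u)$; since $\phi(t,u)\ne 0$, this exhibits $\ell$ as exactly the moving bisector of~(\ref{moving eq}) at slope $[t:u]$, completing the proof.

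The only nonformal point — the main obstacle — is the coprimality argument just used: over an arbitrary field $\k$ I must know that two relatively prime homogeneous polynomials in $\k[T,U]$ have no common zero in $\P^1(\k)$. This holds because a zero $[t:u]\in\P^1(\k)$ of a homogeneous $\phi\in\k[T,U]$ produces the $\k$-rational linear factor $uT-tU$ of $\phi$ (dehomogenize, using $t,u\in\k$), so a common $\k$-rational zero of $\phi$ and $\psi$ would yield a common $\k$-linear factor, contradicting $\gcd(\phi,\psi)=1$. Everything else is routine bookkeeping with the fact that $\phi$ and $\psi$ are homogeneous.
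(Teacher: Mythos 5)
Your proof is correct and follows essentially the same route as the paper's: both reduce the statement to the identity $v\phi(t,u)=\psi(t,u)$ and then observe that the condition $\phi(t,u)\ne 0$ is automatic because the relatively prime polynomials $\phi$ and $\psi$ cannot share a zero in $\P^1(\k)$. The only difference is that you spell out the coprimality-implies-no-common-zero argument (via $\k$-rational linear factors) and the $\Psi=0$ case, which the paper dispatches with the phrase ``by the choice of $\phi$ and $\psi$.''
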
  

\begin{proof} The line $\ell$ is a moving bisector if and only if $v \phi(t,u) = \psi(t,u)$ and $\phi(t,u) \ne 0$. The condition $\phi(t,u) \ne 0$ is redundant since by the choice of $\phi$ and $\psi$, these polynomials cannot share a  zero in $\P^1(\k)$.  Thus $\ell$ is a moving bisector if and only if $F(t,u,v)=0$.
\end{proof}

By Theorem~\ref{new}, a bisector field is cubic if and only if every bisector is a moving bisector.  For each of the bisector fields in Figure 1, the moving bisectors are the lines that are not parallel to any other lines, along with the midlines of the pairs of parallel lines.

We define a ``boundary'' of a bisector field in terms of the (homogeneous) discriminant of its moving line of bisectors.
% Since this polynomial has degree at most $3$, we may apply the familiar formulas for the discriminant:
As the presence  of the discriminant suggests, the boundary will be the envelope of the moving bisectors. We work out this connection with the envelope in the next section.

\begin{definition} \label{Delta notation}
The {\it boundary} of a nonlinear bisector field $\B$ is the curve defined by  the homogeneous discriminant $\Delta(X,Y)$  of the moving bisectors of $\B$, i.e., the homogeneous discriminant of    $T\phi(T,U)X-U\phi(T,U)X+\psi(T,U)$
  viewed as a polynomial in variables $T$ and $U$ with coefficients in $\k[X,Y]$. If $\B$ is linear,  the {\it boundary} of $\B$ is the center of~$\B$. 
\end{definition}

 \begin{theorem} \label{parabola theorem}  
 The boundary of a quadratic bisector field is a non-degenerate parabola. There is an affine transformation of the plane that carries the boundary onto the parabola $Y=X^2$ and the pencil of null bisectors onto the pencil of lines parallel to the $X$-axis.  
\end{theorem}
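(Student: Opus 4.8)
The plan is to reduce to a standard‑form field, read off $\Phi$ and $\Psi$ from Lemma~\ref{orthogonal}, determine exactly what ``quadratic'' forces on the data, and then compute the discriminant $\Delta$ directly. First I would use that every bisector field is affinely equivalent to one in standard form, together with the observation that both assertions survive composition with such an equivalence: being a non-degenerate parabola is affine‑invariant, and the desired transformation for $\B$ is obtained by composing the equivalence with the one built for its standard‑form image, provided the boundary and the null pencil transform covariantly. This covariance is the one point requiring care, since the envelope interpretation is only developed in the next section. It holds because affine maps carry the moving bisectors of $\B$ bijectively and incidence‑preservingly to those of its image (the non‑null bisectors obviously, and the single null moving bisector because it is the line in the null direction through the center, both of which are affine‑covariant), and the boundary is exactly the locus of points through which the moving‑bisector family admits a tangent, i.e.\ the zero set of $\Delta$. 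Granting this, assume $\B$ is in standard form with center $(h,k)$ and coefficient $\mu$, so that $\Phi = T^2-\mu U^2$ and $\Psi = 4TU(kT+\mu hU)$.

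Next I would pin down the quadratic condition. By Theorem~\ref{new} and Lemma~\ref{zeroes}, $\B$ is quadratic exactly when $\Phi$ and $\Psi$ share exactly one zero in $\P^1(\k)$. Since neither $T$ nor $U$ divides $\Phi$, the only possible common factor is $kT+\mu hU$, and it divides $T^2-\mu U^2$ precisely when $\mu h^2 = k^2$; moreover $h\ne 0$ (otherwise $k=0$, the center is the origin, and $\B$ is linear). Thus $\mu=(k/h)^2$ is a square, and putting $\nu=k/h$ gives $\mu=\nu^2$, $k=\nu h$, with factorizations $\Phi = (T-\nu U)(T+\nu U)$ and $\Psi = 4\nu h\,TU(T+\nu U)$. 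Removing the common factor $T+\nu U$ as in Definition~\ref{dual rational function def} yields $\phi = T-\nu U$ and $\psi = 4\nu h\,TU$.

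Then I would compute the boundary. Substituting $\phi,\psi$ into $(\ref{moving eq})$ gives the moving‑bisector quadratic $P(T,U)=X\,T^2+(4\nu h-\nu X-Y)\,TU+\nu Y\,U^2$, whose homogeneous discriminant is $\Delta(X,Y)=(4\nu h-\nu X-Y)^2-4\nu XY$. Expanding and regrouping, $\Delta=(\nu X-Y)^2-8\nu h(\nu X+Y)+16\nu^2 h^2$. The degree‑two part is the perfect square $(\nu X-Y)^2$, so $\Delta=0$ is a parabola, and in the coordinates $u=\nu X-Y$, $v=\nu X+Y$ (an invertible change since $\mathrm{char}\,\k\ne 2$) it reads $u^2=8\nu h(v-2\nu h)$, which is non‑degenerate because $\nu h\ne 0$; this proves the first assertion. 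For the transformation I would take the affine map $(X,Y)\mapsto(X'',Y'')=(\nu X-Y,\ 8\nu h(\nu X+Y)-16\nu^2 h^2)$, whose linear part has determinant $16\nu^2 h\ne 0$; under it $\Delta=X''^2-Y''$, so the boundary maps onto $Y''=X''^2$. Finally, the null pencil in standard form consists of the lines $\nu X+Y=c$ (the common zero $T+\nu U=0$ has slope $[-\nu:1]$), and since $Y''=8\nu h\,c-16\nu^2 h^2$ is constant along each such line, the pencil maps onto the horizontal lines $Y''=\text{const}$, as required.

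The hard part is the covariance step that legitimizes the reduction to standard form, since the envelope description on which it rests is only formalized later; isolating the precise characterization of the single null moving bisector (the null‑direction line through the center) is what makes that step clean. After that, the remaining work is the routine but careful identification of the common factor of $\Phi$ and $\Psi$—which simultaneously forces $\mu$ to be a square and singles out the null direction—followed by the discriminant computation, both of which are direct once the data from Lemma~\ref{orthogonal} are in hand.
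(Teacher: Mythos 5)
Your proof is correct, and its skeleton is the same as the paper's: reduce to standard form, identify $\phi=T-\nu U$ and $\psi=4\nu h\,TU$ (the paper writes these as $T-kh^{-1}U$ and $4kTU$, the same thing since $k=\nu h$), compute the homogeneous discriminant, observe its degree-two part is the perfect square $(\nu X-Y)^2$, and build the affine map out of the linear forms $\nu X-Y$ and $\nu X+Y$. The differences are local but worth recording. First, where you derive $\mu h^2=k^2$ (hence $h,k\ne 0$ and $\mu=\nu^2$) algebraically from the common-zero count in Theorem~\ref{new} and the factorization $\Psi=4TU(kT+\mu hU)$, the paper argues geometrically: it takes a $\B$-pair $B,B'$ with $B$ null, deduces $B\parallel B'$, and feeds $t_B=t_{B'}$ into the centroid formulas from the proof of Lemma~\ref{orthogonal} to get $t_Bh=-k$ and $\mu=t_B^2$; your route stays entirely inside the polynomial data and avoids re-entering the geometry of $Q$-pairs. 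Second, for the null-pencil statement the paper computes the gradient of $\Delta$ at the center $(h,k)$, notes the tangent there has the null slope $[-k:h]$, and appeals to preservation of parallelism, whereas you push a general null line $\nu X+Y=c$ through your map and see directly that its image is a horizontal line, with all horizontal lines obtained; this is more elementary and gives surjectivity onto the horizontal pencil for free. Third, you explicitly justify the reduction to standard form, i.e.\ the covariance of the boundary and of the null pencil under affine equivalence, a point the paper passes over with ``we may assume.'' Your ingredients are the right ones (affine maps carry moving bisectors to moving bisectors, the unique null moving bisector being the null-direction line through the center), but the phrase ``the locus of points through which the moving-bisector family admits a tangent'' should be replaced by the repeated-root formulation: over $\overline{\k}$ the boundary is exactly the set of points lying on fewer than two distinct moving bisectors, a purely incidence-theoretic condition, and this is what transports under affine maps. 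Tangency to the boundary is only established later (Theorem~\ref{envelope theorem}), so invoking it here would be circular; with that rewording your covariance step is complete.
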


\begin{proof}  Let $\B$ be a quadratic bisector field. It suffices to prove the claim in the second sentence, and for this we may assume   $\B$ is in standard form. 
We show first that 
 $h \ne 0$, $k \ne 0$, $\mu = k^2/h^2$, the slope of  every null bisector   is $-k/h$,   
 %there is a bisector  $B$ in $\B$   parallel to another bisector, $t_B^2 = \mu$, $h = t_Bk$ and
  $\phi(T,U) = T- kh^{-1} U$ and $\psi(T,U) = 4kTU.$
%\begin{eqnarray} \label{B2} \Upsilon(t,u)& = & \frac{4ktu}{t+t_B u}.
%\end{eqnarray}
Let $A$ be the line $Y=0$ and $A'$  the line $X=0$. By Lemma~\ref{orthogonal},  $\Phi(T,U) = T^2-\mu U^2$ and $ \Psi(T,U) = 4TU(
kT+ \mu h U).$  
 Now $(h,k) \ne (0,0)$ since otherwise $\Psi(T,U) =0$, contrary to the fact that $\B$ is not linear. Let
$B,B'$ be a
$\B$-pair of distinct bisectors in $\B$ such that $B$ is null.
It follows from Lemma~\ref{zeroes} that $B$ is $Q$-orthogonal to itself, where $Q$ is a quadrilateral such that $\B$ is the bisector field of $Q$. Since $B$ is $Q$-orthogonal to $B'$ also, $B$ is parallel to $B'$. By Lemma~\ref{zeroes},  neither $B$ nor $B'$  is parallel to $A$ or $A'$ since $A$ is not parallel to $A'$.   %Thus $Q = ABA'B'$ is a quadrilateral in $\B$.  
Since $t_B=t_{B'}$, the   calculation of the centroid of the quadrilateral $Q = ABA'B'$  in the proof of Lemma~\ref{orthogonal} shows    $-4 h \mu = t_B(v_B+v_{B'}) = 4t_Bk$, so that $h\mu = -t_Bk$, and hence $t_Bh = -k$ since  $\mu = t_Bt_{B'} = t_B^2$. 
Since $(h,k) \ne (0,0)$, this implies neither $h$ nor $k$ is $0$. Also, the fact that $t_B h = -k$ implies the slope of $B$ is $-k/h$ and $\mu = t_B^2 = k^2/h^2$.  
Moreover,  $$\Psi(T,U) = 4TU(kT+\mu hU) = 
 %4TU(kT-t_BkU)=
 4kTU(T-t_BU).$$ 
Since also  $$\Phi(T,U)=T^2-\mu U^2 = (T-t_BU)(T+t_BU),$$
this implies $\psi(T,U) = 4kTU$ and $\phi(T,U) = T+t_BU = T-kh^{-1}U$.

The moving line    from Definition~\ref{Delta notation} is
 \begin{eqnarray*} 
  (TX-UY)(T-kh^{-1}U)+4kTU 
   =  XT^2+(4k- kh^{-1}X-Y)TU+kh^{-1}YU^2.
  \end{eqnarray*}
Calculating the homogeneous discriminant of this polynomial viewed as a quadratic  polynomial with coefficients in $\k[X,Y]$, we find
the boundary of $\B$ to be the curve defined by    %$\Delta(X,Y)$ factors as
\begin{eqnarray*}
\Delta(X,Y) & = &  
\left(kh^{-1}X-Y\right)^2
-8k(kh^{-1}X+Y-2k).
%(hX-kY+2k^2)^2.
 \end{eqnarray*}
Let $f(X,Y) = kh^{-1}X-Y$ and $g(X,Y) =-8k(kh^{-1}X+Y-2k).$ 
 Then  $\Delta(X,Y) = f(X,Y)^2-g(X,Y),$ so since $f$ and $g$ are linear and $-16k^2h^{-1} \ne 0$, the  transformation
   ${\mathcal{A}}$ given by ${\mathcal{A}}(x,y) = (f(x,y),g(x,y))$ is affine and
    carries the curve $\Delta$ onto the non-degenerate parabola $Y=X^2$.      
Since  $\Delta_X(h,k) = -{8k^2}{h^{-1}}$ and $\Delta_Y(h,k) = -8k$, the line $\ell$ tangent to $\Delta$ at $(h,k)$ has slope $[-k:h]$, which, as established earlier in the proof, is the slope of the null bisectors. The image of $(h,k)$ under ${\mathcal{A}}$ is $(0,0)$ and the image of $\ell$ is the $X$-axis, so the theorem follows. 
\end{proof}

The boundary of a cubic bisector field is more complicated:

\begin{lemma} \label{tedious} 
Let  $\B$ be a cubic bisector field in standard form. Let $(h,k)$ denote the center of $\B$, and let $\mu$ be the coefficient of $\B$. 
%\begin{itemize} 
%\item[$(1)$] If $\B$ is irreducible, then 
The boundary of $\B$ is   
\begin{eqnarray*}
\Delta(X,Y) & = &
%& & 
4\mu(\mu^2 X^4- 12h\mu^2 X^3-2\mu X^2Y^2
-20k\mu X^2Y  +4\mu(12h^2\mu+k^2)X^2
\\
 & &
-20h\mu XY^2+
88hk\mu XY-
32h\mu(2h^2\mu+k^2)X+
Y^4-12kY^3 \\
 & &
 +4(h^2\mu+12k^2)Y^2-32k(h^2\mu+2k^2)Y+64h^2k^2\mu).
\end{eqnarray*}
%
 % \begin{eqnarray*}
%\Delta(X,Y) & = &  \mu^3 X^4  -2\mu^2X^2Y^2+\mu Y^4 
% +12h\mu^2 X^3+20h\mu XY^2 -
% 20k\mu^2X^2Y -12 k\mu Y^3\\
%& & +4\mu(k^2\mu+12h^2)X^2
%- 
%88hk\mu XY+4(12k^2\mu+h^2)Y^2
%+32h(k^2\mu+2h^2)X \\
%& & 
%-32k(2k^2\mu+h^2)Y+64h^2k^2.
%\end{eqnarray*} 
%\item[$(2)$] 
 \end{lemma}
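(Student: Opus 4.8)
The plan is to reduce everything to the explicit shape and position polynomials furnished by Lemma~\ref{orthogonal} and then compute a single binary-cubic discriminant. First I would invoke Lemma~\ref{orthogonal}: since $\B$ is in standard form with center $(h,k)$ and coefficient $\mu$, we have $\Phi(T,U) = T^2 - \mu U^2$ and $\Psi(T,U) = 4TU(kT + \mu h U)$. Because $\B$ is cubic, Theorem~\ref{new} tells us $\Phi$ and $\Psi$ have no common zero in $\P^1(\k)$, hence are relatively prime; and since $\Phi$ is already monic in $T$, the relatively prime pair of Definition~\ref{dual rational function def} is simply $\phi = \Phi$ and $\psi = \Psi$.

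Next I would write down the moving-line polynomial from $(\ref{moving eq})$, namely $T\phi(T,U)X - U\phi(T,U)Y + \psi(T,U)$, and expand it as a binary cubic form in $T,U$ with coefficients in $\k[X,Y]$. A short expansion gives
\[
X\,T^3 + (4k - Y)\,T^2U + \mu(4h - X)\,TU^2 + \mu Y\,U^3,
\]
so the four coefficients are $a = X$, $b = 4k - Y$, $c = \mu(4h - X)$, and $d = \mu Y$. By Definition~\ref{Delta notation}, the boundary $\Delta(X,Y)$ is the homogeneous discriminant of this binary cubic, i.e.
\[
\Delta = 18abcd - 4b^3d + b^2c^2 - 4ac^3 - 27a^2d^2.
\]
Substituting the four coefficients yields
\[
18\mu^2 XY(4k-Y)(4h-X) - 4\mu Y(4k-Y)^3 + \mu^2(4k-Y)^2(4h-X)^2 - 4\mu^3 X(4h-X)^3 - 27\mu^2 X^2Y^2,
\]
and it remains to expand and collect this by monomials in $X$ and $Y$.

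The final collection step is the only real obstacle, and it is purely computational: one must organize the resulting degree-four polynomial into its thirteen monomials and observe that each carries a common factor of $4\mu$. I would verify this factorization at a few landmark monomials to guard against sign errors before trusting a systematic expansion. For instance, the $X^4$ term arises only from $-4ac^3$ and equals $4\mu^3 X^4 = 4\mu\cdot\mu^2 X^4$; the $Y^4$ term arises only from $-4b^3d$ and equals $4\mu\, Y^4$; and the $XY$ term receives $288hk\mu^2$ from $18abcd$ and $64hk\mu^2$ from $b^2c^2$, totalling $352hk\mu^2 = 4\mu\cdot 88hk\mu$, matching the stated coefficient. Pulling the factor $4\mu$ out front then reproduces exactly the claimed expression for $\Delta(X,Y)$. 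There is no conceptual subtlety here — this is why the lemma is labeled \emph{tedious} — so once the landmark coefficients are checked the remainder is a routine, if lengthy, verification.
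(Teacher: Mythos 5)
Your proposal is correct and follows essentially the same route as the paper's own proof: use Lemma~\ref{orthogonal} and cubicity to identify $\phi=\Phi$, $\psi=\Psi$, expand the moving line to $XT^3+(4k-Y)T^2U+\mu(4h-X)TU^2+\mu Y U^3$, and take its homogeneous discriminant, which the paper compresses into ``a calculation shows.'' Your use of the standard binary-cubic discriminant $18abcd-4b^3d+b^2c^2-4ac^3-27a^2d^2$ is the correct formula, and your landmark checks (the $X^4$, $Y^4$, and $XY$ coefficients, each matching the stated $4\mu(\cdots)$ factorization) are accurate, so the remaining expansion is indeed routine.
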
 
 
 \begin{proof} Since $\B$ is in standard form,  Lemma~\ref{orthogonal} implies the moving line  in Definition~\ref{Delta notation} is 
 \begin{eqnarray*} 
 %G(T,U,X,Y)  & = &  T(T^2-\mu U^2)X-U(T^2-\mu U^2)Y+4TU(kT+\mu hU) \\
  XT^3+(4k-Y)T^2U+\mu(4h-X)TU^2+\mu Y U^3.
 \end{eqnarray*}
 %Using the formula  
% $${\text{disc}}(aT^3+bT^2U+cTU^2+dU^3)  =   b^2c^2-4ac^3-4b^3d-27a^2d^2+18abcd$$
% 
% \begin{eqnarray} \label{disc2}
%{\text{disc}}(aT^2+bTU+cU^2) &= & b^2-4ac \\ 
% \label{disc1}
%{\text{disc}}(aT^3+bT^2U+cTU^2+dU^3) & = &   b^2c^2-4ac^3-4b^3d-27a^2d^2+18abcd.
%\end{eqnarray}  
% 
% (\ref{disc1}) 
%to calculate 
A calculation shows the homogeneous  discriminant $\Delta(X,Y)$ of this cubic, viewed as a polynomial in $T$ and $U$, is as in the lemma. %statement (1) now follows. Statement (2) is
 \end{proof}

 As illustrated by Figures 1(a) and 1(b), the boundary of a cubic bisector field can have singularities, unlike the boundary of a quadratic bisector field. 
%Note the restriction to algebraically closed fields here.  
 %If $\k$ is algebraically closed, then up to affine equivalence we can describe the boundary with a polynomial that works for any choice of algebraically closed field~$\k$.  

 \begin{theorem} \label{Delta theorem} Suppose $\k$ is algebraically closed. If $\B$ is a cubic bisector field, then $\B$ is affinely equivalent to a bisector field whose boundary   is the rational quartic  curve \begin{eqnarray*}  
 X^4+2X^2Y^2+Y^4+10X^2Y-6Y^3-X^2+12Y^2-8Y = 0.
\end{eqnarray*}
If {\rm char}~$\k \ne 3$,  the boundary has three singularities, all of which are ordinary cusps. If {\rm char}~$\k = 3$,   there is only one  singularity, and this singularity is a higher-order cusp.
\end{theorem}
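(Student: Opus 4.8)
The plan is to assemble the earlier classification results into a single normal form, read off its boundary from Lemma~\ref{tedious}, and then analyze the resulting quartic directly. For the first sentence, I would argue as follows. Since $\k$ is algebraically closed, every element is a square, so the product of any two nonzero coefficients is a square. By the remark following Lemma~\ref{well lemma}, every cubic bisector field over $\k$ is well centered; hence by Theorem~\ref{big question lemma} together with the center-normalization carried out in the proof of Theorem~\ref{big question} (move the center onto the $Y$-axis via Lemma~\ref{preserve}, then scale), any cubic bisector field $\B$ is affinely equivalent to the bisector field in standard form with center $(h,k)=(0,\tfrac12)$ and coefficient $\mu=-1$. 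This field exists as the image of $\B$ and, by Theorem~\ref{equal}, is determined by that data. Substituting $h=0$, $k=\tfrac12$, $\mu=-1$ into the formula of Lemma~\ref{tedious} kills every term carrying a factor of $h$ and leaves
$$\Delta(X,Y) = -4\bigl(X^4+2X^2Y^2+Y^4+10X^2Y-6Y^3-X^2+12Y^2-8Y\bigr).$$
As $\mathrm{char}\,\k\ne2$, the scalar $-4$ is a unit, so the zero set of $\Delta$ is exactly the stated quartic.

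Next I would locate the singular points by solving $\Delta=\Delta_X=\Delta_Y=0$. The crucial simplification is that $\Delta_X=2X(2X^2+2Y^2+10Y-1)$ factors. On the branch $X=0$ one computes $\Delta=Y(Y-2)^3$ and $\Delta_Y=4Y^3-18Y^2+24Y-8$, yielding the single singular point $(0,2)$ (the point $(0,0)$ lies on the curve but has $\Delta_Y\ne0$). On the branch $2X^2+2Y^2+10Y-1=0$, substituting $X^2=\tfrac12-Y^2-5Y$ collapses $\Delta_Y$ to $-3(4Y+1)^2$, forcing $Y=-\tfrac14$ and then $X^2=\tfrac{27}{16}$; one checks $\Delta=0$ there, giving $\bigl(\pm\tfrac{3\sqrt3}{4},-\tfrac14\bigr)$. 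So for $\mathrm{char}\,\k\ne3$ there are exactly three affine singular points. Checking at infinity, where the leading form $(X^2+Y^2)^2$ vanishes, i.e.\ at $[\,\pm i:1:0]$, gives $\Delta_Z=-16\ne0$, so nothing is singular at infinity in any characteristic $\ne2$. When $\mathrm{char}\,\k=3$ the factor $2X^2+2Y^2+10Y-1$ and the value $\tfrac{27}{16}$ degenerate, and repeating the computation modulo $3$ shows the three points coalesce into $(0,2)$ as the unique singularity.

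To classify the singularities I would translate each to the origin and inspect the leading forms. Writing $w=Y-2$, expansion at $(0,2)$ gives
$$\Delta = 27X^2+2w^3+18X^2w+X^4+2X^2w^2+w^4,$$
whose lowest part in the weights $\mathrm{wt}\,X=3$, $\mathrm{wt}\,w=2$ is $27X^2+2w^3$; since $27\ne0$ and $2\ne0$ in characteristic $\ne2,3$, the tangent cone is the double line $X^2=0$ and the germ is analytically $X^2-w^3$, an ordinary ($A_2$) cusp. For $\bigl(\pm\tfrac{3\sqrt3}{4},-\tfrac14\bigr)$, the symmetry $X\mapsto-X$ of $\Delta$ swaps the two points, so I need only treat one; there the Hessian satisfies $\Delta_{XX}\Delta_{YY}-\Delta_{XY}^2=\tfrac{27}{2}\cdot\tfrac{81}{2}-(18X)^2=0$, so the quadratic part has rank $1$ (a double line, ruling out a node), and a surviving transverse cubic term certifies an ordinary cusp again. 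In characteristic $3$ the coefficient $27$ vanishes and the expansion at $(0,2)$ becomes $2w^3+X^4+2X^2w^2+w^4$; now the tangent cone is the triple line $w^3=0$ and the Newton polygon has a single edge from $(0,3)$ to $(4,0)$ with nonzero endpoint coefficients, so the germ is the unibranch multiplicity-$3$ cusp $w^3+X^4=0$, a higher-order cusp.

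The main obstacle is the classification at the two off-axis points, whose translation introduces coefficients involving $\sqrt3$; the cleanest route is the Hessian-rank computation above to force a double tangent line, followed by a single transverse third-order term to certify $A_2$ rather than a worse cusp. As a global consistency check I would use the arithmetic genus $\binom{4-1}{2}=3$: three $A_2$ cusps contribute total $\delta$-invariant $3$ (and the single $w^3=X^4$ cusp in characteristic $3$ has $\delta=3$ as well), confirming geometric genus $0$ and hence that the quartic is rational in every case. A secondary subtlety is confirming that Lemma~\ref{tedious}, being the discriminant of a cubic, remains valid when $\mathrm{char}\,\k=3$: the discriminant is a fixed integer polynomial in the coefficients, so its reduction modulo $3$ is legitimate, but the resulting curve must be re-examined on its own terms (as above) rather than specialized from the characteristic-zero picture.
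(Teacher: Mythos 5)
Your proposal is correct, and its first half --- reducing to a standard-form bisector field with center $(0,\tfrac12)$ and coefficient $\mu=-1$, then substituting into Lemma~\ref{tedious} --- is exactly the paper's argument (the paper invokes Lemma~\ref{preserve} directly and supplies the small verification that the center is not the origin, i.e.\ $k\neq 0$; you inherit that by citing the normalization inside the proof of Theorem~\ref{big question}). Where you genuinely diverge is the singularity analysis. You locate the singular points by solving $\Delta=\Delta_X=\Delta_Y=0$ head-on, exploiting the factorization $\Delta_X=2X(2X^2+2Y^2+10Y-1)$ and the collapse of $\Delta_Y$ to $-3(4Y+1)^2$ on the conic branch, and you verify nonsingularity at the two points at infinity by hand; the paper instead exhibits three affine coordinate changes $\Delta_1,\Delta_2,\Delta_3$ carrying each suspected singularity to the origin, certifies each as an ordinary cusp by Fulton's criterion (double point, unique tangent, intersection multiplicity $3$ with the tangent line), and then uses the genus inequality $0\le g\le 3-\sum_q \tfrac12 m_q(m_q-1)$ to conclude in one stroke that the list of singularities is complete, that the curve is rational, and (in characteristic $3$) that the higher cusp is the unique singularity --- the points at infinity never need separate treatment. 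Your route is more elementary and directly checkable, but as written it leaves two steps asserted rather than proved. First, at $\bigl(\pm\tfrac{3\sqrt3}{4},-\tfrac14\bigr)$ the ``surviving transverse cubic term'' must actually be exhibited: a rank-one quadratic part only rules out a node and is equally consistent with a tacnode or worse; the missing computation is precisely what the paper's change of coordinates makes visible, namely the term $16Y^3$ in $\Delta_1=Xf_1+\tfrac{27}{4}X^2+16Y^3(Y-1)$, which gives intersection multiplicity exactly $3$ with the tangent $X=0$. Second, in characteristic $3$ the identity $\Delta_Y\equiv-3(4Y+1)^2\equiv 0$ means $\Delta_Y$ vanishes identically on the conic branch, so ``repeating the computation'' is structurally different there: every point of that branch lying on the curve is singular, and uniqueness requires intersecting the branch with $\Delta=0$ itself (one finds $\Delta$ restricts to $2(Y+1)^3$ on the branch, forcing the single point $(0,2)$). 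Both facts are true and routine, so these are glosses rather than fatal gaps; moreover, the genus count you relegate to a ``consistency check'' actually closes the first one rigorously, exactly as in the paper: three double points force total $\delta$-invariant $3$, hence $\delta=1$ at each, and a $\delta=1$ double point that is not a node must be an ordinary cusp.
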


\begin{proof}
Since $\k$ is algebraically closed, $\B$ is well centered in the sense of Section 5. Let $A'$ be the bisector passing through the center $(h,k)$ of $\B$, and let $A$ be the line in ${\mathbb{B}}$ for which $A,{A'}$ is a $\B$-pair. Since the bisector field ${\mathbb{B}}$ is cubic, $\B$ contains only moving bisectors and so   $A$ is not parallel to ${A'}$ by Lemma~\ref{zeroes}.    
 Since  $\k$ is algebraically closed, every element in~$\k$ is a square in $\k$, and so by  Lemma~\ref{preserve} we may assume $\B$ is in standard form, 
   $A$ is the line $Y=0$, ${A'}$ is the line $X=0$ and $\B$ has coefficient $\mu=-1$.  Since  the center $(h,k)$ of $\B$ lies on ${A'}$, we have  $h=0$. If $k =0$, then the centroid of $Q$ is the intersection of $A$ and $A'$. However, using  Lemma~\ref{orthogonal}, this implies $\Psi =0$, in which case the reduced dual polynomial $F$ is linear, contrary to the assumption that $\B$ is cubic. Thus $k \ne 0$, and
  %Thus, since $\Psi(t,u) = 4tu(kt-ht)= 4kt^2u$, it must be that $k \ne 0$.  
   after rescaling by a factor of $1/(2k)$, we can assume the center $(h,k)$ of ${\mathbb{B}}$ is $(0,1/2)$.     
 % By Lemma~\ref{orthogonal},  $
 %  \Phi(T,U)=T^2+U^2$ and $\Psi(T,U)=2T^2U$. 
 Substituting 
  $h=0$, $k =1/2$ and $\mu =-1$ into the equation for the boundary from Lemma~\ref{tedious} 
   yields the curve $\Delta$ in the present theorem.   

   We next find the singularities of the curve $\Delta$. 
   Since $\k$ is algebraically closed there is $\theta \in \k$ such that $16\theta^2=27$. 
   A calculation shows there are polynomials $f_1,f_2 \in \k[X,Y]$ 
   for which each term of $f_1$ and $f_2$ has degree at least $2$ and  
\begin{eqnarray*}
\Delta_1(X,Y)&:=&\Delta\left(X-\frac{4}{3}\theta Y+\theta,Y-\frac{1}{4}\right) \: = \: Xf_1(X,Y)+\frac{27}{4}X^2
+16Y^3(Y-1) \\
\Delta_2(X,Y)&:=&\Delta\left(X+\frac{4}{3}\theta Y-\theta,Y-\frac{1}{4}\right) \: =\: Xf_2(X,Y)+\frac{27}{4}X^2
+16Y^3(Y-1) \\
\Delta_3(X,Y)&:=& \Delta\left(X,Y+2\right) \: =\: 
X^4+2X^2Y^2+18X^2Y+27X^2+(Y+2)Y^3
%X()-27X^2-(Y-2)Y^3
\end{eqnarray*}

Suppose the characteristic of $\k$ is not $3$.  
At the origin $p=(0,0)$ the curves $\Delta_1,\Delta_2,\Delta_3$ each have a singularity of order $2$ and  unique tangent line $X=0$. Also, with ${\mathcal{O}}_p$ the local ring at $p$, we have   ${\mathcal{O}}_p/(\Delta_i,X) = {\mathcal{O}}_p/(Y^3,X)$ and so the intersection multiplicity of the curve $\Delta_i$ and line $X=0$ at $p$ is $3$.  Therefore, the origin is an ordinary cusp in the sense of \cite[Exercise~3.22, p.~82]{Fulton} for each of the curves $\Delta_i$.
Each $\Delta_i$ is the image of $\Delta$ under an affine transformation, and examining these transformations   shows $\Delta$ has cusps at  $(\theta,-{1}/{4})$,  $(-\theta,-{1}/{4})$ and $(0,2)$.
% The discriminant of $f(Z)$ is $1289945088k^2(h^2+k^2)^2$, and so $f(Z)$ has a double root if and only if $k=0$ or $h^2+k^2=0$.  If $k=0$, then $v_B=-v_\Delta$, so that $h =( t_\Delta-t_B)v_B$. 
Following \cite[Corollary 1, Section 8.3]{Fulton} or \cite[Lemma~3.24, p.~55]{Hir}, 
the genus $g$ of a plane curve of degree $d$ satisfies $$0 \leq g \leq (1/2)(d-1)(d-2)-\sum_q (1/2)m_q(m_q-1),$$ where $q$ ranges over the points on the curve and $m_q$ is the order of the point $q$ on the curve. Thus the genus $g$ of  $\Delta$ satisfies $0 \leq g \leq 3-(1+1+1)=0$, which proves that the three singularities listed are the only singularities of $\Delta$, these singularities are ordinary cusps and   $\Delta$ is a rational curve. 

%Since $\deg \Delta = 4$ and $\Delta$ has virtual genus ${(4-1)(4-2)}/{2} = 3$, the curve  $\Delta$ can have no more than three singularities \cite[Lemma~3.24, p.~55]{Hir}, and thus we have found all three. 

If char $\k = 3$, then   $\theta=0$ and $-({4}^{-1}) =2$ and so all three singularities listed above coincide with 
 $(0,2)$, and $\Delta_1=\Delta_2=\Delta_3$ (when the term $(-4/3)\theta$ is replaced by $0$).  Working with $\Delta_3$ and reducing mod $3$ yields $\Delta_3(X,Y) = 
 X^4+2X^2Y^2+Y^4+2Y^3$
% 2X^4+X^2Y^2+2Y^4+Y^3$. 
Thus $Y=0$ is the unique tangent line to the curve $\Delta_3$ at the origin, and the origin has order $3$ on this curve. This tangent line has multiplicity~$4$ at the origin. Consequently, the singularity $(0,2)$ is a higher-order cusp. The genus $g$ of $\Delta$ satisfies $0 \leq g \leq 3 - 3=0$ so $\Delta$ is a rational curve having a unique singularity, which is a higher-order cusp. 
  \end{proof} 

Removing the restriction to an algebraically closed field, we have

 \begin{corollary} \label{Delta corollary} If $\B$ is a  cubic bisector field, then the boundary of $\B$ is a rational quartic curve that 
has at most three singularities, each of which is a cusp. 
  %\begin{eqnarray*}  
%\Delta(x,y) &= &  y-x^2.
%\end{eqnarray*}
\end{corollary}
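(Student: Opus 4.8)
The plan is to reduce to the algebraically closed case already handled in Theorem~\ref{Delta theorem} by a base-change argument. Let $\B$ be a cubic bisector field over $\k$, and fix a quadrilateral $Q$ with $\B = \Bis(Q)$; after an affine transformation over $\k$ we may assume $Q$, and hence $\B$, is in standard form with center $(h,k)$ and coefficient $\mu$, so that the boundary of $\B$ is the quartic $\Delta(X,Y) \in \k[X,Y]$ computed in Lemma~\ref{tedious}. Fix an algebraic closure $\overline{\k}$, let $\overline{Q}$ be the extension of $Q$ to the plane over $\overline{\k}$, and set $\overline{\B} = \Bis(\overline{Q})$. As in the proof of Corollary~\ref{switch}, $\overline{Q}$ and $Q$ share the same shape and position polynomials $\Phi$ and $\Psi$. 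Since $\B$ is cubic, $\Phi$ and $\Psi$ are relatively prime, so $\phi = \Phi$ and $\psi = \Psi$, and the moving line displayed in the proof of Lemma~\ref{tedious} is literally the same polynomial over $\k$ and over $\overline{\k}$; its discriminant $\Delta$ is therefore unchanged, so the boundary of $\overline{\B}$ is again the curve $\Delta = 0$, now viewed over $\overline{\k}$.

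First I would check that $\overline{\B}$ is still cubic, which is exactly what lets me invoke Theorem~\ref{Delta theorem}. By Theorem~\ref{new} this is equivalent to $\Phi = T^2-\mu U^2$ and $\Psi = 4TU(kT+\mu hU)$ having no common zero in $\P^1(\overline{\k})$. Inspecting the two polynomials, one sees that a common zero forces $h,k \neq 0$ and $\mu = k^2/h^2$, in which case the resulting common zero $[-k/h:1]$ is already $\k$-rational; but then $\B$ would fail to be cubic over $\k$, a contradiction. Hence $\Phi$ and $\Psi$ remain relatively prime over $\overline{\k}$ and $\overline{\B}$ is cubic.

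Now Theorem~\ref{Delta theorem} applies to $\overline{\B}$: it is affinely equivalent over $\overline{\k}$ to a bisector field whose boundary is a rational quartic with three ordinary cusps, or, in characteristic $3$, with a single higher-order cusp. Because an affine transformation carries a bisector field to a bisector field and its moving bisectors to the moving bisectors of the image, the boundary is an affine invariant; and affine maps preserve the degree, the geometric genus, and the number and analytic type of the singularities of a plane curve. Transporting the conclusion of Theorem~\ref{Delta theorem} back along this equivalence shows that the boundary of $\overline{\B}$, namely $\Delta = 0$ over $\overline{\k}$, is a rational quartic with at most three singularities, each a cusp. This is the geometric content of the corollary; moreover any $\k$-rational singular point of $\Delta = 0$ is a fortiori singular over $\overline{\k}$, so over $\k$ there are still at most three singularities and each is a cusp.

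The hard part will be the base-change step, specifically guaranteeing that passing to $\overline{\k}$ neither alters the boundary polynomial nor lowers the degree of the reduced dual polynomial: the short computation showing that the only possible common zero of $\Phi$ and $\Psi$ is forced to be $\k$-rational is precisely what secures cubicity over $\overline{\k}$, and hence the applicability of Theorem~\ref{Delta theorem}. Everything else is bookkeeping about the affine invariance of the boundary together with the fact that rationality and the cusp structure are geometric notions, insensitive to the base field in the directions we need.
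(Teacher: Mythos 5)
Your proposal is correct and takes essentially the same route as the paper: extend to $\overline{\k}$, check that the extension $\overline{\B}$ is still cubic, apply Theorem~\ref{Delta theorem}, and descend by noting that the boundary of $\B$ consists of the $\k$-rational points of the boundary of $\overline{\B}$. The only real difference is the cubicity step, where the paper simply cites Corollary~\ref{Bis cor} (a quadrilateral with no parallel sides or diagonals keeps that property under field extension), whereas you verify it by direct computation with $\Phi = T^2-\mu U^2$ and $\Psi = 4TU(kT+\mu hU)$ in standard form, showing any common zero over $\overline{\k}$ is already $\k$-rational --- a valid, if heavier, substitute.
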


\begin{proof}  Extending to the algebraic closure $\overline{\k}$ of $\k$,   Corollary~\ref{Bis cor}  implies the extension 
 $\overline{\B}$  of $\B$ to the plane $\overline{\k} \times \overline{\k}$ is a cubic bisector field over $\overline{\k}$.  Applying Theorem~\ref{Delta theorem} to ${\overline{\B}}$,  the boundary of $\overline{\B}$ is a rational quartic curve that 
has at most three singularities, each of which is a cusp. The boundary of $\B$ is the set of  $\k$-rational points on the boundary of ${\overline{\B}}$, and so the corollary follows.
\end{proof}

 % \begin{remark} For $\k = {\mathbb{C}}$, there is, up to projective equivalence, only one rational quartic curve having three singularities, all of which are cusps \cite[Theorem 3.5]{FZ}. 
  %\end{remark} 

 \section{The boundary as an envelope}
 
 We show in this section that the
 moving  bisectors from Definition~\ref{moving} are precisely the lines 
  tangent to the boundary of the bisector  field. The results of the last section allow us to work with bisector fields whose boundaries have tractable explicit equations. Because of this, some of the results in this section, such as Lemma~\ref{Pardini lemma},  can alternatively be verified by brute force using  computer algebra software, but we have opted instead for a more conceptual approach that uses   duality for plane curves, with the caveat that since the base field $\k$ may have positive characteristic, the use of duality requires a little more care. 
 
%We find  the envelope of the affine bisectors in a bisector field, but we do not do this in the usual way that involves resultants or elimination theory, first because   our goal is to give a self-contained approach that does not require recourse to large computations, and second because we are not necessarily working in characteristic $0$, which is the typical context for finding the envelope.   

 For a polynomial $G(X_1,\ldots,X_n) \in \k[X_1,\ldots,X_n]$, we denote the partial derivative of $G$ with respect to $X_i$ as $G_{X_i}.$
% is the set 
 %\begin{center}cl$(\A^*) = \{[t:u:v]: v\Phi(t,u) =\Psi(t,u)\}$. 
 %\end{center}
   If $G \in \k[X,Y,Z]$ is a nonzero homogeneous   polynomial, the zero set defined by $G$ is denoted  $${\bf V}(G)=\{[x:y:z] \in \P^2(\k):G(x,y,z)=0\}.$$  
   
   Let $\B$ be a nonlinear bisector field, and let $F$ be its   reduced dual polynomial, as in Definition~\ref{dual rational function def}.
% By Theorem~\ref{cubic} and Lemma~\ref{zeroes}, the line $tX-uY+v=0$ is an affine bisector in $\B$ if and only if $F(t,u,v) =0$ and $\Phi(t,u) \ne 0$. 
   Let  $D(X,Y,Z)$    be the  discriminant of   $$ T \phi(T,U)X  -U\phi(T,U)Y + \psi(T,U)Z$$ 
viewed as a polynomial with coefficients in $\k[X,Y,Z]$. Then $\Delta(X,Y) = D(X,Y,1)$ and $D$ is the homogenization of $\Delta$. Thus $D(x,y,z) = 0$ if and only if $$T \phi(T,U)x  -U\phi(T,U)y + \psi(T,U)z$$ has a repeated root in $\P^1(\overline{\k})$. 
Define {rational maps}  at the nonsingular points of $F$ and $D$ by
\begin{eqnarray*}
&f:&{\bf V}(F) \rightarrow \P^2(\k): [t:u:v] \mapsto  [F_T(t,u,v):-F_U(t,u,v):F_V(t,u,v)] \\
&d:&{\bf V}(D) \rightarrow \P^2(\k): [x:y:z] \mapsto  [D_X(x,y,z):D_Y(x,y,z):D_Z(x,y,z)].
\end{eqnarray*} 
Denote by ${\bf V}(F)^*$ the Zariski closure of the image of $f$ in $\P^2(\k)$ and by    ${\bf V}(D)^*$ the Zariski closure of the image of $d$ in $\P^2(\k)$. The next lemma verifies that $f$ and $d$ define a duality. 
%See Figure  {\bf [Include this?]}  for an illustration of this duality. 

% \begin{figure}[h] \label{diagonalspar}
% \begin{center}
% \includegraphics[width=0.9\textwidth,scale=.09]{DeltoidCardiod_Fields.png} 
 %%%%%%%%%\includegraphics[scale=.09]{TwoPairsConeLinesAnti.jpg}
% \end{center}
% \caption{A bisector field and its reduced dual curve. The lines tangent to the reduced dual curve correspond to bisectors tangent to the boundary, and  the points on the boundary correspond to lines tangent to the reduced dual curve. }
%\end{figure}

%  Let 
%  ${\mathcal{C}}$ be the curve defined by $F=0$.   Consider the rational maps 
 % \begin{quote} 
% $ f:{\mathcal{C}}  \rightarrow  {\mathcal{C}}^*:[t:u:v] \mapsto [F_T(t,u,v):F_U(t,u,v):F_V(t,u,v)]$ 
 
 %\smallskip
 
% {\noindent}$
 % f^*:{\mathcal{C}}^*  \rightarrow  {\mathcal{C}}^{**}:[x:y:z] \mapsto 
%  [\Delta_X(x,y,z):\Delta_Y(x,y,z):\Delta_Z(x,y,z)].$
%  \end{quote}

 \begin{lemma} \label{Pardini lemma} \label{dual of F} If $\k$ is algebraically closed and $\B$ is a nonlinear bisector field, then 
 \begin{center} ${\bf V}(F)^* = {\bf V}(D)$, ${\bf V}(D)^* = {\bf V}(F)$ and 
 %and let $\B$ be a nonlinear bisector field. 
 %The   curves $F$ and  $D$ are dual to each other, and if $f:{\bf V}(F) \rightarrow {\bf V}(D)$ and $g:{\bf V}(D) \rightarrow {\bf V}(F)$ are the dual maps, then 
% $g(f([t:u:v])) = [t:u:\phi(t,u)]$ for all $[t:u] \in \P^1(\k)$.  
 $d \circ f =1_{{\bf V}(F)}$. 
\end{center}

 \end{lemma}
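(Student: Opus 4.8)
The plan is to establish the classical biduality between the reduced dual curve ${\bf V}(F)$ and its envelope ${\bf V}(D)$ by interpreting both sides geometrically and then invoking the standard reflexivity theorem for plane curves, with extra care in positive characteristic. The key observation is that the map $f$ is precisely the Gauss map sending a point of the dual curve to its tangent line, viewed as a point of the double-dual plane, and likewise $d$ is the Gauss map on ${\bf V}(D)$. Since the points $[t:u:v]$ on ${\bf V}(F)$ are exactly the dual points of the moving bisectors (Proposition~\ref{new moving bisector}), I would first verify that $f([t:u:v])$ computes the dual point of the tangent line to ${\bf V}(F)$ at $[t:u:v]$, which lands on ${\bf V}(D)$; this is the content of showing $f({\bf V}(F)) \subseteq {\bf V}(D)$, hence ${\bf V}(F)^* \subseteq {\bf V}(D)$. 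The reverse containment and the identity $d\circ f = 1_{{\bf V}(F)}$ would then follow once I show that ${\bf V}(D)$ is genuinely the dual curve of ${\bf V}(F)$ and that biduality holds.

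The cleanest route is to identify ${\bf V}(D)$ directly as the projective dual ${\bf V}(F)^*$. Because $D$ is the discriminant of the pencil of lines $T\phi(T,U)X - U\phi(T,U)Y + \psi(T,U)Z$ parametrized by $[t:u]\in\P^1$, a point $[x:y:z]$ lies on ${\bf V}(D)$ exactly when this family has a repeated member over $[x:y:z]$, i.e.\ when the line dual to $[x:y:z]$ is tangent to the parametrized moving line in the dual plane. Matching this with the definition of $f$ shows that ${\bf V}(D)$ is the locus of tangent lines to ${\bf V}(F)$, so ${\bf V}(F)^* = {\bf V}(D)$. For the reflexivity statements ${\bf V}(D)^* = {\bf V}(F)$ and $d\circ f = 1$, I would appeal to the generic smoothness / biduality theorem for plane curves: over an algebraically closed field a reduced irreducible plane curve $C$ satisfies $C^{**}=C$ with $d\circ f = \mathrm{id}$ provided $C$ is \emph{reflexive}, and by the theorem of Monge--Segre--Wallace this holds automatically in characteristic $0$ and fails only for \emph{strange} curves in positive characteristic. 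Here I can use the explicit normal forms from Theorem~\ref{Delta theorem}: since ${\bf V}(F)$ is a rational curve with an explicit parametrization whose Gauss map is separable (one can read off from the parametrization by $[t:u]$ that the induced map to the dual plane is generically injective and not constant in any coordinate), reflexivity holds even in positive characteristic, so biduality applies.

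The main obstacle will be handling positive characteristic correctly, since the paper explicitly flags that ``the use of duality requires a little more care.'' In characteristic $p$ the Gauss map can be inseparable and biduality can fail, so I cannot simply cite the characteristic-zero reflexivity theorem as a black box. The plan to overcome this is to use the explicit rational parametrization of ${\bf V}(F)$ afforded by $[t:u]\mapsto[t\phi(t,u):-u\phi(t,u):\psi(t,u)]$ and check directly that the composite $d\circ f$, as a rational self-map of ${\bf V}(F)$, is the identity by computing it on the parametrization; this reduces biduality to a concrete (if lengthy) verification that the tangent line to the envelope ${\bf V}(D)$ at the point $f([t:u:v])$ dualizes back to $[t:u:v]$. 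Equivalently, I would verify that the parametrization of ${\bf V}(F)$ is not \emph{strange}, i.e.\ its tangent lines do not all pass through a common point, which by Theorem~\ref{Delta theorem} and Lemma~\ref{tedious} can be checked on the explicit equations and which guarantees separability of the Gauss map. Once separability is in hand, Euler's relation together with the chain rule for the homogenized discriminant $D$ forces $d\circ f = 1_{{\bf V}(F)}$ and the two equalities of curves, completing the proof.
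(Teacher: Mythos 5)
Your first half --- interpreting $f$ as the Gauss map, proving ${\bf V}(F)^* \subseteq {\bf V}(D)$ from the discriminant description of $D$, and upgrading to equality --- is in substance the paper's first half, except that the paper makes the tangency computation explicit (Euler's formula plus the vanishing of the partials of the restricted pencil at $[t:u]$) and, crucially, gets the equality ${\bf V}(F)^* = {\bf V}(D)$ not from ``repeated member $=$ tangent line'' but from the fact that ${\bf V}(D)$ is irreducible (it is rational by Theorem~\ref{parabola theorem} and Corollary~\ref{Delta corollary}), so the curve ${\bf V}(F)^*$ it contains must be all of it. That care matters: a repeated root of the restricted pencil can also arise from lines through the singular point $[0:0:1]$ of ${\bf V}(F)$, so the equivalence you assert is not literally correct, though containment plus irreducibility repairs it.

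The genuine gap is in your treatment of positive characteristic, which is precisely where the lemma needs care. Both criteria you offer for separability of the Gauss map are false: (a) generic injectivity does not imply separability, because purely inseparable maps (e.g.\ Frobenius) are bijective on closed points; and (b) non-strangeness does not imply separability either. One counterexample kills both at once: in characteristic $p$ the Fermat curve $X^{p+1}+Y^{p+1}+Z^{p+1}=0$ is smooth and not strange, its Gauss map $[x:y:z]\mapsto[x^p:y^p:z^p]$ is injective on points, and yet the curve is not reflexive --- biduality fails and the composite of the two dual maps is a Frobenius twist, not the identity. So your chain ``non-strange (or generically injective) $\Rightarrow$ separable Gauss map $\Rightarrow$ reflexive'' collapses, and the only route you have left is the ``concrete (if lengthy) verification'' of $d\circ f=1$ on the parametrization, which you never actually carry out and which is not, as you claim, equivalent to the non-strangeness check. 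The paper avoids all of this by invoking Pardini's criterion \cite[Propositions~1.2 and~1.5]{Pardini}: over an algebraically closed field, ${\bf V}(D)^*={\bf V}(F)$ and $d\circ f=1_{{\bf V}(F)}$ follow once the curve $F$ has only \emph{finitely many flexes}, and it verifies that hypothesis by an explicit Hessian computation for $G=2T^2U-V(T^2+U^2)$ (after normalizing the bisector field), finding at most three flexes, with characteristic $3$ handled separately. To repair your argument, replace your separability criteria with the finitely-many-flexes condition (equivalently: the Hessian does not vanish identically along the curve) and do that computation.
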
 
  
% \begin{lemma} \label{Pardini lemma} Suppose $\k$ is algebraically closed, and let $\B$ be a nonlinear bisector field.  Let 
 % ${\mathcal{C}}$ be the curve defined by  $F$, and let $f:{\mathcal{C}} \rightarrow {\mathcal{C}}^*$ and $f^*:{\mathcal{C}}^* \rightarrow {\mathcal{C}}^{**}$ be the dual maps. Then 
%  ${\mathcal{C}}^{**}={\mathcal{C}}$ and $f^*\circ f = 1_{{\mathcal{C}}}$.
 %\end{lemma} 
 
 \begin{proof} 
 We first prove ${\bf V}(F)^* \subseteq {\bf V}(D)$.  
Let $[t:u:v] \in {\bf V}(F)$. Then $\psi(t,u)=v\phi(t,u)$ and so $\phi(t,u) \ne 0$ since $\phi$ and $\psi$ do not share a common zero in $\P^1(\k)$.    Let $$x = F_T(t,u,v) \:\:\:\:\:\: y = -F_U(t,u,v) \:\:\:\:\:\:  z=F_V(t,u,v).$$
 We show $D(x,y,z) =0.$  To prove this, since 
 $D(x,y,z)$ is the discriminant of $$P(T,U) :=T \phi(T,U)x  -U\phi(T,U)y + \psi(T,U)z,$$ 
 it suffices to show 
  $[t:u]$ is a repeated root of   $P(T,U)$.  Let $n = \deg F$. By Euler's Formula, $nF = TF_T+UF_U+VF_V$.  Using this and the fact that   $v\phi(t,u)=\psi(t,u)$, we have  $$P(t,u) =\phi(t,u)( tx -uy+vz) = n\phi(t,u)F(t,u,v) =0,$$ and so $[t:u]$ is a root of $P$, which implies $tx-uy=-vz$ since $\phi(t,u) \ne 0$.  
 %$x=F_T(t,u,v) = v\phi_T(t,u) -\psi_T(t,u)$.  
% $y = -F_U(t,u,v) =v\phi_U(t,u)-\psi_U(t,u).$ 
% $z = F_V(t,u,v) = \phi(t,u)$.  
% So $t(v\phi_T(t,u) -\psi_T(t,u))-u(v\phi_U(t,u)-\psi_U(t,u))+v \phi(t,u) = 0.$
 To see that this root is repeated, in the case in which $ u \ne 0$ it is enough to show $P_T(t,u) =0$, and in the case that $t \ne 0$ that $P_U(t,u) =0$.  We show the former and omit the analagous proof for the latter. 
 Observe  \begin{eqnarray*} 
 P_T(t,u) &=& xt\phi_T(t,u) +x\phi(t,u) -yu\phi_T(t,u) +z\psi_T(t,u) \\
 & = & \phi_T(t,u)(xt-yu) + x\phi(t,u) +z\psi_T(t,u) \\
 &=& \phi_T(t,u)(-zv)+ x\phi(t,u) +z\psi_T(t,u) \\ & = & z(\psi_T(t,u)-v\phi_T(t,u)) +x\phi(t,u) \\
 & = & -zF_T(t,u,v) +x\phi(t,u) \:\: = \:\: -zx +x\phi(t,u) \:\: = \:\: 0
 %
% 
 % (v\phi_T(t,u) -\psi_T(t,u))(t\phi_T(t,u)+\phi(t,u))-
  \end{eqnarray*}
  (The last equality follows from the fact that   $z = F_V(t,u,v) = \phi(t,u)$.)
This proves that $[t:u]$ is a repeated root of $P(T,U)$, and so $D(x,y,z) = 0$. Thus ${\bf V}(F)^* \subseteq {\bf V}(D)$.
 Since the curve $D$ is by Theorem~\ref{parabola theorem} and Corollary~\ref{Delta corollary} a rational, hence irreducible, curve (it is the projective closure of a rational curve)
%
% Thus the dual of the curve $F$ is a subset of the curve $\Delta$. 
%Since any singularity on the curve $\Delta$ is by Theorem~\ref{parabola theorem} and
%Corollary~\ref{Delta corollary}  
%a cusp and a cusp can have only one irreducible component of a curve pass through it \cite[Exercise~3.22(c), p.~92]{Fulton}\footnote{$m_P(GH,L) = m_P(G,L)+m_P(H,L) \geq 4$. By Bezout's theorem, $m_P(H,L)=m_P(G,L) \geq 2$, since $L$ is tangent at $P$. Thus $P$ is not a cusp.}, the curve $\Delta$ is irreducible. 
we obtain ${\bf V}(F)^* = {\bf V}(D)$. 
 
 To prove the remaining assertions that ${\bf V}(D)^* = {\bf V}(F)$   and $d \circ f = 1_{{\bf V}(F)}$, 
  it suffices by \cite[Propositions~1.2 and~1.5]{Pardini} 
 to show $F$ has finitely many flexes. 
 We show in fact that $F$ has at most three flexes. 
 If $\B$ is quadratic, then  ${\bf V}(F)$ is a parabola by Theorem~\ref{parabola theorem}   and hence has no flexes, so we assume $\B$ is cubic.  
 Since $\k$ is algebraically closed, 
we may assume as in the proof of Theorem~\ref{Delta theorem} that $\B$ is in standard form with center $(0,1/2)$ and coefficient~$-1$.   
  By Lemma~\ref{orthogonal},  $\Phi(T,U) = T^2+U^2$ and $\Psi(T,U) = 2T^2U$. Thus $F(T,U,V)$ is a factor of the dual polynomial of $\B$, $G(T,U,V)= 2T^2U - V(T^2+U^2)$.  
 The Hessian of $G$ is the matrix consisting of the second partials $$H=\begin{bmatrix}
 4U-2V & 4T & -2T \\
 4T & -2V & -2U \\
 -2T & -2U & 0 \\
 \end{bmatrix}.$$ 
 %The determinant of this matrix is $8(4T^2U+T^2V-2U^3+U^2V).$ 
The flex points and singularities on  the curve $G$ are the points in $\P^2(\k)$ on the intersection of the curve $G$ and   the curve   $\det(H) = 16U(2T^2-U^2)+8V(T^2+U^2)$  \cite[Theorem~1.35, p.~14]{Hir}.  Eliminating $V$ shows   the  points  on this intersection are  the zeroes of $2U(3T^2-U^2) = 0$. 
Two such zeroes are $[0:0:1]$ and $[1:0:0]$.  
 If char $\k = 3$, then 
 these are the only points on the intersection. If  char $\k \ne 3$  and 
  $\theta$ is a  root of $3Z^2-1$, then $[\theta:1:1/2]$ and $[-\theta:1:1/2] $ are the only other points on this intersection. 
  The only singularity on the curve $G$ is $[0:0:1]$, so all the other points listed here are flex points on the curve  $G$.   Thus $G$, and hence $F$ also, has at more three flexes. 
Regardless of whether $\B$ is cubic or quadratic, we have established   the curve   $F$ has  finitely many flexes, and   so the lemma is proved.  \end{proof}

%\begin{definition} A bisector in a bisector field  $\B$ is {\it reflexive} if it is of the form \begin{center}
%$tX-uY+\Upsilon(t,u)=0$ for some $[t:u] \in \P^1(\k)$.  
%\end{center}
%\end{definition} 

For the next theorem, which is  illustrated by Figures~1(a)--(c), we recall the moving  bisectors   from Definition~\ref{moving}.

% $F(t,u,v) = \psi(t,u) - v\phi(t,u)=0,$ and so in a cubic bisector field every bisector is reflexive while in a quadratic bisector field, every affine bisector is reflexive, but not conversely.  The slopes of the reflexive bisectors passing through a point $(x,y)$ are the zeroes in $\P^1(\k)$ of the polynomial $Tx-Uy+\Upsilon(T,U)=0$; equivalently, these slopes are the zeroes of $$P_{(x,y)}(T,U) = T\phi(T,U)x-U\phi(T,U)y+\psi(T,U).$$ 

\begin{theorem} \label{envelope theorem} The lines tangent to the boundary of a  nonlinear bisector field $\B$ are the moving  bisectors in $\B$. 
%
% bisectors $tX-uY+\Upsilon(t,u)$ in $\B$, where $[t:u]$ ranges over the points in $\P^1(\k)$ for which  $\phi(t,u) \ne 0$.  %A line $\ell:tX-uY+v=0$ is tangent to the discriminant  of $\B$ if and only if $F(t,u,v) =0$, and hence all the lines tangent to the discriminant are in~$\B$.  
\end{theorem}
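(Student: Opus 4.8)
The plan is to obtain the theorem by reading the biduality of Lemma~\ref{Pardini lemma} through the dictionary supplied by Proposition~\ref{new moving bisector}. By Definition~\ref{Delta notation} the boundary is the affine curve $\Delta = 0$, whose projective closure is ${\bf V}(D)$, and the substance of Lemma~\ref{Pardini lemma} is that ${\bf V}(D)$ and ${\bf V}(F)$ are mutually dual via the gradient maps $d$ and $f$, which satisfy $d \circ f = 1_{{\bf V}(F)}$. Here $d$ sends a smooth point $q$ of ${\bf V}(D)$ to the dual point of the tangent line to ${\bf V}(D)$ at $q$, while $f$ sends a point $p$ of ${\bf V}(F)$ to the point of ${\bf V}(D)$ at which the line with dual point $p$ is tangent. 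Since Proposition~\ref{new moving bisector} identifies the moving bisectors with the lines whose dual points lie on ${\bf V}(F)$, the equalities ${\bf V}(F)^* = {\bf V}(D)$ and ${\bf V}(D)^* = {\bf V}(F)$ translate directly into the asserted identity between the tangent lines of the boundary and the moving bisectors.

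First I would reduce to the case $\k = \overline{\k}$. The polynomials $\phi$, $\psi$, and hence $F$ and $D$, have coefficients in $\k$, the property of being a moving bisector is detected by $F$ (Proposition~\ref{new moving bisector}), and tangency of a $\k$-line to the boundary at a $\k$-point is detected by $\Delta$; moreover Corollary~\ref{Bis cor} guarantees that the extension $\overline{\B}$ is again nonlinear, so Lemma~\ref{Pardini lemma} applies to it. Working over $\overline{\k}$, for the forward inclusion let $\ell\colon tX - uY + v = 0$ be a moving bisector, so that $p = [t:u:v] \in {\bf V}(F)$ by Proposition~\ref{new moving bisector}; then $q = f(p)$ lies on ${\bf V}(F)^* = {\bf V}(D)$, and $d \circ f = 1_{{\bf V}(F)}$ forces the tangent line to ${\bf V}(D)$ at $q$ to have dual point $d(q) = p$, i.e.\ that tangent line is exactly $\ell$. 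Crucially $F_V = -\phi$, so $F_V(p) = -\phi(t,u) \ne 0$ for a moving bisector, whence $q = f(p)$ has nonzero last coordinate and is an affine point; the tangency therefore occurs on the affine curve $\Delta = 0$. For the reverse inclusion, a tangent line to ${\bf V}(D)$ at a smooth affine point $q$ has dual point $d(q) \in {\bf V}(D)^* = {\bf V}(F)$, and since $q$ is affine this dual point is not $[0:0:1]$, so by Proposition~\ref{new moving bisector} the tangent line is a moving bisector.

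It then remains to treat the finitely many points where the clean duality breaks down and to descend to $\k$. The singular point $[0:0:1]$ of ${\bf V}(F)$ represents the line at infinity rather than a moving bisector and is excluded since moving bisectors are affine lines; the cusps of ${\bf V}(D)$ (at most three, by Corollary~\ref{Delta corollary}) are images under $f$ of the flex points of ${\bf V}(F)$ exhibited in the proof of Lemma~\ref{Pardini lemma}, and because those flexes are smooth points of ${\bf V}(F)$ their cuspidal tangents are again moving bisectors, so no spurious tangent lines arise. For the descent I would use that $f$ is defined over $\k$, so that the affine tangency point $q = f(p)$ of a $\k$-rational moving bisector is itself $\k$-rational, and conversely that a $\k$-line tangent at a $\k$-point has $\k$-rational dual point on ${\bf V}(F)$; insensitivity of intersection multiplicity to the extension $\k \subseteq \overline{\k}$ at a $\k$-point then yields the theorem over $\k$.

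The hard part will be precisely this bookkeeping at the exceptional loci together with the care required for duality in positive characteristic: one must confirm that the only points of ${\bf V}(F)$ not corresponding to affine moving bisectors are accounted for and that no tangency is lost by passing to a proper extension of $\k$. The most delicate case is ${\rm char}\,\k = 3$, where Theorem~\ref{Delta theorem} shows the boundary acquires a single higher-order cusp and the flex structure degenerates; there I would need to check that the duality $d \circ f = 1_{{\bf V}(F)}$ of Lemma~\ref{Pardini lemma} still governs the tangent lines. The conceptual heart of the argument is otherwise immediate from the lemma.
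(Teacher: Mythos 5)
Your overall strategy --- reading the biduality of Lemma~\ref{Pardini lemma} through the dictionary of Proposition~\ref{new moving bisector}, proving the statement over $\overline{\k}$ and then descending --- is the same as the paper's, and your treatment of the smooth locus is correct; in particular the observation that $F_V=-\phi$ forces the tangency point $f(p)$ to be affine is a nice way to make explicit something the paper leaves implicit. The genuine gap is at the cusps of ${\bf V}(D)$. The identity $d\circ f=1_{{\bf V}(F)}$ in Lemma~\ref{Pardini lemma} is an identity of \emph{rational} maps: it says nothing at a point $p\in{\bf V}(F)$ whose image $f(p)$ is a singular point of ${\bf V}(D)$, since $d$ is simply undefined there. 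So for a moving bisector whose tangency candidate $f(p)$ is a cusp, your forward argument does not show that the line dual to $p$ is the cuspidal tangent; and in the reverse direction, your claim that the tangent line at a cusp of ${\bf V}(D)$ is the dual of a flex of ${\bf V}(F)$ is a conormal-variety (limit-of-tangents) form of biduality that is strictly stronger than the three assertions of Lemma~\ref{Pardini lemma}, and is exactly the kind of statement that needs care in positive characteristic. You assert this flex/cusp correspondence rather than prove it, and you explicitly defer the characteristic $3$ case, so the cubic case of the theorem --- where the boundary really does have cusps --- is not established.

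This is precisely where the paper does concrete work instead of invoking general duality: it normalizes to $\Phi=T^2+U^2$, $\Psi=2T^2U$, computes the cusps $[0:2:1]$ and $[\pm\theta:-1/4:1]$ (with $16\theta^2=27$) together with their unique tangent lines, checks that each such tangent has dual point on ${\bf V}(F)$ (so cuspidal tangents are moving bisectors), and then, for the converse, shows by direct computation that each cusp has \emph{at most one} bisector of $\B$ through it; since a moving bisector with dual point $p$ passes through $f(p)$ by Euler's formula and the cuspidal tangent is itself a bisector, the two must coincide. Note also that your descent step inherits the same weakness: for a tangency at a singular $\k$-point the paper needs the explicit local form $\Delta(X,Y)=(tX-uY)^e+g(X,Y)$, $\deg g>e$, to see that the tangent's coefficients lie in $\k$. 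To complete your proposal you would either have to reproduce these computations or genuinely prove the flex/cusp biduality (including characteristic $3$) from Pardini's results; as written, the theorem is only proved for tangencies at smooth points of the boundary.
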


%\begin{theorem} \label{envelope theorem} Let $\B$ be a nonlinear bisector field. A line $\ell:tX-uY+v=0$ is tangent to the discriminant  of $\B$ if and only if $F(t,u,v) =0$, and hence all the lines tangent to the discriminant are in~$\B$.  \end{theorem}

\begin{proof} 
%Let $D(X,Y) =\Delta(X,Y)$.  
We first prove the theorem in the case in which  $\k$ is algebraically closed. 
 Suppose $\ell:tX-uY+v$ is a line tangent to the boundary  of $\B$ at the point $(x,y)$. To prove that $\ell$ is a moving bisector, it suffices by Proposition~\ref{new moving bisector} to show  $F(t,u,v)=0$.  Now $0=\Delta(x,y) = D(x,y,1)$ and so $[x:y:1] \in {\bf V}(D)$.  
 Let $f$ and $d$ be as in Lemma~\ref{Pardini lemma}. 
 %By Lemma~\ref{dual of F}, 
%${\bf V}(F)^*={\bf V}(D)$, and so $[x:y:1] \in {\bf V}(D)$. 
 If $[x:y:1]$ is nonsingular, then  $[t:u:v] = d([x:y:1])$. By Lemma~\ref{Pardini lemma}, $[t:u:v] \in {{\bf V}(D)}^{*}={\bf V}(F)$,  and so $F(t,u,v)=0$ and $\ell$ is a moving bisector.
 
  It remains to consider the case in which $[x:y:1]$ is singular.  Since in this case $\Delta$ is a singular curve, Theorems~\ref{parabola theorem} and~\ref{Delta theorem} imply $\B$ is a cubic bisector field and  
  $[x:y:1]$ is a cusp on  $D$. Hence $\ell:tX-uY+v$ is the unique tangent line 
  to $D$ at $[x:y:1]$. 
   Since $\k$ is algebraically closed, we may assume  
 as in the proof of Theorem~\ref{Delta theorem} that $\Phi(T,U) = T^2+U^2$ and $\Psi(T,U) = 2T^2U$. Since $\B$ is cubic, $\Phi = \phi$ and $\Psi = \psi$, and hence the reduced dual polynomial is $F(T,U,V) = 2T^2U - V(T^2+U^2)$. 
As shown in the proof of Theorem~\ref{Delta theorem}, the singularity $[x:y:1]$ of $D$ must be either 
 $[0:2:1]$, $ [ \theta:-1/4:1]$, or $ [ -\theta:-1/4:1]$, where $16\theta^2=27$. Using the ideas from the proof of Theorem~\ref{Delta theorem}, we will find the tangent lines at each of these points and show they are moving bisectors.
 
 The argument in the proof of Theorem~\ref{Delta theorem} shows the tangent line to $\Delta$ at $(0,2)$ is the image of the line $X=0$ under the transformation of the plane  $(x,y) \mapsto (x,y+2)$, and hence is again the line $X=0$. This line has dual point $[1:0:0]$, and since $F(1,0,0)=0$, this line is a moving bisector by Proposition~\ref{new moving bisector}.   The tangent line to $\Delta$ at $(\theta,-1/4)$ is the image of the line $X=0$ under the transformation $(x,y) \mapsto (x-(4/3)\theta y+\theta,y-(1/4))$, which is the line $(3/2)X+2\theta Y-\theta=0$. This line has dual point $[3/2:-2\theta:-\theta]$, which is a zero of $F$, and hence this tangent line is also a moving bisector by Proposition~\ref{new moving bisector}.
  Similarly, the tangent line to $\Delta$ at $(-\theta,-1/4)$ is the image of $X=0$ under the transformation $(x,y) \mapsto (x+(4/3)\theta y -\theta,y-(1/4))$, which is the line $(3/2)X-2\theta Y+\theta=0$. 
The dual point $[3/2:2\theta:\theta]$ of this line is on $F$, so this line is also a moving bisector. 
  This proves that  the lines tangent to the boundary of~$\B$ are moving bisectors in $\B$.  

Conversely,   still under the assumption that $\k$ is algebraically closed, suppose the line $\ell:tX-uY+v$ is a moving bisector. We show $\ell$ is tangent to the boundary of $\B$.  By Proposition~\ref{new moving bisector},  $F(t,u,v) =0$, and so  $[t:u:v]$ is a nonsingular point on the curve $F$ since the only singularity of this curve is $[0:0:1]$. Thus the rational map $f$ from Lemma~\ref{Pardini lemma} is defined at $[t:u:v]$. Let $[x:y:z] =f([t:u:v])$. 
 %Then the tangent line to cl$(\A^*)$ at $[t:u:v]$ is 
 % $xT+yU+zV=0$.  
 We will show $tX-uY+vZ=0$ is tangent to  the curve $D$ at $[x:y:z]$.

 %Since $\A^*$ is nonsingular, we have  $$f_{\A^*}([t:u:v]) = \in \partial \A.$$ 

 Let 
  $t'X-u'Y+v'Z=0$ be the line tangent to the curve $D$  at $[x:y:z]$.  
 % Suppose first $[x:y:z]$ is a nonsingular point on cl($\partial \A$), and 
%
 %Let
 %$[x:y:z]$ be a point on $\partial A$, and 
 If $[x:y:z]$ is nonsingular, 
 then $d([x:y:z]) = [t':u':v']$, so Lemma~\ref{Pardini lemma} and the fact that  $f([t:u:v]) = [x:y:z]$ implies 
 $$[t:u:v]=d(f([t:u:v])) = d([x:y:z]) = [t':u':v'].$$ Therefore,
the line $tX-uY+v=0$ is the same as the line $t'X-u'Y+v=0$, and so  
  in the case in which $[x:y:z]$ is nonsingular, the line $tX-uY+vZ=0$ is tangent to the curve $D$ at $[x:y:z].$

 Now suppose $[x:y:z] =f([t:u:v]) $ is a singular point on the curve $D$.
 We show that $tX-uY+vZ=0$ is tangent to $[x:y:z]$ also in this case.  
  By Theorem~\ref{parabola theorem} and Corollary~\ref{Delta corollary},  
    $\B$ is cubic since its boundary has a singular point, and so 
   as in the proof of Theorem~\ref{Delta theorem}, the singularities of $D$ are 
   \begin{center}$[0:2:1],$ \: $[\theta:-1/4:1]$, \: $[-\theta:-1/4:1]$, \: where $16\theta^2=27$.
   \end{center}
   Thus $[x:y:z]$ is one of these points, and 
  the bisector $tX-uY+v=0$ goes through this point. As established earlier, there is a unique tangent line to $D$ at each of these points, and as shown at the beginning of this proof, each tangent line to $D$ is a bisector in $\B$.  Thus to prove the claim that $tX-uY+v=0$ is tangent to $D$ at the singularity $[x:y:z]$, it suffices to show each of these three singularities has at most one bisector passing through it. 
  
 We prove this for the singularity $[0:2:1]$ and omit the very similar proofs for the singularities    $[\pm \theta:-1/4:1]$.
  Since the bisector field $\B$ is cubic, all the bisectors in $\B$ are moving bisectors, and so  by Proposition~\ref{new moving bisector}, the only bisectors passing through $(0,2)$ are of the form $t'X-u'Y+v'$, where  
   $-2u'\Phi(t',u')+\Psi(t',u') =0$. Since $\Phi(T,U) = T^2+U^2$ and $\Psi(T,U) = 2T^2U$, it follows that $2u'((t')^2+(u')^2) = 2(t')^2u'$, so that $u' =0$. It cannot be that $t' =0$, so since $0=F(t',u',v') = 2(t')^2u' - v'((t')^2+(u')^2) = -v'(t')^2$, we have $v'=0$. This implies $t'X-u'Y+v'=0$ is the line $X=0$, and so there is only one bisector through the singularity $[2:1:1]$. 
 This proves that
  if $f([t:u:v])=[x:y:z]$ is a singular point on $D$, the     
       line $tX-uY+vZ=0$ is tangent to the curve $D$ at $[x:y:z]$, which verifies the theorem if $\k$ is algebraically closed.   %Therefore, every line $tX-uY+v=0$ with $F(t,u,v)=0$ is tangent to the discriminant $D=0$. 
     %   By (\ref{partial eq}), $z=F_V(t,u,v) = t^2+u^2=1$, which proves $tX-uY+v=0$ is tangent to $D =0$ at $[x:y:1]$.

Now suppose $\k$ is not necessarily algebraically closed. Let $\Upsilon = \psi/\phi \in \k(T,U,V)$.  By what we have established, when extended to the plane $\overline{\k} \times \overline{\k}$, the moving bisectors  $tX-uY+\Upsilon(t,u)=0$, where $[t:u] \in \P^1(\k)$ and $\phi(t,u) \ne 0$,   are  tangent to ${\bf V}(D)$. If $tX-uY+\Upsilon(t,u)=0$ is such a line, then with $(x,y) \in \overline{\k} \times \overline{\k}$ the point of tangency with the curve $\Delta$,  since $f([t:u:v])=[x:y:1]$ and the polynomials that define the map $f$ are in $\k[T,U,V]$, it follows that $(x,y)$ is a $\k$-rational point, and so $tX-uY+\Upsilon(t,u)$ is tangent to the curve $\Delta$ in $\k^2$.  

Conversely, suppose $tX-uY+v=0$ is a line in $ {\overline{\k}} \times  {\overline{\k}}$ tangent to the boundary at a point $(x,y) \in \k^2$. 
By what we have shown in the first part of the proof, $tX-uY+v$ is a moving bisector in $ {\overline{\k}} \times  {\overline{\k}}$. Thus $\phi(t,u) \ne 0$ and $v = \Upsilon(t,u)$, and to complete the proof we need only verify that the coefficients $t$ and $u$ can be chosen from $\k$.  
 If $(x,y)$ is nonsingular, then since $\Delta$ has coefficients in $\k$ and $f([t:u:v])=[x:y:1]$, it follows that the coefficients of $tX-uY+v$ can be chosen from $\k$.  If $(x,y)$ is singular, then over $\overline{\k}$ this point is a cusp. After a translation, we can assume $(x,y)  = (0,0)$ and hence $v =0$. Since this singularity is a cusp, 
  $\Delta(X,Y) = (tX-uY)^e+g(X,Y)$, where $e>1$ and $g$ is a polynomial with $\deg g > e$.  
Since the coefficients of $\Delta$ are in $\k$ and  $t^au^b \in \k$ for all $a,b \geq 0$ with $a+b = e$, it follows that $tu^{-1} = t^e(t^{e-1}u)^{-1} \in \k$ if $u \ne 0$ and similarly $t^{-1}u \in \k$ if $t \ne 0$.  Thus 
 $t$ and $u$ can be chosen in $\k$, which  completes the proof.  
\end{proof} 

  Since every bisector in   a cubic bisector field  is a moving bisector, Theorem~\ref{envelope theorem} implies 
  
  \begin{corollary} \label{cubic tangents} A cubic bisector field   is the set of lines tangent to its boundary.  \qed
  \end{corollary}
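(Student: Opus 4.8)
The plan is to combine Theorem~\ref{envelope theorem} with the characterization of cubic bisector fields recorded just after Proposition~\ref{new moving bisector}. First I would recall that, by Theorem~\ref{new} together with Proposition~\ref{new moving bisector}, a bisector field $\B$ is cubic precisely when every bisector of $\B$ is a moving bisector in the sense of Definition~\ref{moving}; equivalently, the reduced dual polynomial $F$ has degree three and the dual points of all bisectors lie on the reduced dual curve ${\bf V}(F)$. Thus for a cubic $\B$ the set of bisectors coincides with the set of moving bisectors.

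Next I would invoke Theorem~\ref{envelope theorem}, which asserts that for any nonlinear bisector field the moving bisectors are exactly the lines tangent to the boundary. Since a cubic bisector field is in particular nonlinear, this theorem applies, and so the moving bisectors of $\B$ are precisely the tangent lines to the boundary of $\B$.

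Chaining these two identifications yields the corollary: the bisectors of $\B$ equal its moving bisectors, which in turn equal the lines tangent to the boundary. Because the argument is a direct concatenation of two already-established equalities of sets, there is no substantive obstacle to overcome. The only point requiring a moment's care is that both containments are genuinely needed, namely that every bisector is tangent to the boundary (using that cubicity forces every bisector to be a moving bisector) and that every tangent line is a bisector (using the converse half of Theorem~\ref{envelope theorem}); both directions are supplied verbatim by the cited results, so the proof is immediate.
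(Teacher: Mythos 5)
Your proposal is correct and follows essentially the same route as the paper: the paper's proof is exactly the observation that in a cubic bisector field every bisector is a moving bisector (the fact noted after Proposition~\ref{new moving bisector}), combined with Theorem~\ref{envelope theorem} identifying moving bisectors with the tangent lines to the boundary. Your additional care in noting that both containments come from the two directions of Theorem~\ref{envelope theorem} is a faithful, slightly more explicit rendering of the same one-line argument.
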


%  If $\B$ is a quadratic bisector field, then Lemma~\ref{tedious2} implies there is exactly one non-affine bisector tangent to the boundary.  Thus, while it is true that every affine bisector in $\B$ is tangent to the boundary, there is one point on the boundary whose tangent line is not an affine bisector.  

 % For any bisector field, 
%the affine  bisectors  can be tracked using the notion of a moving line, where as in  \cite{CSC} 
%a {\it moving line} is a family of lines in the plane,  $$\left\{f(t,u)X+g(t,u)Y+h(t,u):[t:u] \in \P^1(\k)\right\},$$ where $f,g,h$ are homogeneous polynomials in $\k[T,U]$ that do not all share a common zero. 

  \section{Classification of bisector fields} 
  
  Using the results of the previous sections, we classify bisector fields up to affine equivalence. For linear and quadratic bisector fields, we obtain a classification that is independent of the nature of the field $\k$.  The case of a cubic bisector field is more complicated and depends on the choice of field $\k$. For cubic bisector fields we  obtain classifications when~$\k$ is an algebraically closed field or a real closed field. We
  give some partial results for finite fields, 
and we   leave as an open question the classification of cubic bisector fields over other fields.

\begin{theorem} \label{linear class} A set of lines in $\k^2$ is a linear bisector field  if and only if it is the union of  two pencils of parallel lines and a pencil of non-parallel lines. Up to affine equivalence, there is only linear bisector field over $\k$.  
\end{theorem}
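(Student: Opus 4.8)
The plan is to prove both the equivalence and the uniqueness at once by reducing every linear bisector field to a single explicit model and reading off its structure there. I will take as the model the bisector field $\B_0=\Bis(Q_0)$ of the parallelogram $Q_0$ with sides $X\pm Y=\pm 1$ (vertices $(\pm 1,0),(0,\pm 1)$), which is a linear bisector field by Corollary~\ref{Bis cor}(3). The diagonals of $Q_0$ join $(1,0)$ to $(-1,0)$ and $(0,1)$ to $(0,-1)$, so they are exactly the coordinate axes; since the diagonals of a parallelogram meet at its centroid and form a $\B$-pair (Section~2), $\B_0$ is already in standard form with center the origin. Hence Lemma~\ref{orthogonal} gives $\Psi_{\B_0}=0$ and $\Phi_{\B_0}=T^2-\mu_0 U^2$, and forcing the side slopes $[1{:}1],[-1{:}1]$ (which are null, hence zeros of $\Phi_{\B_0}$) to vanish yields $\mu_0=1$. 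Applying Theorem~\ref{cubic} to the dual polynomial $\Psi-V\Phi=-V(T^2-U^2)$ then shows that $\B_0$ consists precisely of the lines through the origin (the dual points with $v=0$) together with the pencils of slopes $[1{:}1]$ and $[-1{:}1]$ (the dual points with $\Phi(t,u)=0$). Thus $\B_0$ is a linear bisector field with exactly the claimed structure.

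For the forward implication together with uniqueness, I would start from an arbitrary linear bisector field $\B$. By Corollary~\ref{Bis cor}(3), $\B=\Bis(Q)$ for a parallelogram $Q$, whose diagonals form a non-parallel $\B$-pair passing through the centroid, i.e.\ through the center of $\B$. Mapping these diagonals to the coordinate axes by an affine transformation puts $\B$ in standard form with center the origin, so $\Psi=0$ and $\Phi=T^2-\mu U^2$ by Lemma~\ref{orthogonal}. Because $\B$ is a genuine linear bisector field, Theorem~\ref{new} supplies two pencils of parallel lines, whose slopes are the two zeros of $\Phi$ in $\P^1(\k)$; hence $\mu$ is a square, say $\mu=m^2$. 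Exactly as for the model, Theorem~\ref{cubic} then identifies $\B$ as the pencil of lines through the origin together with the pencils of slopes $[m{:}1]$ and $[-m{:}1]$, and the linear map $(x,y)\mapsto(x,y/m)$ carries $\B$ onto $\B_0$. This simultaneously shows that $\B$ has the stated form and that every linear bisector field is affinely equivalent to $\B_0$.

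For the reverse implication I would take any set $\mathcal S$ that is the union of two pencils of parallel lines with distinct slopes $s_1,s_2$ and a pencil of lines through a finite point $c$. A translation sending $c$ to the origin, followed by an invertible linear map sending the directions of $s_1,s_2$ to those of $[1{:}1]$ and $[-1{:}1]$ (such a map exists since $s_1\neq s_2$), carries $\mathcal S$ exactly onto $\B_0$. As affine images of bisector fields are again bisector fields of the same type (Section~5), $\mathcal S$ is a linear bisector field. Combining the two directions gives the equivalence, and the reduction to $\B_0$ gives the final assertion that, up to affine equivalence, there is a unique linear bisector field over $\k$.

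The step I expect to be the main obstacle is pinning down the \emph{exact} set of bisectors in standard form---that the non-null bisectors are the \emph{full} pencil through the center, not some proper subset. This rests on the identity $\Psi=0$, which in turn needs the center to coincide with the intersection of the chosen pair of axes; the cleanest justification is to use the diagonals of the parallelogram, which are guaranteed to meet at the centroid. A secondary point requiring care is verifying that $\mu$ is a square: this is precisely what makes the two parallel pencils $\k$-rational, and it is the feature separating a bona fide linear bisector field (two pencils, by Theorem~\ref{new}) from a degenerate single pencil through a point, which Definition~\ref{bf} forbids.
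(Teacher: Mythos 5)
Your proof is correct, but it follows a genuinely different route from the paper's. The paper argues in arbitrary coordinates: for the forward direction it uses the reduced dual polynomial (for a linear field $\phi$ is a nonzero constant and $\psi(T,U)=aT-bU$), so Theorem~\ref{cubic} and Proposition~\ref{new moving bisector} exhibit the non-null bisectors as a single pencil through a finite point; for the converse it builds a parallelogram $Q$ directly out of the three given pencils and checks by hand that $\B=\Bis(Q)$; uniqueness then follows since any parallelogram can be carried onto any other parallelogram. You instead reduce everything to one explicit model $\B_0=\Bis(Q_0)$: using the diagonals of a parallelogram in $\B$ you pass to standard form with center at the origin, invoke Lemma~\ref{orthogonal} to get $\Psi=0$ and $\Phi=T^2-\mu U^2$, and then make the key observation that linearity (two $\k$-rational pencils of parallel lines, via Theorem~\ref{new} and Lemma~\ref{zeroes}) forces $\mu$ to be a nonzero square, so a diagonal rescaling $(x,y)\mapsto(x,y/m)$ lands exactly on $\B_0$. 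Your route buys a canonical representative and a conceptual explanation of why this classification is field-independent: the coefficient $\mu$, which is a genuine obstruction to affine equivalence in the cubic case (Theorem~\ref{big question lemma}), is automatically a square for linear fields, so the invariant trivializes; it also replaces the paper's ``straightforward to see'' verification in the converse direction by affine invariance plus the model computation. The paper's argument is shorter and its converse is self-contained. One shared caveat: your appeal to Corollary~\ref{Bis cor}(3) to declare $\Bis(Q_0)$ a linear bisector field presupposes that $\Bis(Q_0)$ is a bisector field at all, since that corollary is stated for sets already known to be bisector fields; this fact comes from the companion papers (the results behind Theorem~\ref{ultimate}), and the paper's own proof leans on it in exactly the same way when it concludes ``$\B=\Bis(Q)$, and so $\B$ is a bisector field,'' so it is background both proofs share rather than a gap in yours.
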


\begin{proof} 
If $\B$ is a linear bisector field, then $\B$ contains two pencils of parallel lines by Theorem~\ref{new} and these two pencils contain all the null bisectors. Since $\B$ is linear, $\phi(T,U) $ is a nonzero constant, say $c \in \k$,  and $\psi(T,U) =aT-bU$ for some $a,b \in \k$ where possibly $a=b=0$. 
 Thus
 the reduced dual polynomial $F = \psi - V \phi$ defines
 a pencil ${\mathcal{P}}$ of lines  $ctX-cuY+\psi(t,u) = t(cX-a)+u(cY-b)$, where $[t:u] \in \P^1(\k)$. Each such line passes through $(a,b)$ and is  a moving bisector in $\B$ by Proposition~\ref{new moving bisector}. Also, any line $\ell: tX-uY+v=0$  in $\B$ that is not in the two pencils of parallel lines satisfies $\psi(t,u) = v\phi(t,u)$ by Theorem~\ref{cubic} and Lemma~\ref{zeroes}, and so $\ell$ is in the pencil ${\mathcal{P}}$.   
 Thus $\B$ is the union of two pencils of parallel lines and another pencil that consists of the moving bisectors.  

In light of Theorem~\ref{new}, to prove the converse it suffices to show that a union $\B$  of two pencils of parallel lines and a third pencil whose midpoint is a point $p$ in $\k^2$ is a bisector field. Let $\ell_1$ be the line in the first pencil of parallel lines that passes through $p$, and let $\ell_2$ be the line through $p$ in the second pencil, so that $\ell_1$ and $\ell_2$ meet at $p$. 
 %Define two lines in $\B$ that are parallel to $\ell_i$ to be a $\B$-pair if they share $\ell_i$ as a midline. (Thus $\ell_i$ is paired with itself.)  This gives a pairing on the lines in the two pencils of parallel lines. We define two lines $C$ and $C'$ through $p$ to be a $\B$-pair if $C$ and $C'$ are diagonals of a
Let $A,A'$ be a pair of distinct parallel lines that share $\ell_1$ as a midline,  let $B,B'$ be a pair of distinct parallel lines that share $\ell_2$ as a midline, and let 
 $Q=ABA'B'$. It  is straightforward to  see that each line in  $\B$ bisects the parallelogram $Q$ and that any line that bisects $Q$ occurs in one of the three pencils that comprise $\B$.  Therefore, 
 $\B = \Bis(Q)$, and so  $\B$ is a bisector field.  Since any parallelogram can be transformed into any other parallelogram in the plane, it follows that any two linear bisector fields are affinely equivalent. 
\end{proof}

% \begin{figure}[h] \label{diagonalspar}
 %\begin{center}
% \includegraphics[width=1\textwidth,scale=.09]{Paper5_Point_Parabola.png} 
%  \includegraphics[width=0.45\textwidth,scale=.09]{Paper5_Parabola_cropped.png} 
 %%%%%%%%%\includegraphics[scale=.09]{TwoPairsConeLinesAnti.jpg}
% \end{center}
 %\caption{Over the real numbers, every linear bisector field is affinely equivalent to the bisector field at left, and every quadratic bisector field to the bisector field at right. The center of the bisector field at left is the boundary, while the boundary of the bisector field at right is the parabola.  Bisectors in the same pair are shaded the same. {\bf [RIGHT FIGURE IS UPSIDE DOWN]} }
%\end{figure}

See Figure 1(d) for an illustration of Theorem~\ref{linear class} in the case where $\k$ is the field of real numbers. 
 As with linear bisector fields, the classification of quadratic bisector fields  is independent of the nature of the field~$\k$. 
  
   \begin{theorem}  \label{quad class}  
   A set of lines is a 
 quadratic bisector field if and only if it is affinely equivalent to the union of the set of lines  tangent to the curve $Y-X^2=0$ and the pencil of lines parallel to the line $Y=0$. Up to affine equivalence there is only one quadratic bisector field.

\end{theorem}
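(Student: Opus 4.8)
The plan is to treat the stated set $\mathcal S$ (the tangent lines of $Y=X^2$ together with the pencil parallel to $Y=0$) as the normal form, and to reduce both directions of the equivalence to results already established: Theorem~\ref{parabola theorem}, which puts the boundary of a quadratic bisector field affinely onto $Y=X^2$ while carrying the pencil of null bisectors onto the horizontal pencil, and Theorem~\ref{envelope theorem}, which identifies the moving bisectors with the tangent lines of the boundary. The one structural fact I must add is that, as a set of lines, a quadratic bisector field is the union of its moving bisectors and its null bisectors.

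For the forward implication, let $\B$ be quadratic and let $\ell:tX-uY+v$ be any bisector. If $\ell$ is not null, then $\Phi(t,u)\ne0$ by Lemma~\ref{zeroes}, hence $\phi(t,u)\ne0$ (as $\phi$ divides $\Phi$); combining the dual-curve relation $\Psi(t,u)=v\Phi(t,u)$ from Theorem~\ref{cubic} with $\Phi\psi=\Psi\phi$ and dividing by $\Phi(t,u)$ gives $\psi(t,u)=v\phi(t,u)$, so $\ell$ is a moving bisector by Proposition~\ref{new moving bisector}. The remaining bisectors are null; since a null slope is a common zero of $\Phi$ and $\Psi$, the condition $\Psi(t,u)-v\Phi(t,u)=0$ of Theorem~\ref{cubic} holds for \emph{every} intercept $v$, so the null bisectors form the entire pencil of parallel lines provided by Theorem~\ref{new}. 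Applying the affine map $\mathcal A$ of Theorem~\ref{parabola theorem}, this null pencil goes to the pencil parallel to $Y=0$; and since affine maps preserve tangency, Theorem~\ref{envelope theorem} sends the moving bisectors onto exactly the tangent lines of $Y=X^2$. Thus $\mathcal A(\B)=\mathcal S$, and $\B$ is affinely equivalent to $\mathcal S$.

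For the converse it suffices to show $\mathcal S$ is itself a quadratic bisector field, because the affine image of a bisector field is again a bisector field and affine maps preserve parallelism, hence (via Theorem~\ref{new}) the quadratic property; so any set affinely equivalent to $\mathcal S$ inherits it. I would argue abstractly: a quadratic bisector field exists, for instance $\Bis(Q)$ for a non-parallelogram trapezoid $Q$, which is a bisector field and is quadratic by Corollary~\ref{Bis cor}(2). By the forward implication that field is affinely equivalent to $\mathcal S$, so $\mathcal S$, being the affine image of a quadratic bisector field, is a quadratic bisector field. (One could instead exhibit a concrete trapezoid with two horizontal sides and two sides tangent to $Y=X^2$ and check $\Bis(Q)=\mathcal S$ directly via Lemma~\ref{orthogonal}, but this is purely computational.) The closing uniqueness assertion is then immediate: every quadratic bisector field is affinely equivalent to $\mathcal S$, hence to every other.

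The main obstacle is the bookkeeping in the forward direction, namely verifying that $\mathcal A(\B)$ is \emph{exactly} $\mathcal S$ with no lines missing or extra. The delicate point is the null direction: one must confirm both that all lines of the null slope lie in $\B$ (which is why the full horizontal pencil appears, not just one horizontal tangent line) and that the second zero of the degree-two form $\Phi$ contributes no bisectors at all, since there $\Psi\ne0$; this matches geometrically the fact that the parabola's axis direction carries no tangent line, so $\mathcal S$ contains no vertical lines. Once these two slope-level computations are pinned down, the identification $\mathcal A(\B)=\mathcal S$ follows formally from the cited theorems.
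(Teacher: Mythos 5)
Your proposal is correct and follows essentially the same route as the paper's proof, which likewise deduces both directions from Theorem~\ref{parabola theorem} and Theorem~\ref{envelope theorem}. The only difference is that you explicitly verify what the paper leaves implicit — that a quadratic bisector field is exactly the union of its moving bisectors and the full null pencil, and that the converse rests on the existence of some quadratic bisector field (via Corollary~\ref{Bis cor}) — and these verifications are sound.
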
  

\begin{proof} Let $\B$ be a quadratic bisector field. By Theorem~\ref{parabola theorem}, $\B$ is affinely equivalent to the quadratic bisector field whose boundary is the parabola $Y=X^2$ and is such that the pencil of parallel lines of this bisector field consists of lines parallel to $Y=0$. 
By Theorem~\ref{envelope theorem}, this bisector field consists of lines that are tangent to the curve $Y-X^2=0$ or in the pencil of lines parallel to the line $Y=0$. 
The converse follows from Theorems~\ref{parabola theorem} and~\ref{envelope theorem}. 
%
%Conversely, suppose $\B$ is the set of lines tangent to the curve $Y-X^2=0$ or in the pencil of lines parallel to the line $Y=0$. Let $\B'$ be a quadratic bisector field. (Such a bisector field must exist over $\k$ by Corollary~\ref{Bis cor}.) By what we have proved in the first part of the proof,  there is an affine transformation that carries $\B'$ onto the set of lines   tangent to the curve $Y-X^2=0$ or in the pencil of lines parallel to the line $Y=0$, which is the set $\B$. As the image of a bisector field, $\B$ is itself a bisector field that is necessarily quadratic by 
%Theorem~\ref{new}. 
\end{proof}

Figure~1(c)  shows the only quadratic bisector field up to affine equivalence over the reals. 
The classification of cubic bisector fields is more complicated than that of linear and quadratic bisector fields, but at least if $\k$ is algebraically closed, the classification is definitive and essentially already done.

 \begin{theorem} \label{ACF cubic}
 Suppose $\k$ is an algebraically closed field. A set of lines in $\k^2$ is a  cubic bisector field if and only if it is affinely equivalent to the set of lines tangent to the curve
 % from Theorem~\ref{Delta theorem},  
  \begin{eqnarray}  \label{first deltoid}
 X^4+2X^2Y^2+Y^4+10X^2Y-6Y^3-X^2+12Y^2-8Y = 0. 
\end{eqnarray}

\end{theorem}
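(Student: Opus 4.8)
The plan is to read off both directions of the equivalence from two results that already contain all the geometric content, namely Theorem~\ref{Delta theorem} (the explicit quartic normal form) and Corollary~\ref{cubic tangents} (a cubic bisector field is the tangent set of its boundary); the only genuinely new observation needed is that ``being a cubic bisector field'' is preserved under affine equivalence, so that the two sides of the asserted equivalence are honestly comparable classes of line sets.

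For the forward direction I would begin with an arbitrary cubic bisector field $\B$ over the algebraically closed field $\k$. Theorem~\ref{Delta theorem} furnishes an affine transformation carrying $\B$ onto a bisector field $\B_0$ whose boundary is the quartic curve $(\ref{first deltoid})$. Because affine maps preserve midpoints and parallelism, and hence carry pencils of parallel lines to pencils of parallel lines, $\B_0$ is again a bisector field with no parallel pencils, so by Theorem~\ref{new} it is again cubic. Corollary~\ref{cubic tangents} then identifies $\B_0$ with the set of all lines tangent to its boundary, that is, with the set of lines tangent to $(\ref{first deltoid})$. Composing the two affine maps shows $\B$ is affinely equivalent to that tangent set, as desired.

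For the converse I would note that the same two results show the tangent set of $(\ref{first deltoid})$ is itself a cubic bisector field: Theorem~\ref{Delta theorem} guarantees the existence of at least one cubic bisector field $\B_0$ with boundary $(\ref{first deltoid})$, and Corollary~\ref{cubic tangents} says $\B_0$ equals the set of lines tangent to $(\ref{first deltoid})$. Hence any set of lines affinely equivalent to this tangent set is the affine image of the cubic bisector field $\B_0$; since affine maps and their inverses send bisector fields to bisector fields and preserve the number of parallel pencils, that image is again a cubic bisector field. Combining the two directions also yields, as a byproduct, that all cubic bisector fields over $\k$ are mutually affinely equivalent, consistent with Theorem~\ref{big question} (over an algebraically closed field every element is a square, so the arithmetic obstruction $\mu_1\mu_2$ vanishes).

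The main obstacle is not in this short logical assembly but in the machinery it invokes: the explicit boundary of Theorem~\ref{Delta theorem} rests on the standard-form reduction to coefficient $\mu=-1$ and center $(0,1/2)$ via Lemmas~\ref{orthogonal} and~\ref{preserve} together with the discriminant computation of Lemma~\ref{tedious}, while Corollary~\ref{cubic tangents} rests on the envelope/duality argument of Theorem~\ref{envelope theorem}. The one point I would make explicit, to keep the statement well posed, is that affine equivalence is a genuine equivalence relation on bisector fields and that cubicness---the degree of the reduced dual polynomial, equivalently the absence of any pencil of parallel lines by Theorem~\ref{new}---is an affine invariant; this is what lets me pass freely between $\B$, its normal form $\B_0$, and the tangent set of $(\ref{first deltoid})$.
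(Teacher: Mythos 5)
Your proposal is correct and follows exactly the paper's own route: the paper's proof is literally the one line ``Apply Theorem~\ref{Delta theorem} and Corollary~\ref{cubic tangents},'' and you have simply spelled out the assembly, including the (implicitly used) facts that affine maps preserve bisector fields and the cubic property via Theorem~\ref{new}. Your extra care on the converse direction and on affine invariance fills in details the paper leaves tacit, but the underlying argument is the same.
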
 

\begin{proof}  Apply Theorem~\ref{Delta theorem} and Corollary~\ref{cubic tangents}. 
%The converse can be argued along the lines of the proof of the   converse of Theorem~\ref{quad class}. 
\end{proof}

%With the aim of treating next the real closed fields and finite fields, we prove  a series of lemmas that will help with these cases and are possibly useful for other classes of fields too. 

    Over a real closed field,  
 the cubic case splits into two  cases, that of a deltoid and an ``infinite cardioid;'' see Figures~1(a) and~(b). 
 
% \begin{figure}[h] \label{diagonalspar}
 %\begin{center}
% \includegraphics[width=1\textwidth,scale=.09]{Paper5_Cardiod_Deltoid.png} 
 % \includegraphics[width=0.45\textwidth,scale=.09]{Paper5_IMAGES_Deltoid_ThickGrey.png} 
 %%%%%%%%%\includegraphics[scale=.09]{TwoPairsConeLinesAnti.jpg}
% \end{center}
% \caption{Up to affine equivalence, these are the only two cubic bisector fields over the field of real numbers. The thick gray curves indicate the boundary, and the black points are the midpoints of the bisectors. Bisectors in the same pair are shaded the same. }
%\end{figure}

\begin{theorem} \label{real case} Suppose  $\k$ is a real closed field. 
If $\B$ is  a cubic bisector field, then $\B$ is affinely equivalent to either the set of lines tangent to the deltoid in $(\ref{first deltoid})$ or the cardioid  
$$X^4-2X^2Y^2+Y^4-10X^2Y-6Y^3+X^2+12Y^2-8Y=0.$$
%or to the set of lines tangent to the deltoid 
%$$ X^4+2X^2Y^2+Y^4+10X^2Y-6Y^3-X^2+12Y^2-8Y=0.$$
%Case (a) occurs if and only if $\B$ is elliptical; case (b) if and only if $\B$ is hyperbolic and non-degenerate; case (c) if and only if $\B$ is hyperbolic and degenerate; and case (d) if and only if $\B$ 
\end{theorem}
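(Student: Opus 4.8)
The plan is to reduce the classification to the arithmetic of square classes in $\k$, using the affine-equivalence criterion already established for well-centered cubic bisector fields, and then to exhibit explicit boundary curves for the two resulting normal forms. The whole argument runs through the machinery of Sections~5--7 with essentially no new ideas, the only genuinely field-specific input being the structure of the multiplicative group of a real closed field modulo squares.

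First I would observe that over a real closed field every cubic bisector field is well centered: by Lemma~\ref{well lemma} this reduces to the existence of a root of a cubic polynomial, and every cubic over a real closed field has a root in $\k$. Consequently Theorem~\ref{big question} applies to \emph{every} cubic bisector field, and it tells us that two such fields in standard form with coefficients $\mu_1$ and $\mu_2$ are affinely equivalent precisely when $\mu_1\mu_2$ is a square in $\k$. Here $\mu\ne 0$ for a cubic field, since $\mu=0$ would force $\Phi=T^2$ to share the factor $T$ with $\Psi$ and hence drop the degree of the reduced dual polynomial below $3$; so in a real closed field each cubic field has a coefficient of a definite sign.

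Next I would invoke the defining property of a real closed field: the nonzero square classes are represented by $+1$ and $-1$, so $\mu_1\mu_2$ is a square if and only if $\mu_1$ and $\mu_2$ have the same sign. Thus the sign of $\mu$ is a complete affine invariant, and there are exactly two affine-equivalence classes of cubic bisector fields. Fixing as representatives the well-centered cubic bisector fields in standard form with center $(0,1/2)$ and coefficients $-1$ and $+1$, every cubic field with $\mu<0$ is affinely equivalent to the former (since $\mu\cdot(-1)>0$ is a square) and every cubic field with $\mu>0$ to the latter, by Theorem~\ref{big question}.

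Finally I would identify the boundaries of the two representatives and appeal to Corollary~\ref{cubic tangents}, which says a cubic bisector field is the set of lines tangent to its boundary (a property preserved under affine transformations). For $\mu=-1$, Theorem~\ref{Delta theorem} already computes the boundary to be the deltoid~(\ref{first deltoid}). For $\mu=+1$ I would substitute $h=0$, $k=1/2$, $\mu=1$ into the boundary formula of Lemma~\ref{tedious}; this yields $4\bigl(X^4-2X^2Y^2+Y^4-10X^2Y-6Y^3+X^2+12Y^2-8Y\bigr)$, so after discarding the nonzero scalar the boundary is exactly the stated cardioid. The main obstacle is simply the bookkeeping in this last substitution, together with confirming that the two representatives are genuinely inequivalent (so that both curves really occur); the latter is immediate, since $\mu<0$ and $\mu>0$ give a nonsquare product and Theorem~\ref{big question} then forbids any affine equivalence between the deltoid and cardioid classes.
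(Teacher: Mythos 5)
Your proof is correct and follows essentially the same route as the paper's: well-centeredness from real closedness via Lemma~\ref{well lemma}, reduction to standard form with center $(0,1/2)$, the sign of the coefficient as a complete invariant via Theorem~\ref{big question}, the boundary computations from Lemma~\ref{tedious} (which your substitution for $\mu=+1$ reproduces exactly), and Corollary~\ref{cubic tangents} to finish. Your added remarks — that $\mu\ne 0$ for a cubic field and that the two representatives are genuinely inequivalent — are correct but not needed beyond what the paper records.
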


%\begin{theorem} 
%Let   $\B$ be a bisector field in ${\mathbb{R}}^2$.  The bisector envelope of $\B$ is affinely  equivalent to one of four curves:
%\begin{enumerate}
%\item[{\rm (a)}] the deltoid 
%$ x^4+2x^2y^2+y^4+10x^2y-6y^3-x^2+12y^2-8y=0$,
%\item[{\rm (b)}] the cardioid 
%$ x^4-2x^2y^2+y^4-10x^2y-6y^3+x^2+12y^2-8y=0$,
%\item[{\rm (c)}] the parabola and double line $(y-x^2)x^2=0$, or 
%\item[{\rm (d)}] the pair of lines and double point $xy(x^2+y^2) =0$.
%\end{enumerate} 
%Case (a) occurs if and only if $\B$ is elliptical; case (b) if and only if $\B$ is hyperbolic and non-degenerate; case (c) if and only if $\B$ is hyperbolic and degenerate; and case (d) if and only if $\B$ 
%\end{theorem}

\begin{proof}  Since  $\k$ is real closed,  every bisector field $\B$ is well centered, and, as in the proof of Theorem~\ref{Delta theorem}, every cubic bisector field  is affinely equivalent to a bisector field in standard form with center $(0,1/2)$.  
 By Theorem~\ref{big question}, 
two  cubic bisector fields $\B_1$ and $\B_2$ in standard form  with coefficients $\mu_1$ and $\mu_2$, respectively,  are affinely equivalent if and only if $\mu_1$ and $\mu_2$ have the same sign. Thus every bisector field is affinely equivalent either to the  cubic bisector field in standard form  with coefficient $1$ and center $(0,1/2)$ or to the   cubic bisector field in standard form with coefficient $-1$ and center $(0,1/2)$.  By Lemma~\ref{tedious}, the boundary in the former case is the cardioid in the theorem; in the latter case it is the deltoid. 
The theorem follows now from Corollary~\ref{cubic tangents}. 
%To verify the converse of the theorem, observe that by what we have established, the set of lines tangent to the cardioid is a bisector field
%
%
 %in both the deltoid and cardioid cases, the bisector field $\B$ does not contain a pencil of parallel lines and hence is a cubic bisector field by Theorem~\ref{new}.  
\end{proof}

We are only able to  give partial results for other types of fields. 
%The case  in which $\k$ is the field of rational numbers is quite different from that in which $\k$ is the field of real numbers, at least regards with to the number of affine equivalence classes of  bisector fields: 

\begin{example} {\it If $\k$ is the field of rational numbers, then $k$ has infinitely many bisector fields up to affine equivalence.} This is because 
Theorem~\ref{big question lemma} implies that if for each prime integer $p$, $\B_p$ is a bisector field in standard form with coefficient $ p$, then for any two distinct primes $p_1$ and $p_2$, the bisector fields $\B_{p_1}$ and $\B_{p_2}$ are not affinely equivalent. 
\end{example} 

The case of finite fields is more complicated, and we give a few observations about such fields in the following examples.

\begin{example} \label{example 1} {\it If $\k$ is a finite field (as always, of odd characteristic), then  up to affine equivalence there are exactly two well-centered cubic bisector fields.}
Let $\B_1$ and $\B_2$ be well-centered bisector fields over $\k$. Without loss of generality, $\B_1$ and $\B_2$ are in standard form. Let $\mu_1$ and $\mu_2$ denote the coefficients of $\B_1$ and $\B_2$, respectively. Since $\B_1$ and $\B_2$ are well centered, Theorem~\ref{big question} implies $\B_1$ and $\B_2$ are affinely equivalent if and only if $\mu_1\mu_2$ is a square in $\k$. Since $\k$ is a finite field, this is the case if and only if $\mu_1$ and $\mu_2$ are both squares or neither is a square. Thus there are two equivalence classes of well-centered cubic bisector fields. 
\end{example} 

%\begin{example} {If $\k$ is the finite field with $3$ elements, then up to affine equivalence there are exactly two well-centered cubic bisector fields.}
 %the polynomial has $T^3-\mu$ as a zero by Fermat's Little Theorem and so $\B$ is well centered.

\begin{example} \label{example 2}  {\it Suppose $\k$ is a finite field, and let $\B_1$ and $\B_2$ be bisector fields in standard form with coefficients $\mu_1$ and $\mu_2$, respectively. If either {\rm (a)} $3$ is a square in $\k$ and $\mu_1$ and $\mu_2$ are not squares or {\rm (b)}  $3$ is not a square and $\mu_1$ and $\mu_2$ are squares, then 
$\B_1$ and $\B_2$ are well centered and affinely equivalent.}
%We verify   case (a); case (b) is similar. 
Let $(h_i,k_i)$ be the center of $\B_i$.  
Lemma~\ref{well lemma} implies  $\B_i$ is well centered if and only if the homogeneous polynomial $f_i(T,U) =   h_iT^3 + 3k_iT^2U + 3h_i\mu_i T U^2
    +k_i \mu_i U^3
$ has a zero in $\P^1(\k)$; if and only if $f_i$ is reducible over $\k$.  
The polynomial $f_i(T,U)$ has a zero in $\P^1(\k)$ if and only if the cubic $f_i(T,1)$ or the cubic $f_i(1,U)$ has a zero in $\k$; if and only if $f_i(T,1)$ or $f_i(1,U)$ is reducible. 
We claim first  $f_i(T,1)$ is reducible over $\k$, and to show this it suffices by a theorem of Dickson \cite[Theorem~1]{Dickson} to prove the discriminant of $f$ is not a square in~$\k$. The discriminant of $f_i(T,1)$  is $-108\mu_i(h^2\mu_i-k^2)^2$ and hence is a square if and only if $3\mu_i$ is a square. Since $\k$ is a finite field, conditions (a) or (b) guarantee $3 \mu_i$ is not a square for $i=1,2$. Thus $\B_1$ and $\B_2$ are well centered. Since in case (a) or (b), $\mu_1\mu_2$ is a square, Theorem~\ref{big question} implies $B_1$ and $\B_2$ are affinely equivalent. 
\end{example}

%Theorems~\ref{linear class} and~\ref{quad class}  show that for any choice of $\k$ there is only one linear bisector field up to affine equivalence and only one quadratic bisector field up to affine equivalence. More generally, we have 
%Theorems~\ref{ACF cubic} and~\ref{real case} raise the question:

The linear and quadratic bisector fields are classified in Theorems~\ref{linear class} and~\ref{quad class}.  
Theorems~\ref{ACF cubic} and~\ref{real case}  classify the cubic bisector fields  over algebraically closed fields and real closed fields, and the preceding examples give partial information for a potential classification over finite fields.  This raises the following question.

\begin{question} \label{question 1}
  Given a field $\k$ of characteristic $\ne 2$, 
what are  the affine equivalence classes of the cubic bisector fields over $\k$? 
%how many affine equivalence classes of cubic bisector fields are there? 
\end{question}

Up to affine equivalence we can restrict to cubic  bisector fields in standard form. These bisector fields are entirely determined by their center $(h,k)$ and coefficient $\mu$. Question~\ref{question 1} therefore can be interpreted  as seeking an equivalence relation on triples $(h,k,\mu)$ that encodes the affine equivalence of the corresponding bisector fields. 
 In light of Theorem~\ref{cubic tangents}, another way to put this question is: 

\begin{question} \label{question 2}
  Given a field $\k$ of characteristic $\ne 2$, 
what are  the affine equivalence classes of the  system of 
curves $\Delta$ in Lemma~\ref{tedious}, where $h,k,\mu \in \k$ and $\mu \ne 0$.  
\end{question}

It is straightforward to see that every cubic bisector field is affinely equivalent to a bisector field in standard form with center $(0,1/2)$ or $(1,1)$, and so   the focus in this question can be placed   on $\mu$.

\end{document}